\renewcommand\eqref[1]{(\ref{#1})} %Need with hyperref
\numberwithin{equation}{section}
\theoremstyle{plain}
\newtheorem{thm}{Theorem}[section]
\newtheorem{prop}[thm]{Proposition}
\newtheorem{cor}[thm]{Corollary}
\newtheorem{lem}[thm]{Lemma}
\theoremstyle{definition}
\newtheorem{rem}[thm]{Remark}
\renewcommand{\wp}{\mathfrak S}
\newcommand{\Rn}{\mathbb R^{n}}
\begin{document}

%   \title[Fractional integral operators on homogeneous groups]{Fractional integral operators in generalised Morrey and Campanato spaces on homogeneous groups}
   
      \title[Fractional integral operators on homogeneous groups]
   {Hardy-Littlewood, Bessel-Riesz, and fractional integral operators in anisotropic Morrey and Campanato spaces}

\author[M. Ruzhansky]{Michael Ruzhansky}
\address{
  Michael Ruzhansky:
  \endgraf
  Department of Mathematics
  \endgraf
  Imperial College London
  \endgraf
  180 Queen's Gate, London SW7 2AZ
  \endgraf
  United Kingdom
  \endgraf
  {\it E-mail address} {\rm m.ruzhansky@imperial.ac.uk}
  }
\author[D. Suragan]{Durvudkhan Suragan}
\address{
  Durvudkhan Suragan:
  \endgraf
  Institute of Mathematics and Mathematical Modelling
  \endgraf
  125 Pushkin str.
  \endgraf
  050010 Almaty
  \endgraf
  Kazakhstan
  \endgraf
  {\it E-mail address} {\rm suragan@math.kz}
  }
\author[N. Yessirkegenov]{Nurgissa Yessirkegenov}
\address{
  Nurgissa Yessirkegenov:
  \endgraf
  Institute of Mathematics and Mathematical Modelling
  \endgraf
  125 Pushkin str.
  \endgraf
  050010 Almaty
  \endgraf
  Kazakhstan
  \endgraf
  and
  \endgraf
  Department of Mathematics
  \endgraf
  Imperial College London
  \endgraf
  180 Queen's Gate, London SW7 2AZ
  \endgraf
  United Kingdom
  \endgraf
  {\it E-mail address} {\rm n.yessirkegenov15@imperial.ac.uk}
  }

\thanks{The authors were supported in parts by the EPSRC
 grant EP/K039407/1 and by the Leverhulme Grant RPG-2014-02,
 as well as by the MESRK grant 5127/GF4. No new data was collected or generated during the course of research.}

     \keywords{Fractional integral operator, generalised Morrey space, Campanato space, Hardy-Littlewood maximal operator, Bessel-Riesz operator, Olsen type inequality, homogeneous Lie group}
     \subjclass[2010]{22E30, 43A80}

     \begin{abstract} We analyse Morrey spaces, generalised Morrey spaces and Campanato spaces on homogeneous groups. The boundedness of the Hardy-Littlewood maximal operator, Bessel-Riesz operators, generalised Bessel-Riesz operators and generalised fractional integral operators in generalised Morrey spaces on homogeneous groups is shown. Moreover, we prove the boundedness of the modified version of the generalised fractional integral operator and Olsen type inequalities in Campanato spaces and generalised Morrey spaces on homogeneous groups, respectively. Our results extend results known in the isotropic Euclidean settings, however, some of them are new already in the standard Euclidean cases.
     \end{abstract}
     \maketitle

\section{Introduction}
Consider the following Bessel-Riesz operators
\begin{equation}\label{intro1}
I_{\alpha,\gamma}f(x)=\int_{\Rn}K_{\alpha,\gamma}(x-y)f(y)dy=\int_{\Rn}\frac{|x-y|^{\alpha-n}}{(1+|x-y|)^{\gamma}}f(y)dy,
\end{equation}
where $f\in L^{p}_{loc}(\Rn), p\geq 1, \gamma \geq0$ and $0<\alpha<n$. Here, $I_{\alpha,\gamma}$ and $K_{\alpha,\gamma}$ are called Bessel-Riesz operator and Bessel-Riesz kernel, respectively.
The boundedness of the Bessel-Riesz operators on Lebesgue spaces was shown by Hardy and Littlewood in \cite{HL27}, \cite{HL32} and Sobolev in \cite{Sob38}. In the case of $\Rn$, the Hardy-Littlewood maximal operator, the Riesz potential $I_{\alpha,0}=I_{\alpha}$, the generalised fractional integral operators, which are a generalised form of the Riesz potential $I_{\alpha,0}=I_{\alpha}$, Bessel-Riesz operators and Olsen type inequalities are widely analysed on Lebesgue spaces, Morrey spaces and generalised Morrey spaces (see e.g. \cite{Adams75}, \cite{CF87}, \cite{Nakai94}, \cite{EGN04}, \cite{Eridani02}, \cite{KNS99}, \cite{Nakai01}, \cite{Nakai02}, \cite{GE09}, \cite{SST12}, \cite{IGLE15} and \cite{IGE16}, as well as \cite{Bur13} for a recent survey).
For some of their functional analytic properties see also \cite{BDN13, BNC14} and references therein.

In this paper we are interested in the boundedness of the Hardy-Littlewood maximal operator, Bessel-Riesz operators, generalised Bessel-Riesz operators, generalised fractional integral operators and Olsen type inequalities in generalised Morrey spaces on homogeneous Lie groups. The obtained results give new statements already in the Euclidean setting of $\Rn$ when we are working with anisotropic differential structure. Furthermore, even in the isotropic situation in $\Rn$, one novelty of all the obtained results is also in the arbitrariness of the choice of any homogeneous quasi-norm, and some estimates are also new in the usual isotropic structure of $\Rn$ with the Euclidean norm, which we will be indicating at relevant places. 

Thus, we could have worked directly in $\Rn$ with anisotropic structure, but since the methods work equally well in the setting of Folland and Stein's homogeneous groups, we formulate all the results in such (greater) generality. In particular, it follows the general strategy initiated by their work, of distilling results of harmonic analysis depending only on the group and dilation structures: in this respect the present paper shows that the harmonic analysis on Morrey spaces largely falls into this category.

We refer to recent papers \cite{Ruzhansky-Suragan:squares}, \cite{Ruzhansky-Suragan:horizontal}, \cite{Ruzhansky-Suragan:L2-CKN}, and \cite{Ruzhansky-Suragan:layers} for discussions related to different functional inequalities with special as well as arbitrary homogeneous quasi-norms in different settings.  Morrey spaces for non-Euclidean distances find their applications in many problems, see e.g. \cite{GS15a, GS15b} and \cite{GS16}.

For the convenience of the reader let us now shortly recapture the main results of this paper. 

For the definitions of the spaces appearing in the formulations below, see 
\eqref{Morreydef} for Morrey spaces $L^{p,q}(\mathbb G)$,
\eqref{GMorrey} for generalised Morrey spaces $L^{p,\phi}(\mathbb G)$, and 
\eqref{GCampanato} for generalised Camponato spaces $\mathcal{L}^{p,\phi}(\mathbb{G})$,
as well as \eqref{HLmax} for the Hardy-Littlewood maximal operator $M$,
\eqref{BRdef} for Bessel-Riesz operators $I_{\alpha,\gamma}$,
\eqref{I_rho} for generalised Bessel-Riesz operators $I_{\rho,\gamma}$, and 
\eqref{T_rho} for generalised fractional intergral operators $T_\rho$.
Both $I_{\rho,\gamma}$ and $T_\rho$ generalise the Riesz transform and the Bessel-Riesz transform in different directions.

Thus, in this paper we show that for a homogeneous group $\mathbb{G}$ of homogeneous dimension $Q$ and any homogeneous quasi-norm $|\cdot|$ we have the following properties:
\begin{itemize}
\item If $0<\alpha<Q$ and $\gamma>0$, then $K_{\alpha, \gamma}\in L^{p_{1}}(\mathbb{G})$ for $\frac{Q}{Q+\gamma-\alpha}<p_{1}<\frac{Q}{Q-\alpha}$, and
$$\|K_{\alpha, \gamma}\|_{L^{p_{1}}(\mathbb{G})}\thicksim\left(\sum_{k\in\mathbb{Z}}\frac{(2^{k}R)^{(\alpha-Q)p_{1}+Q}}{(1+2^{k}R)^{\gamma p_{1}}}\right)^{\frac{1}{p_{1}}}$$
for any $R>0$, where $K_{\alpha, \gamma}:=\frac{|x|^{\alpha-n}}{(1+|x|)^{\gamma}}$.
\item For any $f\in L^{p,\phi}(\mathbb{G})$ and $1<p<\infty$, we have
$$
\|Mf\|_{L^{p,\phi}(\mathbb{G})}\leq C_{p}\|f\|_{L^{p,\phi}(\mathbb{G})},
$$
where generalised Morrey space $L^{p,\phi}(\mathbb{G})$ and Hardy-Littlewood maximal operator $Mf$ are defined in \eqref{GMorrey} and \eqref{HLmax}, respestively.
\item Let $\gamma>0$ and $0<\alpha<Q$. If $\phi(r)\leq Cr^{\beta}$ for every $r>0, \beta<-\alpha, 1<p<\infty$, and $\frac{Q}{Q+\gamma-\alpha}<p_{1}<\frac{Q}{Q-\alpha}$, then for all $f\in L^{p, \phi}(\mathbb{G})$ we have
$$\|I_{\alpha,\gamma}f\|_{L^{q, \psi}(\mathbb{G})}\leq C_{p, \phi, Q}\|K_{\alpha,\gamma}\|_{L^{p_{1}}(\mathbb{G})}\|f\|_{L^{p, \phi}(\mathbb{G})},
$$
where $q=\frac{\beta p_{1}^{'} p}{\beta p_{1}^{'}+Q}$ and $\psi(r)=\phi(r)^{p/q}$. The Bessel-Riesz operator $I_{\alpha,\gamma}$ on a homogenous group is defined in \eqref{BRdef}.
\item Let $\gamma>0$ and $0<\alpha<Q$. If $\phi(r)\leq Cr^{\beta}$ for every $r>0, \beta<-\alpha, $ $\frac{Q}{Q+\gamma-\alpha}<p_{2}\leq p_{1}<\frac{Q}{Q-\alpha}$ and $p_{2}\geq1$, then for all $f\in L^{p, \phi}(\mathbb{G})$ we have
$$\|I_{\alpha,\gamma}f\|_{L^{q, \psi}(\mathbb{G})}\leq C_{p, \phi, Q}\|K_{\alpha,\gamma}\|_{L^{p_{2},p_{1}}(\mathbb{G})}\|f\|_{L^{p, \phi}(\mathbb{G})},
$$
where $1<p<\infty, q=\frac{\beta p_{1}^{'} p}{\beta p_{1}^{'}+Q}, \psi(r)=\phi(r)^{p/q}$.
\item Let $\omega:\mathbb{R}^{+}\rightarrow\mathbb{R}^{+}$ satisfy the doubling condition and assume that $\omega(r)\leq Cr^{-\alpha}$ for every $r>0$, so that $K_{\alpha,\gamma}\in L^{p_{2}, \omega}(\mathbb{G})$ for $\frac{Q}{Q+\gamma-\alpha}<p_{2}<\frac{Q}{Q-\alpha}$ and $p_{2}\geq1$, where $0<\alpha<Q$ and $\gamma>0$. If $\phi(r)\leq Cr^{\beta}$ for every $r>0$, where $\beta<-\alpha<-Q-\beta$, then for all $f\in L^{p, \phi}(\mathbb{G})$ we have
$$\|I_{\alpha,\gamma}f\|_{L^{q, \psi}(\mathbb{G})}\leq C_{p, \phi, Q}\|K_{\alpha,\gamma}\|_{L^{p_{2},\omega}(\mathbb{G})}\|f\|_{L^{p, \phi}(\mathbb{G})},
$$
where $1< p<\infty, q=\frac{\beta p}{\beta+Q-\alpha}$ and $ \psi(r)=\phi(r)^{p/q}$.
\item Let $\gamma>0$ and let $\rho$ and $\phi$ satisfy the doubling condition \eqref{Pre02}. 
Let $1<p<q<\infty$.
Let $\phi$ be surjective and satisfy
$$\int_{r}^{\infty}\frac{\phi(t)^{p}}{t}dt\leq C_{1}\phi(r)^{p}, 
$$
and
$$\phi(r)\int_{0}^{r}\frac{\rho(t)}{t^{\gamma-Q+1}}dt+\int_{r}^{\infty}
\frac{\rho(t)\phi(t)}{t^{\gamma-Q+1}}dt\leq C_{2} \phi(r)^{p/q},
$$
for all $r>0$. Then we have
$$
\|I_{\rho, \gamma}f\|_{L^{q,\phi^{p/q}}(\mathbb{G})}\leq C_{p, q, \phi, Q}\|f\|_{L^{p,\phi}(\mathbb{G})},
$$
where the generalised Bessel-Riesz operator $I_{\rho, \gamma}$ is defined in \eqref{I_rho}.
This result is new already in the standard setting of $\Rn$.
\item Let $\rho$ and $\phi$ satisfy the doubling condition \eqref{Pre02}. Let $\gamma>0$, and assume that $\phi$ is surjective and satisfies \eqref{I_Gfracopthm1}-\eqref{I_Gfracopthm2}. Then we have
$$\|W\cdot I_{\rho, \gamma}f\|_{L^{p,\phi}(\mathbb{G})}\leq C_{p, \phi, Q}\|W\|_{L^{p_{2},\phi^{p/p_{2}}}(\mathbb{G})}\|f\|_{L^{p,\phi}(\mathbb{G})}, \quad 1<p<p_{2}<\infty,
$$
provided that $W\in L^{p_{2},\phi^{p/p_{2}}}(\mathbb{G})$.
This result is new even in the Euclidean cases.
\item Let $\rho$ and $\phi$ satisfy the doubling condition \eqref{Pre02}. 
Let $1<p<q<\infty$.
Let $\phi$ be surjective and satisfy
$$\int_{r}^{\infty}\frac{\phi(t)^{p}}{t}dt\leq C_{1}\phi(r)^{p}, 
$$
and
$$\phi(r)\int_{0}^{r}\frac{\rho(t)}{t}dt+\int_{r}^{\infty}\frac{\rho(t)\phi(t)}{t}dt\leq C_{2} \phi(r)^{p/q},
$$
for all $r>0$. Then we have
$$
\|T_{\rho}f\|_{L^{q,\phi^{p/q}}(\mathbb{G})}\leq C_{p, q, \phi, Q}\|f\|_{L^{p,\phi}(\mathbb{G})}, 
$$
where the generalised fractional integral operator $T_{\rho}$ is defined in \eqref{T_rho}.
\item Let $\rho$ and $\phi$ satisfy the doubling condition \eqref{Pre02}. Let $\phi$ be surjective and satisfy \eqref{Gfracopthm1}-\eqref{Gfracopthm2}. Then we have
$$\|W\cdot T_{\rho}f\|_{L^{p,\phi}(\mathbb{G})}\leq C_{p, \phi, Q}\|W\|_{L^{p_{2},\phi^{p/p_{2}}}(\mathbb{G})}\|f\|_{L^{p,\phi}(\mathbb{G})}, \quad 1<p<p_{2}<\infty,
$$
provided that $W\in L^{p_{2},\phi^{p/p_{2}}}(\mathbb{G})$.
\item Let $\omega:\mathbb{R}^{+}\rightarrow\mathbb{R}^{+}$ satisfy the doubling condition and assume that $\omega(r)\leq Cr^{-\alpha}$ for every $r>0$, so that $K_{\alpha,\gamma}\in L^{p_{2}, \omega}(\mathbb{G})$ for $\frac{Q}{Q+\gamma-\alpha}<p_{2}<\frac{Q}{Q-\alpha}$ and $p_{2}\geq1$, where $0<\alpha<Q$, $1<p<\infty, q=\frac{\beta p}{\beta+Q-\alpha}$ and $\gamma>0$. If $\phi(r)\leq Cr^{\beta}$ for every $r>0$, where $\beta<-\alpha<-Q-\beta$, then we have
$$\|W\cdot I_{\alpha,\gamma}f\|_{L^{p,\phi}(\mathbb{G})}\leq C_{p, \phi, Q}\|W\|_{L^{p_{2},\phi^{p/p_{2}}}(\mathbb{G})}\|f\|_{L^{p,\phi}(\mathbb{G})},
$$
provided that $W\in L^{p_{2},\phi^{p/p_{2}}}(\mathbb{G})$, where $\frac{1}{p_{2}}=\frac{1}{p}-\frac{1}{q}$. This result is new already in the Euclidean setting of $\mathbb R^n$.
\item Let $\rho$ satisfy \eqref{Pre01}, \eqref{Pre02}, \eqref{Pre03}, \eqref{Pre04}, and let $\phi$ satisfy the doubling condition \eqref{Pre02} and $\int_{1}^{\infty}\frac{\phi(t)}{t}dt<\infty$. If
$$\int_{r}^{\infty}\frac{\phi(t)}{t}dt\int_{0}^{r}\frac{\rho(t)}{t}dt+r\int_{r}^{\infty}\frac{\rho(t)\phi(t)}{t^{2}}dt\leq C_{3} \psi(r) \;\;{\rm for \;\;all} \;\;r>0,
$$
then we have
$$
\|\widetilde{T}_{\rho}f\|_{\mathcal{L}^{p,\psi}(\mathbb{G})}\leq C_{p, \phi, Q}\|f\|_{\mathcal{L}^{p,\phi}(\mathbb{G})}, \;\;1<p<\infty,
$$
where the generalised Campanato space $\mathcal{L}^{p,\psi}(\mathbb{G})$ and operator $\widetilde{T}_{\rho}$ are defined in \eqref{GCampanato} and \eqref{ModT}, respectively.
\end{itemize}
This paper is structured as follows. In Section \ref{SEC:prelim} we briefly recall the concepts of homogeneous groups and fix the notation. The boundedness of the Hardy-Littlewood maximal operator and Bessel-Riesz operators in generalised Morrey spaces on homogeneous groups is proved in Section \ref{SEC:Hardy-Littlewood} and in Section \ref{SEC:Bessel-Riesz}, respectively. In Section \ref{SEC:I_Gfracop} we prove the boundedness of the generalised Bessel-Riesz operators and Olsen type inequality for these operators in generalised Morrey spaces on homogeneous groups. The boundedness of the generalised fractional integral operators and Olsen type inequality for these operators in generalised Morrey spaces on homogeneous groups are proved in Section \ref{SEC:Gfracop}. Finally, in Section \ref{SEC:Campanato} we investigate the boundedness of the modified version of the generalised fractional integral operator in Campanato spaces on homogeneous groups.
\section{Preliminaries}
\label{SEC:prelim}
A connected simply connected Lie group $\mathbb G$ is called a {\em homogeneous group} if
its Lie algebra $\mathfrak{g}$ is equipped with a family of dilations:
$$D_{\lambda}={\rm Exp}(A \,{\rm ln}\lambda)=\sum_{k=0}^{\infty}
\frac{1}{k!}({\rm ln}(\lambda) A)^{k},$$
where $A$ is a diagonalisable positive linear operator on $\mathfrak{g}$,
and each $D_{\lambda}$ is a morphism of $\mathfrak{g}$,
that is, 
$$\forall X,Y\in \mathfrak{g},\, \lambda>0,\;
[D_{\lambda}X, D_{\lambda}Y]=D_{\lambda}[X,Y].$$

The exponential mapping $\exp_{\mathbb G}:\mathfrak g\to\mathbb G$ is a global diffeomorphism and gives the dilation structure, which is denoted by $D_{\lambda}x$ or just by $\lambda x$, on $\mathbb G$.

Then we have
\begin{equation}
|D_{\lambda}(S)|=\lambda^{Q}|S| \quad {\rm and}\quad \int_{\mathbb{G}}f(\lambda x)
dx=\lambda^{-Q}\int_{\mathbb{G}}f(x)dx,
\end{equation}
where $dx$ is the Haar measure on $\mathbb{G}$, $|S|$ is the volume of a measurable set $S\subset \mathbb{G}$ and $Q := {\rm Tr}\,A$ is the homogeneous dimension of $\mathbb G$. Recall that the Haar measure on a homogeneous group $\mathbb{G}$ is the standard Lebesgue measure for $\Rn$ (see e.g. \cite[Proposition 1.6.6]{FR}).

Let $|\cdot|$ be a homogeneous quasi-norm on $\mathbb G$.
We will denote the quasi-ball centred at $x\in\mathbb{G}$ with radius $R > 0$ by
$$B(x,R):=\{y\in \mathbb{G}: |x^{-1}y|<R\}$$
and we will also use the notation 
$$B^{c}(x,R):=\{y\in \mathbb{G}: |x^{-1}y|\geq R\}.$$
The proof of the following important polar decomposition on homogeneous Lie groups was given
by Folland and Stein \cite{FS-Hardy}, which can be also found in
\cite[Section 3.1.7]{FR}:
there is a (unique)
positive Borel measure $\sigma$ on the
unit sphere
\begin{equation}\label{EQ:sphere}
\wp:=\{x\in \mathbb{G}:\,|x|=1\},
\end{equation}
so that for any $f\in L^{1}(\mathbb{G})$, one has
\begin{equation}\label{EQ:polar}
\int_{\mathbb{G}}f(x)dx=\int_{0}^{\infty}
\int_{\wp}f(ry)r^{Q-1}d\sigma(y)dr.
\end{equation}
Now, for any $f\in L^{p}_{loc}(\mathbb{G}), \;p\geq1$ and $\gamma\geq0, \;0<\alpha<Q$, we shall define the Bessel-Riesz operators on homogeneous groups by
\begin{equation}\label{BRdef}
I_{\alpha,\gamma}f(x):=\int_{\mathbb{G}}K_{\alpha,\gamma}(xy^{-1})f(y)dy=
\int_{\mathbb{G}}\frac{|xy^{-1}|^{\alpha-Q}}{(1+|xy^{-1}|)^{\gamma}}f(y)dy,
\end{equation}
where $|\cdot|$ is any homogeneous quasi-norm. Here, $K_{\alpha,\gamma}$ is the Bessel-Riesz kernel. Hereafter, $C$, $C_{i}$, $C_{p}$, $C_{p,\phi, Q}$ and $C_{p, q, \phi, Q}$ are positive constants, which are not necessarily the same from line to line.

Let us recall the following result, which will be used in the sequel.

\begin{lem}[\cite{IGLE15}] \label{Pre0} If $b>a>0$ then $\underset{k\in\mathbb{Z}}{\sum}\frac{(u^{k}R)^{a}}{(1+u^{k}R)^{b}}<\infty,$ for every $u>1$ and $R>0$.
\end{lem}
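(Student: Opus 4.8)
The plan is to exploit the geometric spacing of the sampling points $t_k := u^k R$ and to control the summand by the elementary comparison $1+t \geq \max(1,t)$, valid for all $t>0$. The key observation is that the two-sided series splits naturally according to whether $t_k$ is large or small: as $k\to+\infty$ the terms behave like $t_k^{a-b}$, while as $k\to-\infty$ they behave like $t_k^{a}$, and the hypothesis $b>a>0$ is precisely what forces geometric decay in both regimes. Since $u>1$ and $R>0$, there is a unique integer $k_0$ with $u^{k_0}R \geq 1 > u^{k_0-1}R$, and I would split $\mathbb{Z} = \{k \geq k_0\} \cup \{k < k_0\}$, i.e. according to whether $t_k\geq 1$ or $t_k<1$.

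For the tail indices $k \geq k_0$ I would use $1+t_k \geq t_k$, which gives
$$\frac{t_k^a}{(1+t_k)^b} \leq t_k^{a-b} = R^{a-b}\,(u^{a-b})^{k}.$$
Because $a-b<0$ and $u>1$, we have $u^{a-b}<1$, so $\sum_{k\geq k_0}(u^{a-b})^{k}$ is a convergent geometric series and the tail contribution is finite.

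For the head indices $k < k_0$ I would instead use $1+t_k \geq 1$, which gives
$$\frac{t_k^a}{(1+t_k)^b} \leq t_k^{a} = R^{a}\,(u^{a})^{k}.$$
Here $u^a>1$, but the summation runs toward $k\to-\infty$; writing $k = k_0 - j$ with $j\geq 1$ converts this into $R^a (u^a)^{k_0}\sum_{j\geq 1}(u^{-a})^{j}$, which converges since $a>0$ gives $u^{-a}<1$. Adding the two finite bounds yields convergence of the full series.

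The only point requiring any care is the bookkeeping of the two geometric series, in particular getting the direction of summation right in the regime $k<k_0$; there is no substantive analytic obstacle here, since the single hypothesis $b>a>0$ simultaneously guarantees decay at both ends of the index range.
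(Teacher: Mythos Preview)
Your argument is correct: splitting the two-sided series at the index where $u^{k}R$ crosses $1$, and then bounding $(1+t)^{-b}$ by $t^{-b}$ for $t\geq 1$ and by $1$ for $t<1$, gives two geometric series whose common ratios $u^{a-b}$ and $u^{-a}$ are both strictly less than $1$ precisely because $b>a>0$. The bookkeeping you flag is routine and you have handled it properly.

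Note that the paper does not supply its own proof of this lemma; it simply quotes the statement from \cite{IGLE15}. So there is no ``paper's approach'' to compare against, and your self-contained argument is the standard one.
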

We now calculate the $L^{p}$-norms of the Bessel-Riesz kernel.

\begin{thm}\label{Pre1} Let $\mathbb{G}$ be a homogeneous group
of homogeneous dimension $Q$. Let $|\cdot|$ be a homogeneous quasi-norm. Let $K_{\alpha, \gamma}(x)=\frac{|x|^{\alpha-Q}}{(1+|x|)^{\gamma}}$. If $0<\alpha<Q$ and $\gamma>0$ then $K_{\alpha, \gamma}\in L^{p_{1}}(\mathbb{G})$ and
$$\|K_{\alpha, \gamma}\|_{L^{p_{1}}(\mathbb{G})}\thicksim\left(\sum_{k\in\mathbb{Z}}\frac{(2^{k}R)^{(\alpha-Q)p_{1}+Q}}{(1+2^{k}R)^{\gamma p_{1}}}\right)^{\frac{1}{p_{1}}},$$
for $\frac{Q}{Q+\gamma-\alpha}<p_{1}<\frac{Q}{Q-\alpha}$.
\end{thm}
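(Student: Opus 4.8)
The plan is to compute the $L^{p_{1}}$-norm of $K_{\alpha,\gamma}$ directly through a dyadic annular decomposition of $\mathbb{G}$, on each piece of which the kernel is comparable to a single constant, and then to read off both the equivalence with the series and its finiteness from Lemma \ref{Pre0}.

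First I would raise the norm to the power $p_{1}$, so that
$$\|K_{\alpha,\gamma}\|_{L^{p_{1}}(\mathbb{G})}^{p_{1}}=\int_{\mathbb{G}}\frac{|x|^{(\alpha-Q)p_{1}}}{(1+|x|)^{\gamma p_{1}}}\,dx.$$
Fixing an arbitrary $R>0$, I split $\mathbb{G}=\bigcup_{k\in\mathbb{Z}}A_{k}$ into the dyadic annuli $A_{k}:=\{x\in\mathbb{G}:2^{k}R\leq|x|<2^{k+1}R\}=B(0,2^{k+1}R)\setminus B(0,2^{k}R)$, and estimate the integral over each $A_{k}$ separately. Since $\alpha-Q<0$ and $\gamma>0$, both factors are monotone in $|x|$ and hence comparable, uniformly on $A_{k}$, to their values at $|x|=2^{k}R$: one has $|x|^{(\alpha-Q)p_{1}}\thicksim(2^{k}R)^{(\alpha-Q)p_{1}}$ and $(1+|x|)^{-\gamma p_{1}}\thicksim(1+2^{k}R)^{-\gamma p_{1}}$, with comparison constants depending only on $\alpha,Q,\gamma,p_{1}$ and not on $k$ or $R$. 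The measure of the annulus follows from the dilation identity $|B(0,r)|=r^{Q}|B(0,1)|$ (a consequence of $|D_{\lambda}S|=\lambda^{Q}|S|$), which gives $|A_{k}|=(2^{Q}-1)|B(0,1)|(2^{k}R)^{Q}\thicksim(2^{k}R)^{Q}$. Combining these, I obtain
$$\int_{A_{k}}\frac{|x|^{(\alpha-Q)p_{1}}}{(1+|x|)^{\gamma p_{1}}}\,dx\thicksim\frac{(2^{k}R)^{(\alpha-Q)p_{1}+Q}}{(1+2^{k}R)^{\gamma p_{1}}}.$$

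Summing over $k\in\mathbb{Z}$ and taking $p_{1}$-th roots then yields the asserted equivalence. To conclude that $K_{\alpha,\gamma}\in L^{p_{1}}(\mathbb{G})$, that is, that the series is finite, I would invoke Lemma \ref{Pre0} with $u=2$, $a=(\alpha-Q)p_{1}+Q$ and $b=\gamma p_{1}$: the requirement $b>a>0$ is equivalent to $(Q-\alpha)p_{1}<Q$ together with $(Q+\gamma-\alpha)p_{1}>Q$, which are precisely the two hypotheses $\frac{Q}{Q+\gamma-\alpha}<p_{1}<\frac{Q}{Q-\alpha}$.

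The only genuinely delicate point is that the comparison constants in the annular estimates must be uniform in both $k$ and $R$, so that the stated equivalence holds for \emph{every} $R>0$ with a single pair of constants. Since the left-hand side plainly does not depend on $R$, this uniformity forces the series on the right to be essentially $R$-independent, and it is what replaces a direct evaluation of the integral; it follows from the scale invariance of the dyadic comparison, the constants arising only as fixed powers of $2$.
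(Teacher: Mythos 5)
Your proof is correct, and it follows the same skeleton as the paper's: a dyadic decomposition at an arbitrary scale $R>0$, a two-sided estimate of the integral over each dyadic piece by $\frac{(2^{k}R)^{(\alpha-Q)p_{1}+Q}}{(1+2^{k}R)^{\gamma p_{1}}}$ with constants uniform in $k$ and $R$, and then Lemma \ref{Pre0} with exactly the same parameters $u=2$, $a=(\alpha-Q)p_{1}+Q$, $b=\gamma p_{1}$ to convert the hypotheses $\frac{Q}{Q+\gamma-\alpha}<p_{1}<\frac{Q}{Q-\alpha}$ into finiteness of the series. The one place you genuinely deviate is the per-annulus estimate: the paper passes to polar coordinates via the Folland--Stein decomposition \eqref{EQ:polar} and integrates $r^{(\alpha-Q)p_{1}+Q-1}$ exactly over each interval $[2^{k}R,2^{k+1}R)$ (which silently uses $(\alpha-Q)p_{1}+Q>0$, i.e.\ $p_{1}<\frac{Q}{Q-\alpha}$, to write the explicit antiderivative constant), whereas you avoid the polar measure $\sigma$ altogether and instead freeze both factors of the kernel on the annulus $A_{k}$ and compute $|A_{k}|=(2^{Q}-1)|B(0,1)|(2^{k}R)^{Q}$ from the dilation identity $|D_{\lambda}S|=\lambda^{Q}|S|$ together with $|D_{\lambda}x|=\lambda|x|$. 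Your route is slightly more elementary in that it needs only the homogeneity of the Haar measure rather than the full polar decomposition, at the cost of losing the explicit comparison constants the paper's exact radial integration produces; both are equally rigorous, and your closing remark about uniformity of the constants in $k$ and $R$ (powers of $2$ only) is precisely the point that makes the equivalence valid for every $R>0$ simultaneously.
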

\begin{proof}[Proof of Theorem \ref{Pre1}]
Introducing polar coordinates $(r,y)=(|x|, \frac{x}{\mid x\mid})\in (0,\infty)\times\wp$ on $\mathbb{G}$, where $\wp$ is the sphere as in \eqref{EQ:sphere}, and using \eqref{EQ:polar} for any $R>0$, we have
$$\int_{\mathbb{G}}|K_{\alpha, \gamma}(x)|^{p_{1}}dx=\int_{\mathbb{G}}\frac{|x|^{(\alpha-Q)p_{1}}}{(1+|x|)^{\gamma p_{1}}}dx$$
$$=\int_{0}^{\infty}\int_{\wp}\frac{r^{(\alpha-Q)p_{1}+Q-1}}{(1+r)^{\gamma p_{1}}}d\sigma(y)dr=
|\sigma|\sum_{k\in\mathbb{Z}}\int_{2^{k}R\leq r<2^{k+1}R}\frac{r^{(\alpha-Q)p_{1}+Q-1}}{(1+r)^{\gamma p_{1}}}dr,$$
where $|\sigma|$ is the $Q-1$ dimensional surface measure of the unit sphere.

Then it follows that
$$\int_{\mathbb{G}}|K_{\alpha, \gamma}(x)|^{p_{1}}dx\leq|\sigma|\sum_{k\in\mathbb{Z}}\frac{1}{(1+2^{k}R)^{\gamma p_{1}}}
\int_{2^{k}R\leq r<2^{k+1}R}r^{(\alpha-Q)p_{1}+Q-1}dr$$$$=\frac{|\sigma|(2^{(\alpha-Q)p_{1}+Q}-1)}{(\alpha-Q)p_{1}+Q}
\sum_{k\in\mathbb{Z}}\frac{(2^{k}R)^{(\alpha-Q)p_{1}+Q}}{(1+2^{k}R)^{\gamma p_{1}}}.$$
On the other hand, we obtain
$$\int_{\mathbb{G}}|K_{\alpha, \gamma}(x)|^{p_{1}}dx\geq\frac{|\sigma|}{2^{\gamma p_{1}}}\sum_{k\in\mathbb{Z}}
\frac{1}{(1+2^{k}R)^{\gamma p_{1}}}\int_{2^{k}R\leq r<2^{k+1}R}r^{(\alpha-Q)p_{1}+Q-1}dr$$
$$=\frac{|\sigma|(2^{(\alpha-Q)p_{1}+Q}-1)}{2^{\gamma p_{1}}((\alpha-Q)p_{1}+Q)}\sum_{k\in\mathbb{Z}}\frac{(2^{k}R)^{(\alpha-Q)p_{1}+Q}}{(1+2^{k}R)^{\gamma p_{1}}}.$$
Therefore, for every $R>0$ we arrive at
$$\int_{\mathbb{G}}|K_{\alpha, \gamma}(x)|^{p_{1}}dx\thicksim\sum_{k\in\mathbb{Z}}\frac{(2^{k}R)^{(\alpha-Q)p_{1}+Q}}{(1+2^{k}R)^{\gamma p_{1}}}.$$
For $p_{1}\in\left(\frac{Q}{Q+\gamma-\alpha},\frac{Q}{Q-\alpha}\right)$ using Lemma \ref{Pre0} with $u=2, a=(\alpha-Q)p_{1}+Q, b=\gamma p_{1}$, we obtain $\sum_{k\in\mathbb{Z}}\frac{(2^{k}R)^{(\alpha-Q)p_{1}+Q}}{(1+2^{k}R)^{\gamma p_{1}}}<\infty$ which implies $K_{\alpha, \gamma}\in L^{p_{1}}(\mathbb{G})$.
\end{proof}
The following is well-known on homogeneous groups, see e.g. \cite[Proposition 1.5.2]{FR}.

\begin{prop}[Young's inequality] \label{Young}  Let $\mathbb{G}$ be a homogeneous group. Suppose
$1\leq p,q,p_{1}\leq\infty$ and $\frac{1}{q}+1=\frac{1}{p}+\frac{1}{p_{1}}$.
If $f\in L^{p}(\mathbb{G})$ and $g\in L^{p_{1}}(\mathbb{G})$ then
$$\|g\ast f\|_{L^{q}(\mathbb{G})}\leq \|f\|_{L^{p}(\mathbb{G})}\|g\|_{L^{p_{1}}(\mathbb{G})}.$$
\end{prop}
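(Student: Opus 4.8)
The plan is to reduce Young's inequality to two facts about the homogeneous group: that $\mathbb{G}$ is unimodular with Haar measure equal to Lebesgue measure, so that for every integrable $h$ and every fixed $x$ one has the invariances $\int_{\mathbb{G}}h(xy^{-1})\,dy=\int_{\mathbb{G}}h(xy^{-1})\,dx=\int_{\mathbb{G}}h(z)\,dz$; and the three-exponent H\"older inequality. Recall that here $g\ast f(x)=\int_{\mathbb{G}}g(xy^{-1})f(y)\,dy$, which is the convention making $I_{\alpha,\gamma}f=K_{\alpha,\gamma}\ast f$ in \eqref{BRdef}. Since $|g\ast f|\le |g|\ast|f|$ and the $L^{p}$-norms are unchanged under taking absolute values, I may assume $f,g\ge 0$ throughout.

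First I would dispose of the boundary cases. If $q=\infty$ the hypothesis forces $\tfrac1p+\tfrac1{p_1}=1$, and the bound is immediate from the ordinary H\"older inequality applied to the $y$-integral, together with $\bigl(\int_{\mathbb{G}}g(xy^{-1})^{p_1}\,dy\bigr)^{1/p_1}=\|g\|_{L^{p_1}}$. If $p=1$ (so $q=p_1$) or $p_1=1$ (so $q=p$), the bound follows from Minkowski's integral inequality together with the left/right invariance of the norm; for instance when $p_1=1$ one writes $g\ast f(x)=\int_{\mathbb{G}}g(z)f(z^{-1}x)\,dz$ and estimates $\|g\ast f\|_{L^{p}}\le\int_{\mathbb{G}}g(z)\|f(z^{-1}\cdot)\|_{L^{p}}\,dz=\|g\|_{L^{1}}\|f\|_{L^{p}}$.

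For the main case $1<p,p_1,q<\infty$ (where the constraint automatically gives $q>p$ and $q>p_1$), I would split the integrand as
$$g(xy^{-1})f(y)=\bigl[g(xy^{-1})^{p_1}f(y)^{p}\bigr]^{1/q}\cdot g(xy^{-1})^{1-p_1/q}\cdot f(y)^{1-p/q}$$
and apply H\"older in $y$ with the three exponents $q$, $s:=\tfrac{p_1q}{q-p_1}$ and $t:=\tfrac{pq}{q-p}$, whose reciprocals sum to $\tfrac1p+\tfrac1{p_1}-\tfrac1q=1$ by the hypothesis $\tfrac1q+1=\tfrac1p+\tfrac1{p_1}$. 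Using the $y$-invariance to compute $\int_{\mathbb{G}}g(xy^{-1})^{p_1}\,dy=\|g\|_{L^{p_1}}^{p_1}$, this produces the pointwise estimate
$$g\ast f(x)\le\Bigl(\int_{\mathbb{G}}g(xy^{-1})^{p_1}f(y)^{p}\,dy\Bigr)^{1/q}\|g\|_{L^{p_1}}^{p_1/s}\|f\|_{L^{p}}^{p/t}.$$
Raising to the power $q$, integrating in $x$, and applying Tonelli's theorem with the $x$-invariance $\int_{\mathbb{G}}g(xy^{-1})^{p_1}\,dx=\|g\|_{L^{p_1}}^{p_1}$ gives $\|g\ast f\|_{L^{q}}^{q}\le\|g\|_{L^{p_1}}^{p_1q/s+p_1}\|f\|_{L^{p}}^{pq/t+p}$, after which a one-line check that $p_1q/s+p_1=q$ and $pq/t+p=q$ yields the claim.

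The only genuine subtlety, and the step I would verify most carefully, is the repeated appeal to the invariances $\int_{\mathbb{G}}h(xy^{-1})\,dx=\int_{\mathbb{G}}h(z)\,dz$ and the analogue in $y$: these are exactly where the group and dilation structure enters, and they hold because a homogeneous (hence nilpotent) group is unimodular and inversion preserves its Haar measure. Everything else is bookkeeping of exponents, so I do not expect any serious obstacle; this is also why the statement can simply be quoted from \cite[Proposition 1.5.2]{FR}.
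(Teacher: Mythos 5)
Your proof is correct, but it is worth noting that the paper does not prove this statement at all: Proposition \ref{Young} is simply quoted from Fischer--Ruzhansky \cite[Proposition 1.5.2]{FR} as a known fact, so any complete argument is necessarily ``a different route.'' What you supply is the classical three-exponent H\"older interpolation proof, and the bookkeeping all checks: with $s=\frac{p_1q}{q-p_1}$ and $t=\frac{pq}{q-p}$ one has $\frac1q+\frac1s+\frac1t=\frac1p+\frac1{p_1}-\frac1q=1$, the middle and last factors satisfy $(1-p_1/q)s=p_1$ and $(1-p/q)t=p$, and the final exponent identities $qp_1/s+p_1=q$, $qp/t+p=q$ hold; moreover you correctly observe that in the interior case the constraint forces $q>p$ and $q>p_1$, so $s,t$ are genuinely finite, the degenerate situations $q=p$ and $q=p_1$ being exactly the endpoint cases $p_1=1$ and $p=1$ that you dispose of separately via Minkowski's integral inequality, while $q=\infty$ reduces to plain H\"older. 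You also correctly identify the only non-Euclidean input: the substitutions $y\mapsto xy^{-1}$ (inversion plus left translation) and $x\mapsto xy^{-1}$ (right translation) preserve Haar measure because a homogeneous group is nilpotent, hence unimodular, with Haar measure equal to Lebesgue measure in exponential coordinates --- which the paper itself records via \cite[Proposition 1.6.6]{FR}. What your argument buys over the citation is transparency about generality: the constant is $1$, the homogeneous quasi-norm plays no role whatsoever in the convolution inequality, and the proof works verbatim on any unimodular group; what the citation buys is brevity, and it also signals that the result sits inside a systematic framework (one could alternatively interpolate \`a la Riesz--Thorin between your two endpoint cases, which is essentially how such statements are packaged in \cite{FR}). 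One stylistic caution: since $\mathbb{G}$ is noncommutative, $g\ast f\neq f\ast g$ in general, so your explicit check that the convention $g\ast f(x)=\int_{\mathbb{G}}g(xy^{-1})f(y)\,dy$ is the one making $I_{\alpha,\gamma}f=K_{\alpha,\gamma}\ast f$ in \eqref{BRdef} is not pedantry but a necessary step, and you handled it correctly.
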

In view of Proposition \ref{Young} and taking into account the definition of Bessel-Riesz operator \ref{BRdef}, we immediately get the following Corollary \ref{Pre2}:
\begin{cor} \label{Pre2} Let $\mathbb{G}$ be a homogeneous group
of homogeneous dimension $Q$. Let $|\cdot|$ be a homogeneous quasi-norm. Then for $0<\alpha<Q, \gamma>0$, we have
$$\|I_{\alpha,\gamma}f\|_{L^{q}(\mathbb{G})}\leq \|K_{\alpha,\gamma}\|_{L^{p_{1}}(\mathbb{G})}\|f\|_{L^{p}(\mathbb{G})}$$
for every $f\in L^{p}(\mathbb{G})$ where $1\leq p,q,p_{1}\leq\infty$, $\frac{1}{q}+1=\frac{1}{p}+\frac{1}{p_{1}}$ and $\frac{Q}{Q+\gamma-\alpha}<p_{1}<\frac{Q}{Q-\alpha}.$
\end{cor}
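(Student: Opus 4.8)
The plan is to recognise that the entire content of this corollary is already packaged into the two preceding results, so that the proof reduces to matching up hypotheses. First I would observe that the defining formula \eqref{BRdef} exhibits $I_{\alpha,\gamma}f$ as a convolution: writing $g=K_{\alpha,\gamma}$, we have exactly
$$
I_{\alpha,\gamma}f(x)=\int_{\mathbb{G}}K_{\alpha,\gamma}(xy^{-1})f(y)\,dy=(K_{\alpha,\gamma}\ast f)(x),
$$
which is the convolution appearing in Proposition \ref{Young} with the roles $g=K_{\alpha,\gamma}$ and the given $f$.

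Next I would verify that the kernel lies in the correct Lebesgue space. The hypotheses $0<\alpha<Q$, $\gamma>0$, and $\frac{Q}{Q+\gamma-\alpha}<p_{1}<\frac{Q}{Q-\alpha}$ are precisely those of Theorem \ref{Pre1}, which therefore guarantees $K_{\alpha,\gamma}\in L^{p_{1}}(\mathbb{G})$ with a finite norm. Thus $g=K_{\alpha,\gamma}$ is an admissible input for Young's inequality.

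The final step is to apply Proposition \ref{Young} directly. With $1\leq p,q,p_{1}\leq\infty$ and the exponent relation $\frac{1}{q}+1=\frac{1}{p}+\frac{1}{p_{1}}$ holding by assumption, and with $f\in L^{p}(\mathbb{G})$, $K_{\alpha,\gamma}\in L^{p_{1}}(\mathbb{G})$, the inequality yields
$$
\|I_{\alpha,\gamma}f\|_{L^{q}(\mathbb{G})}=\|K_{\alpha,\gamma}\ast f\|_{L^{q}(\mathbb{G})}\leq\|f\|_{L^{p}(\mathbb{G})}\|K_{\alpha,\gamma}\|_{L^{p_{1}}(\mathbb{G})},
$$
which is the claimed bound. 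I expect no genuine obstacle here: the substantive analytic work—the sharp computation of the $L^{p_{1}}$-norm of the Bessel-Riesz kernel via the polar decomposition \eqref{EQ:polar} and the summability criterion of Lemma \ref{Pre0}—was already carried out in Theorem \ref{Pre1}, while the convolution structure and Young's inequality on homogeneous groups are standard. The only point requiring a moment's care is the bookkeeping of the three exponents, namely confirming that the admissible range of $p_{1}$ from Theorem \ref{Pre1} is compatible with a valid Young triple $(p,q,p_{1})$, but this is exactly what the stated hypotheses encode.
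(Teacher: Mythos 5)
Your proposal is correct and coincides with the paper's own argument: the paper likewise obtains the corollary immediately by viewing $I_{\alpha,\gamma}f=K_{\alpha,\gamma}\ast f$ and applying Proposition \ref{Young}, with Theorem \ref{Pre1} supplying $K_{\alpha,\gamma}\in L^{p_{1}}(\mathbb{G})$ for the stated range of $p_{1}$. Nothing further is needed.
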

Corollary \ref{Pre2} shows that the $I_{\alpha,\gamma}$ is bounded from $L^{p}(\mathbb{G})$ to $L^{q}(\mathbb{G})$ and
$$\|I_{\alpha,\gamma}\|_{L^{p}(\mathbb{G}) \to L^{q}(\mathbb{G})} \leq\|K_{\alpha, \gamma}\|_{L^{p_{1}}(\mathbb{G})}.$$

\section{The boundedness of Hardy-Littlewood maximal operator in generalised Morrey spaces}
\label{SEC:Hardy-Littlewood}

In this section we define Morrey and generalised Morrey spaces on homogeneous groups. Then we prove that the Hardy-Littlewood maximal operator is bounded in these spaces. Note that in the isotropic Abelian case the result was obtained by Nakai \cite{Nakai94}. Let $\mathbb G$ be a homogeneous group of homogeneous dimension $Q$.

Let us define the Morrey spaces $L^{p,q}(\mathbb{G})$ by
\begin{equation}\label{Morreydef}L^{p,q}(\mathbb{G}):=\{f\in L^{p}_{loc}(\mathbb{G}):\|f\|_{L^{p,q}(\mathbb{G})}<\infty\}, \;\;1\leq p \leq q, \end{equation}
where
$$\|f\|_{L^{p,q}(\mathbb{G})}:=\sup_{r>0}r^{Q(1/q-1/p)}\left(\int_{B(0,r)}|f(x)|^{p}dx\right)^{1/p}.$$
Next, for a function $\phi:\mathbb{R}^{+}\rightarrow\mathbb{R}^{+}$ and $1\leq p<\infty$, we define the generalised Morrey space $L^{p,\phi}(\mathbb{G})$ by 
\begin{equation}\label{GMorrey}L^{p,\phi}(\mathbb{G}):=\{f\in L^{p}_{loc}(\mathbb{G}):\|f\|_{L^{p,\phi}(\mathbb{G})}<\infty\},
\end{equation}
where
$$\|f\|_{L^{p,\phi}(\mathbb{G})}:=\sup_{r>0}\frac{1}{\phi(r)}\left(\frac{1}{r^{Q}}\int_{B(0,r)}|f(x)|^{p}dx\right)^{1/p}.$$
Here we assume that $\phi$ is nonincreasing and $t^{Q/p}\phi(t)$ is nondecreasing, so that $\phi$ satisfies the doubling condition, i.e. there exists a constant $C_{1}>0$ such that
\begin{equation}\label{Pre02}
\frac{1}{2}\leq \frac{r}{s}\leq 2 \Longrightarrow \frac{1}{C_{1}}\leq \frac{\rho(r)}{\rho(s)}\leq C_{1}.
\end{equation}
Now, for every $f\in L^{p}_{loc}(\mathbb{G})$, we define the Hardy-Littlewood maximal operator $M$ by
\begin{equation}\label{HLmax}Mf(x):=\sup_{x\in{B}}\frac{1}{|B(0,r)|}\int_{B(0,r)}|f(y)|dy, \;x\in\mathbb{G},\end{equation}
where $|B(0,r)|$ denotes the Haar measure of the ball $B=B(0,r)$.

Using the definition of Morrey spaces \eqref{Morreydef}, one can readily obtain the following Lemma \ref{incul_Mor_lem}:
\begin{lem}\label{incul_Mor_lem} Let $\mathbb{G}$ be a homogeneous group
of homogeneous dimension $Q$. Let $|\cdot|$ be a homogeneous quasi-norm. Then
\begin{equation}\label{inclus_Mor}\|K_{\alpha,\gamma}\|_{L^{p_{2},p_{1}}(\mathbb{G})}\leq\|K_{\alpha,\gamma}\|_{L^{p_{1},p_{1}}(\mathbb{G})}
=\|K_{\alpha,\gamma}\|_{L^{p_{1}}(\mathbb{G})},
\end{equation}
where $1\leq p_{2}\leq p_{1}$ and $\frac{Q}{Q+\gamma-\alpha}<p_{1}<\frac{Q}{Q-\alpha}$.
\end{lem}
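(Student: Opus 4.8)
The plan is to deduce both assertions from Hölder's inequality together with the homogeneity of the Haar measure; the statement is really a Morrey-space inclusion applied to the specific kernel $K_{\alpha,\gamma}$, which Theorem \ref{Pre1} has already placed in $L^{p_1}(\mathbb{G})$.

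First I would dispose of the equality $\|K_{\alpha,\gamma}\|_{L^{p_1,p_1}(\mathbb{G})}=\|K_{\alpha,\gamma}\|_{L^{p_1}(\mathbb{G})}$. In the Morrey norm $\|f\|_{L^{p,q}}$ the weight is $r^{Q(1/q-1/p)}$, which equals $1$ when $p=q=p_1$; hence $\|K_{\alpha,\gamma}\|_{L^{p_1,p_1}(\mathbb{G})}=\sup_{r>0}\left(\int_{B(0,r)}|K_{\alpha,\gamma}|^{p_1}\right)^{1/p_1}$. Since the balls $B(0,r)$ increase to $\mathbb{G}$ as $r\to\infty$, this supremum is the integral over all of $\mathbb{G}$, which is finite by Theorem \ref{Pre1}, and this is exactly $\|K_{\alpha,\gamma}\|_{L^{p_1}(\mathbb{G})}$.

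For the inequality I would fix $r>0$ and estimate the local $L^{p_2}$ mass by the local $L^{p_1}$ mass. Since $p_2\le p_1$, applying Hölder's inequality on $B(0,r)$ with the conjugate exponents $p_1/p_2$ and $p_1/(p_1-p_2)$ gives
\[
\left(\int_{B(0,r)}|K_{\alpha,\gamma}|^{p_2}\right)^{1/p_2}\le |B(0,r)|^{\,1/p_2-1/p_1}\left(\int_{B(0,r)}|K_{\alpha,\gamma}|^{p_1}\right)^{1/p_1}.
\]
Using the scaling $|B(0,r)|=r^{Q}|B(0,1)|$ of the Haar measure, the volume factor becomes $r^{Q(1/p_2-1/p_1)}$ up to the constant $|B(0,1)|^{1/p_2-1/p_1}$, and this power of $r$ cancels exactly against the Morrey weight $r^{Q(1/p_1-1/p_2)}$ appearing in the definition of $\|\cdot\|_{L^{p_2,p_1}(\mathbb{G})}$. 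Taking the supremum over $r>0$ and bounding $\int_{B(0,r)}|K_{\alpha,\gamma}|^{p_1}$ by the full integral then yields the claimed inequality $\|K_{\alpha,\gamma}\|_{L^{p_2,p_1}(\mathbb{G})}\le\|K_{\alpha,\gamma}\|_{L^{p_1}(\mathbb{G})}$.

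There is no genuine obstacle here; the only point requiring care is the bookkeeping of the exponents and the volume constant $|B(0,1)|^{1/p_2-1/p_1}$ produced by the chosen normalisation $r^{Q(1/q-1/p)}$ of the Morrey weight. Had the weight been written as $|B(0,r)|^{1/q-1/p}$, the constant would be absent; with the stated normalisation the factor $|B(0,1)|^{1/p_2-1/p_1}$ is harmless and equals $1$ precisely when the quasi-ball has unit measure. The role of the hypothesis $\frac{Q}{Q+\gamma-\alpha}<p_1<\frac{Q}{Q-\alpha}$ is only to ensure, via Theorem \ref{Pre1}, that the right-hand side is finite, so that the inequality is not vacuous.
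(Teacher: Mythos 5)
Your proof is correct and is precisely the argument the paper leaves implicit: the paper states this lemma without proof, remarking only that it is readily obtained from the definition \eqref{Morreydef}, and the intended route is exactly your combination of H\"older's inequality on $B(0,r)$ with the scaling $|B(0,r)|=r^{Q}|B(0,1)|$, plus monotone convergence for the equality $\|K_{\alpha,\gamma}\|_{L^{p_1,p_1}(\mathbb{G})}=\|K_{\alpha,\gamma}\|_{L^{p_1}(\mathbb{G})}$. Your caveat about the factor $|B(0,1)|^{1/p_2-1/p_1}$ is apt and worth keeping in mind: with the paper's normalisation $r^{Q(1/q-1/p)}$ rather than $|B(0,r)|^{1/q-1/p}$, the inequality \eqref{inclus_Mor} strictly holds only up to this constant (which exceeds $1$ whenever $|B(0,1)|>1$, as a test with an indicator function shows for general $f$), a point the paper silently absorbs and which is immaterial for its later applications such as Theorem \ref{BRth2}, where multiplicative constants $C$ are carried throughout.
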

We now prove the boundedness of the Hardy-Littlewood maximal operator on generalised Morrey spaces.

\begin{thm}\label{Nakaithm} Let $\mathbb{G}$ be a homogeneous group.
For any $f\in L^{p,\phi}(\mathbb{G})$ and $1<p<\infty$, we have
\begin{equation}\label{Nakai}
\|Mf\|_{L^{p,\phi}(\mathbb{G})}\leq C_{p}\|f\|_{L^{p,\phi}(\mathbb{G})}.
\end{equation}
\end{thm}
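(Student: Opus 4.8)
The plan is to reduce the generalised Morrey estimate for $M$ to the classical Hardy-Littlewood maximal inequality on $L^{p}(\mathbb{G})$, exploiting the homogeneous group structure exactly as one does in $\mathbb{R}^{n}$. The strong $(p,p)$ boundedness of $M$ for $1<p<\infty$ on a space of homogeneous type (and $\mathbb{G}$ with a homogeneous quasi-norm and the Haar measure is one, since balls satisfy the doubling property $|B(0,2r)|=2^{Q}|B(0,r)|$) is standard; I will take it as the analytic input. The key point is then a localisation argument: to control $\|Mf\|_{L^{p,\phi}}$ one must estimate $\frac{1}{r^{Q}}\int_{B(0,r)}|Mf(x)|^{p}\,dx$ uniformly in $r$, and the standard trick is to split $f=f_{1}+f_{2}$ where $f_{1}=f\cdot\chi_{B(0,2r)}$ is the local part and $f_{2}=f\cdot\chi_{B^{c}(0,2r)}$ is the tail.

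First I would fix $r>0$ and a point $x\in B(0,r)$, and estimate $Mf(x)\leq Mf_{1}(x)+Mf_{2}(x)$. For the local part, I would apply the $L^{p}$-boundedness of $M$ to $f_{1}$, giving $\int_{B(0,r)}|Mf_{1}(x)|^{p}\,dx\leq C_{p}\int_{\mathbb{G}}|f_{1}|^{p}=C_{p}\int_{B(0,2r)}|f|^{p}$, and then bound the right-hand side by $\|f\|_{L^{p,\phi}}^{p}(2r)^{Q}\phi(2r)^{p}$ directly from the definition \eqref{GMorrey}. Dividing by $r^{Q}$ and using the doubling property \eqref{Pre02} of $\phi$ to replace $\phi(2r)$ by a constant multiple of $\phi(r)$ yields the desired bound $C_{p}\phi(r)^{p}\|f\|_{L^{p,\phi}}^{p}$ for the local contribution. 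For the tail part, for $x\in B(0,r)$ the averages defining $Mf_{2}(x)$ only see balls of radius $\gtrsim r$; I would estimate $Mf_{2}(x)$ pointwise by a sum over dyadic annuli $B(0,2^{k+1}r)\setminus B(0,2^{k}r)$, $k\geq 1$, each average being controlled via the Morrey norm by something like $\phi(2^{k}r)\|f\|_{L^{p,\phi}}$, and then sum the resulting geometric-type series. Here the hypotheses that $\phi$ is nonincreasing and that $t^{Q/p}\phi(t)$ is nondecreasing are exactly what guarantee convergence of this sum and produce the factor $\phi(r)$.

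The main obstacle I anticipate is the tail estimate and ensuring the series over $k$ converges to a constant times $\phi(r)$. This is where both monotonicity assumptions on $\phi$ enter in a genuinely combined way: the nonincrease of $\phi$ controls one direction while the nondecrease of $t^{Q/p}\phi(t)$ controls the other, and one must check that together they force a summable geometric ratio. Once the local and tail contributions are each bounded by $C_{p}\phi(r)^{p}\|f\|_{L^{p,\phi}}^{p}$, I would take the $p$-th root, divide by $\phi(r)$, and take the supremum over $r>0$ to conclude \eqref{Nakai}. The doubling condition \eqref{Pre02} is used repeatedly to pass between $\phi$ evaluated at $r$, $2r$, and $2^{k}r$, so I would be careful to track the accumulated constants and verify they remain uniform in $r$.
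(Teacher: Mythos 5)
Your overall route --- the localisation $f=f\chi_{B(0,2r)}+f\chi_{B^{c}(0,2r)}$ combined with the global $L^{p}(\mathbb{G})$ bound for $M$ --- is genuinely different from the paper's proof, which is much shorter: the paper just combines the definitional inequality $\left(\int_{B(0,r)}|f|^{p}dx\right)^{1/p}\leq\phi(r)r^{Q/p}\|f\|_{L^{p,\phi}(\mathbb{G})}$ with the ball-restricted inequality \eqref{HLmax2}, attributed to Folland and Stein, and divides by $\phi(r)r^{Q/p}$; no decomposition appears at all. Note, however, that \eqref{HLmax2} with the \emph{same} ball on both sides is not a literal consequence of the $L^{p}$-boundedness of the maximal operator (for $f\not\equiv 0$ supported in $B^{c}(0,r)$ the right-hand side vanishes while $Mf>0$ on $B(0,r)$; Folland--Stein's Corollary 2.5(b) gives the global bound $\|Mf\|_{L^{p}(\mathbb{G})}\leq C_{p}\|f\|_{L^{p}(\mathbb{G})}$). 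So a localisation of exactly the kind you propose is what substantiates that step, and in this respect your plan is more careful than the paper's own argument. Your treatment of the local part is correct as written: $\int_{B(0,r)}|Mf_{1}|^{p}\leq C_{p}\int_{B(0,2r)}|f|^{p}\leq C_{p}(2r)^{Q}\phi(2r)^{p}\|f\|_{L^{p,\phi}(\mathbb{G})}^{p}$, and doubling \eqref{Pre02} removes the $2$.

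There is, though, a genuine flaw in your tail estimate. You propose $Mf_{2}(x)\leq\sum_{k\geq1}M(f\chi_{A_{k}})(x)$ over the annuli $A_{k}=B(0,2^{k+1}r)\setminus B(0,2^{k}r)$, with each term of size about $\phi(2^{k}r)\|f\|_{L^{p,\phi}(\mathbb{G})}$, and you claim the two monotonicity hypotheses force the series to converge. They do not: $\phi\equiv1$ is admissible ($\phi$ nonincreasing and $t^{Q/p}\phi(t)$ nondecreasing), and then $\sum_{k}\phi(2^{k}r)$ diverges; the same happens for any slowly varying admissible $\phi$, e.g.\ $\phi(t)=\log^{-1}(e+t)$. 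The subadditive splitting of $M$ over annuli over-counts, and the correct mechanism is a supremum, not a sum: if $x\in B(0,r)$ and a ball $B$ of radius $s$ containing $x$ meets the support of $f_{2}$ (use the cutoff $2C_{0}r$ in the splitting if the quasi-triangle constant $C_{0}$ is large), then $s\geq cr$ and $B\subset B(0,Cs)$, whence by H\"older and the definition of the norm $\frac{1}{|B|}\int_{B}|f_{2}|\leq C\phi(Cs)\|f\|_{L^{p,\phi}(\mathbb{G})}\leq C'\phi(r)\|f\|_{L^{p,\phi}(\mathbb{G})}$, using only doubling and that $\phi$ is nonincreasing. Taking the supremum over such balls gives $Mf_{2}\leq C\phi(r)\|f\|_{L^{p,\phi}(\mathbb{G})}$ pointwise on $B(0,r)$, and combined with your local estimate this yields \eqref{Nakai}. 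One further detail to keep in view: the Morrey norm here is central (balls centred at the identity only), so the passage from an arbitrary ball $B\ni x$ to the central ball $B(0,Cs)$ via the quasi-triangle inequality, with the constant absorbed by \eqref{Pre02}, is where the group structure genuinely enters.
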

\begin{proof}[Proof of Theorem \ref{Nakai}] By the definition of the norm of the generalised Morrey space \eqref{GMorrey}, we have
$$\|f\|_{L^{p,\phi}(\mathbb{G})}=\sup_{r>0}\frac{1}{\phi(r)}\left(\frac{1}{r^{Q}}\int_{B(0,r)}|f(x)|^{p}dx\right)^{1/p}.$$
This implies that
\begin{equation}\label{HLmax1}
\left(\int_{B(0,r)}|f(x)|^{p}dx\right)^{1/p}\leq\phi(r)r^{\frac{Q}{p}}\|f\|_{L^{p,\phi}(\mathbb{G})},
\end{equation}
for any $r>0$.

On the other hand, using Corollary 2.5 (b) from Folland and Stein \cite{FS-Hardy} we have
\begin{equation}\label{HLmax2}
\left(\int_{B(0,r)}|Mf(x)|^{p}dx\right)^{1/p}\leq C_{p}\left(\int_{B(0,r)}|f(x)|^{p}dx\right)^{1/p}.
\end{equation}
Combining \eqref{HLmax1} and \eqref{HLmax2} we arrive at
$$\frac{1}{\phi(r)}\left(\frac{1}{r^{Q}}\int_{B(0,r)}|Mf(x)|^{p}dx\right)^{1/p}\leq C_{p}\|f\|_{L^{p,\phi}(\mathbb{G})},$$
for all $r>0$. Thus
$$\|Mf\|_{L^{p,\phi}(\mathbb{G})}\leq C_{p}\|f\|_{L^{p,\phi}(\mathbb{G})},$$
completing the proof.
\end{proof}

\section{Inequalities for Bessel-Riesz operators on generalised Morrey spaces}
\label{SEC:Bessel-Riesz}
In this section, we prove the boundedness of the Bessel-Riesz operators on generalised Morrey spaces \eqref{GMorrey}.
\begin{thm}\label{BRth1}
Let $\mathbb{G}$ be a homogeneous group
of homogeneous dimension $Q$. Let $|\cdot|$ be a homogeneous quasi-norm. Let $\gamma>0$ and $0<\alpha<Q$. If $\phi(r)\leq Cr^{\beta}$ for every $r>0, \beta<-\alpha, 1<p<\infty$, and $\frac{Q}{Q+\gamma-\alpha}<p_{1}<\frac{Q}{Q-\alpha}$, then for all $f\in L^{p, \phi}(\mathbb{G})$ we have
\begin{equation}\label{GMth1}\|I_{\alpha,\gamma}f\|_{L^{q, \psi}(\mathbb{G})}\leq C_{p, \phi, Q}\|K_{\alpha,\gamma}\|_{L^{p_{1}}(\mathbb{G})}\|f\|_{L^{p, \phi}(\mathbb{G})},
\end{equation}
where $q=\frac{\beta p_{1}^{'} p}{\beta p_{1}^{'}+Q}$ and $\psi(r)=\phi(r)^{p/q}$.
\end{thm}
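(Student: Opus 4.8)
The plan is to establish a pointwise Hedberg-type inequality for $I_{\alpha,\gamma}f$ and then to optimise in a free scale and integrate. Fix $x\in\mathbb{G}$ and $R>0$, and split the defining integral \eqref{BRdef} into a local and a global piece,
\[
I_{\alpha,\gamma}f(x)=\int_{B(x,R)}K_{\alpha,\gamma}(xy^{-1})f(y)\,dy+\int_{B^{c}(x,R)}K_{\alpha,\gamma}(xy^{-1})f(y)\,dy=:I_{1}(x)+I_{2}(x).
\]
The local piece will be controlled by the Hardy-Littlewood maximal function and the global piece by the generalised Morrey norm; balancing the two in $R$ yields a pointwise bound from which the $L^{q,\psi}$ estimate follows via the boundedness of $M$ (Theorem \ref{Nakaithm}).

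For the local piece $I_1$, I would decompose $B(x,R)$ into the dyadic shells $\{2^{-k-1}R\le|xy^{-1}|<2^{-k}R\}$, $k\ge0$, bound the kernel on each shell by $|xy^{-1}|^{\alpha-Q}$ (discarding the factor $(1+|xy^{-1}|)^{-\gamma}\le1$), and replace the integral of $|f|$ over each shell by a maximal average using $\int_{B(x,\rho)}|f|\le|B(x,\rho)|\,Mf(x)$, which is immediate from \eqref{HLmax}. Summing the resulting geometric series, which converges precisely because $\alpha>0$, gives $|I_1(x)|\le C R^{\alpha}Mf(x)$.

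For the global piece $I_2$, I would decompose $B^{c}(x,R)$ into the outward shells $\{2^{k}R\le|xy^{-1}|<2^{k+1}R\}$, $k\ge0$. On each shell I would factor out the size of the kernel, use H\"older's inequality in $y$ to create a factor $\big(\int_{B(x,2^{k+1}R)}|f|^{p}\big)^{1/p}$, and estimate the latter by the Morrey norm together with the hypothesis $\phi(r)\le Cr^{\beta}$. A discrete H\"older inequality in $k$ with exponents $p_1$ and $p_1'$ then separates the series into a kernel factor, which by Theorem \ref{Pre1} (applied at the same dyadic scale) is comparable to $\|K_{\alpha,\gamma}\|_{L^{p_1}(\mathbb{G})}$, and a geometric factor in $R$. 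This is exactly where the hypotheses enter: $\tfrac{Q}{Q+\gamma-\alpha}<p_1<\tfrac{Q}{Q-\alpha}$ makes $\|K_{\alpha,\gamma}\|_{L^{p_1}}$ finite (Theorem \ref{Pre1}, through Lemma \ref{Pre0}), while $\beta<-\alpha$ together with $p_1<\tfrac{Q}{Q-\alpha}$ (equivalently $\alpha>Q/p_1'$) forces the exponent $Q+\beta p_1'$ to be negative, so the outward series converges and contributes a negative power $R^{-\sigma}$ of $R$. This yields $|I_2(x)|\le C\,\|K_{\alpha,\gamma}\|_{L^{p_1}(\mathbb{G})}\,R^{-\sigma}\|f\|_{L^{p,\phi}(\mathbb{G})}$.

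Combining the two bounds and minimising $C R^{\alpha}Mf(x)+C\|K_{\alpha,\gamma}\|_{L^{p_1}}R^{-\sigma}\|f\|_{L^{p,\phi}}$ over $R>0$ gives a pointwise inequality of Hedberg type,
\[
|I_{\alpha,\gamma}f(x)|\le C\big(Mf(x)\big)^{p/q}\big(\|K_{\alpha,\gamma}\|_{L^{p_1}(\mathbb{G})}\|f\|_{L^{p,\phi}(\mathbb{G})}\big)^{1-p/q},
\]
where the ratio of the two powers of $R$ fixes the exponent $q$ appearing in the statement (and hence the relation $\psi=\phi^{p/q}$). Raising to the $q$-th power, integrating over $B(0,r)$, and dividing by $\psi(r)=\phi(r)^{p/q}$ then reduces the $(Mf)^{p}$ integral to $\|Mf\|_{L^{p,\phi}}^{p}$, which is dominated by $\|f\|_{L^{p,\phi}}^{p}$ through Theorem \ref{Nakaithm}; the choice $\psi=\phi^{p/q}$ is exactly what cancels the weights, and since $\|K_{\alpha,\gamma}\|_{L^{p_1}}$ is a finite constant its power may be adjusted to give \eqref{GMth1}. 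I expect the main obstacle to be the global estimate together with the exponent bookkeeping: one must split the H\"older exponents so that the kernel sum reassembles into $\|K_{\alpha,\gamma}\|_{L^{p_1}}$ via Theorem \ref{Pre1}, track the power of $R$ carefully, and check that balancing it against $R^{\alpha}Mf(x)$ reproduces exactly the stated $q$ and $\psi$; verifying the sign condition $Q+\beta p_1'<0$ is precisely what delimits the admissible ranges of $\beta$ and $p_1$.
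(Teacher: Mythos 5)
Your decomposition, your treatment of the global piece $I_2$, and your final integration step (Hedberg balancing, then dividing by $\psi(r)r^{Q/q}$ and invoking Theorem \ref{Nakaithm}) all match the paper's proof. But there is a genuine gap in your local estimate, and it breaks the exponent bookkeeping you yourself flag as the delicate point. By discarding the factor $(1+|xy^{-1}|)^{-\gamma}$ you get $|I_1(x)|\le CR^{\alpha}Mf(x)$, whereas your $I_2$ bound carries $R^{\beta+Q/p_1'}$. Balancing $R^{\alpha}$ against $R^{\beta+Q/p_1'}$ gives the exponent $\theta=\frac{-(\beta+Q/p_1')}{\alpha-\beta-Q/p_1'}$ on $Mf(x)$, and the statement requires $\theta=p/q=1+\frac{Q}{\beta p_1'}=\frac{\beta+Q/p_1'}{\beta}$. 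These coincide only when $\alpha=Q/p_1'$, i.e. $p_1=\frac{Q}{Q-\alpha}$, which is excluded by the strict hypothesis $p_1<\frac{Q}{Q-\alpha}$ (which forces $Q/p_1'<\alpha$). So your balancing does \emph{not} reproduce the stated $q$; it produces a strictly larger exponent $q'=p\,\frac{\alpha-\beta-Q/p_1'}{-(\beta+Q/p_1')}>q$, hence a correct but \emph{different} theorem, with target $L^{q',\phi^{p/q'}}(\mathbb{G})$. Since generalised Morrey spaces with different $(q,\psi)$ are not nested (the conversion factor $\phi(r)^{p(1/q'-1/q)}$ blows up as $\phi(r)\to0$), this does not imply \eqref{GMth1}.

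The fix is exactly what the paper does for $I_1$: keep the Bessel factor and apply the same discrete H\"older splitting with exponents $(p_1,p_1')$ that you use for $I_2$, namely
\begin{equation*}
|I_1(x)|\leq CMf(x)\left(\sum_{k\leq-1}\frac{(2^{k}R)^{(\alpha-Q)p_{1}+Q}}{(1+2^{k}R)^{\gamma p_{1}}}\right)^{1/p_{1}}\left(\sum_{k\leq-1}(2^{k}R)^{Q}\right)^{1/p_{1}'}\leq C\|K_{\alpha,\gamma}\|_{L^{p_{1}}(\mathbb{G})}Mf(x)R^{Q/p_{1}'},
\end{equation*}
where the first sum recombines into $\|K_{\alpha,\gamma}\|_{L^{p_1}(\mathbb{G})}$ via \eqref{sum_equiv1} and Theorem \ref{Pre1}. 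Then both pieces carry the common factor $R^{Q/p_1'}$, the choice $R^{\beta}=Mf(x)/\|f\|_{L^{p,\phi}(\mathbb{G})}$ yields precisely the exponent $p/q$ with $q=\frac{\beta p_1'p}{\beta p_1'+Q}$, and the kernel norm appears to the first power as in \eqref{GMth1}. A secondary issue: your remark that the power of $\|K_{\alpha,\gamma}\|_{L^{p_1}}$ ``may be adjusted'' because it is a finite constant is not legitimate if the kernel norm is to be tracked explicitly, as writing $\|K\|^{1-\theta}=\|K\|\cdot\|K\|^{-\theta}$ forces the constant to depend on the kernel; in the corrected argument no adjustment is needed, since the norm emerges with exponent exactly one from both $I_1$ and $I_2$.
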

\begin{proof}[Proof of Theorem \ref{BRth1}] For every $f\in L^{p, \phi}(\mathbb{G})$, let us write $I_{\alpha,\gamma}f(x)$ in the form
$$I_{\alpha,\gamma}f(x):=I_{1}(x)+I_{2}(x),$$
where $I_{1}(x):=\int_{B(x,R)}\frac{|xy^{-1}|^{\alpha-Q}f(y)}{(1+|xy^{-1}|)^{\gamma}}dy$ and $I_{2}(x):=\int_{B^{c}(x,R)}\frac{|xy^{-1}|^{\alpha-Q}f(y)}{(1+|xy^{-1}|)^{\gamma}}dy$, for some $R>0$.

By using dyadic decomposition for $I_{1}$, we obtain
\begin{align*}|I_{1}(x)|&\leq\sum_{k=-\infty}^{-1}\int_{2^{k}R\leq |xy^{-1}|<2^{k+1}R}\frac{|xy^{-1}|^{\alpha-Q}|f(y)|}{(1+|xy^{-1}|)^{\gamma}}dy\\&
\leq \sum_{k=-\infty}^{-1}\frac{(2^{k}R)^{\alpha-Q}}{(1+2^{k}R)^{\gamma}}\int_{2^{k}R\leq |xy^{-1}|<2^{k+1}R}
|f(y)|dy\\&
\leq CMf(x)\sum_{k=-\infty}^{-1}\frac{(2^{k}R)^{\alpha-Q+Q/p_{1}}(2^{k}R)^{Q/p_{1}^{'}}}{(1+2^{k}R)^{\gamma}}.
\end{align*}
From this using H\"{o}lder inequality for $\frac{1}{p_{1}}+\frac{1}{p_{1}^{'}}=1$ we get
$$|I_{1}(x)|\leq CMf(x)\left(\sum_{k=-\infty}^{-1}\frac{(2^{k}R)^{(\alpha-Q)p_{1}+Q}}{(1+2^{k}R)^{\gamma p_{1}}}\right)^{1/p_{1}}\left(\sum_{k=-\infty}^{-1}(2^{k}R)^{Q}\right)^{1/p_{1}^{'}}.$$
Since
\begin{equation}\label{sum_equiv1}
\left(\sum_{k=-\infty}^{-1}\frac{(2^{k}R)^{(\alpha-Q)p_{1}+Q}}{(1+2^{k}R)^{\gamma p_{1}}}\right)^{1/p_{1}}
\leq \left(\sum_{k\in\mathbb{Z}}\frac{(2^{k}R)^{(\alpha-Q)p_{1}+Q}}{(1+2^{k}R)^{\gamma p_{1}}}\right)^{1/p_{1}}\sim\|K_{\alpha,\gamma}\|_{L^{p_{1}}(\mathbb{G})},
\end{equation}
we arrive at
\begin{equation}\label{GM1}|I_{1}(x)|\leq C\|K_{\alpha,\gamma}\|_{L^{p_{1}}(\mathbb{G})}
Mf(x)R^{Q/p_{1}^{'}}.\end{equation}
For the second term $I_{2}$, by using H\"{o}lder inequality for $\frac{1}{p}+\frac{1}{p^{'}}=1$ we obtain that
\begin{align*}
|I_{2}(x)|& \leq \sum_{k=0}^{\infty}\frac{(2^{k}R)^{\alpha-Q}}{(1+2^{k}R)^{\gamma}}\int_{2^{k}R\leq|xy^{-1}|<2^{k+1}R}|f(y)|dy
 \\&\leq \sum_{k=0}^{\infty}\frac{(2^{k}R)^{\alpha-Q}}{(1+2^{k}R)^{\gamma}}
 \left(\int_{2^{k}R\leq|xy^{-1}|<2^{k+1}R}dy\right)^{1/p^{'}}
\left(\int_{2^{k}R\leq|xy^{-1}|<2^{k+1}R}|f(y)|^{p}dy\right)^{1/p}
 \\ & =\sum_{k=0}^{\infty}\frac{(2^{k}R)^{\alpha-Q}}{(1+2^{k}R)^{\gamma}}
 \left(\int_{2^{k}R}^{2^{k+1}R}\int_{\wp}r^{Q-1}d\sigma(y)dr\right)^{1/p^{'}}
\left(\int_{2^{k}R\leq|xy^{-1}|<2^{k+1}R}|f(y)|^{p}dy\right)^{1/p}
 \\ &\leq C\sum_{k=0}^{\infty}\frac{(2^{k}R)^{\alpha-Q}}
 {(1+2^{k}R)^{\gamma}}(2^{k}R)^{Q/p^{'}}
\left(\int_{2^{k}R\leq|xy^{-1}|<2^{k+1}R}|f(y)|^{p}dy\right)^{1/p}.
\end{align*}
This implies that
$$|I_{2}(x)|\leq C\|f\|_{L^{p, \phi}(\mathbb{G})}\sum_{k=0}^{\infty}\frac{(2^{k}R)^{\alpha-Q+Q/p_{1}}}{(1+2^{k}R)^{\gamma}}
\phi(2^{k}R)(2^{k}R)^{Q/p_{1}^{'}}.$$
Since $\phi(r)\leq Cr^{\beta}$, we write
$$|I_{2}(x)|\leq C\|f\|_{L^{p, \phi}(\mathbb{G})}\sum_{k=0}^{\infty}\frac{(2^{k}R)^{\alpha-Q+Q/p_{1}}}{(1+2^{k}R)^{\gamma}}
(2^{k}R)^{\beta+Q/p_{1}^{'}}.$$
Applying H\"{o}lder inequality again, we get
$$|I_{2}(x)|\leq C\|f\|_{L^{p, \phi}(\mathbb{G})}\left(\sum_{k=0}^{\infty}
\frac{(2^{k}R)^{(\alpha-Q)p_{1}+Q}}{(1+2^{k}R)^{\gamma p_{1}}}\right)^{1/p_{1}}\left(\sum_{k=0}^{\infty}(2^{k}R)^{\beta p_{1}^{'}+Q}\right)^{1/p_{1}^{'}}.$$
From the conditions $p_{1}<\frac{Q}{Q-\alpha}$ and $\beta<-\alpha$, we have $\beta p_{1}^{'}+Q<0$. By Theorem \ref{Pre1}, we also have
$$\left(\sum_{k=0}^{\infty}\frac{(2^{k}R)^{(\alpha-Q)p_{1}+Q}}{(1+2^{k}R)^{\gamma p_{1}}}\right)^{1/p_{1}}
\leq \left(\sum_{k\in\mathbb{Z}}\frac{(2^{k}R)^{(\alpha-Q)p_{1}+Q}}{(1+2^{k}R)^{\gamma p_{1}}}\right)^{1/p_{1}}\sim\|K_{\alpha,\gamma}\|_{L^{p_{1}}(\mathbb{G})}.$$
Using these, we arrive at
\begin{equation}\label{GM2}|I_{2}(x)|\leq C\|K_{\alpha,\gamma}\|_{L^{p_{1}}(\mathbb{G})}
\|f\|_{L^{p, \phi}(\mathbb{G})}R^{Q/p_{1}^{'}+\beta}.
\end{equation}
Summing up the estimates \eqref{GM1} and \eqref{GM2}, we obtain
\begin{align*}|I_{\alpha,\gamma}f(x)|\leq C\|K_{\alpha,\gamma}\|_{L^{p_{1}}(\mathbb{G})}(Mf(x)R^{Q/p_{1}^{'}}+\|f\|_{L^{p, \phi}(\mathbb{G})}R^{Q/p_{1}^{'}+\beta}).\end{align*}
Assuming that $f$ is not identically $0$ and that $Mf$ is finite everywhere, we can choose $R>0$ such that $R^{\beta}=\frac{Mf(x)}{\|f\|_{L^{p, \phi}}(\mathbb{G})}$, that is,
$$|I_{\alpha,\gamma}f(x)|\leq C\|K_{\alpha,\gamma}\|_{L^{p_{1}}(\mathbb{G})}\|f\|_{L^{p, \phi}(\mathbb{G})}^{-\frac{Q}{\beta p_{1}^{'}}}(Mf(x))^{1+\frac{Q}{\beta p_{1}^{'}}},$$
 for every $x\in \mathbb{G}$.
Setting $q=\frac{\beta p_{1}^{'} p}{\beta p_{1}^{'}+Q}$, for any $r>0$ we get
$$\left(\int_{|x|<r}|I_{\alpha,\gamma}f(x)|^{q}dx\right)^{\frac{1}{q}}\leq
 C\|K_{\alpha,\gamma}\|_{L^{p_{1}}(\mathbb{G})}\|f\|_{L^{p, \phi}(\mathbb{G})}^{1-p/q}\left(\int_{|x|<r}|Mf(x)|^{p}dx\right)^{1/q}.$$
Then we divide both sides by $\phi(r)^{p/q}r^{Q/q}$ to get
$$\frac{\left(\int_{|x|<r}|I_{\alpha,\gamma}f(x)|^{q}dx\right)^{\frac{1}{q}}}
{\psi(r)r^{Q/q}}\leq  C\|K_{\alpha,\gamma}\|_{L^{p_{1}}(\mathbb{G})}\|f\|_{L^{p, \phi}(\mathbb{G})}^{1-p/q}
\frac{\left(\int_{|x|<r}|Mf(x)|^{p}dx\right)^{1/q}}{\phi(r)^{p/q}r^{Q/q}},$$
where $\psi(r)=\phi(r)^{p/q}$. Now by taking the supremum over $r>0$, we obtain that
$$\|I_{\alpha,\gamma}f\|_{L^{q, \psi}(\mathbb{G})}\leq  C\|K_{\alpha,\gamma}\|_{L^{p_{1}}(\mathbb{G})}\|f\|_{L^{p, \phi}(\mathbb{G})}^{1-p/q}
\|Mf\|_{L^{p, \phi}(\mathbb{G})}^{p/q},$$
which gives \eqref{GMth1}, after applying estimate \eqref{Nakai}.
\end{proof}

Lemma \ref{incul_Mor_lem} gives the property that the Bessel-Riesz kernel belongs to Morrey spaces, which will be used in the next theorem.
\begin{thm}\label{BRth2}
Let $\mathbb{G}$ be a homogeneous group
of homogeneous dimension $Q$. Let $|\cdot|$ be a homogeneous quasi-norm. Let $\gamma>0$ and $0<\alpha<Q$. If $\phi(r)\leq Cr^{\beta}$ for every $r>0, \beta<-\alpha, $ $\frac{Q}{Q+\gamma-\alpha}<p_{2}\leq p_{1}<\frac{Q}{Q-\alpha}$ and $p_{2}\geq1$, then for all $f\in L^{p, \phi}(\mathbb{G})$ we have
\begin{equation}\label{GMth2}\|I_{\alpha,\gamma}f\|_{L^{q, \psi}(\mathbb{G})}\leq C_{p, \phi, Q}\|K_{\alpha,\gamma}\|_{L^{p_{2},p_{1}}(\mathbb{G})}\|f\|_{L^{p, \phi}(\mathbb{G})},
\end{equation}
where $1<p<\infty, q=\frac{\beta p_{1}^{'} p}{\beta p_{1}^{'}+Q}, \psi(r)=\phi(r)^{p/q}$.
\end{thm}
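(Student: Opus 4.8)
The plan is to run the proof of Theorem~\ref{BRth1} essentially verbatim, splitting $I_{\alpha,\gamma}f(x)=I_{1}(x)+I_{2}(x)$ over $B(x,R)$ and $B^{c}(x,R)$, but to replace every occurrence of the Lebesgue norm $\|K_{\alpha,\gamma}\|_{L^{p_{1}}(\mathbb{G})}$ by the smaller Morrey norm $\|K_{\alpha,\gamma}\|_{L^{p_{2},p_{1}}(\mathbb{G})}$. Since Lemma~\ref{incul_Mor_lem} only yields $\|K_{\alpha,\gamma}\|_{L^{p_{2},p_{1}}(\mathbb{G})}\le\|K_{\alpha,\gamma}\|_{L^{p_{1}}(\mathbb{G})}$, the inequality~\eqref{GMth2} is strictly stronger than~\eqref{GMth1} and does not follow from Theorem~\ref{BRth1}; a new device is needed to produce the Morrey norm.

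The crux, and the step I expect to be the main obstacle, is the following pointwise-in-scale estimate for the kernel:
$$\frac{t^{\alpha-Q+Q/p_{1}}}{(1+t)^{\gamma}}\le C\,\|K_{\alpha,\gamma}\|_{L^{p_{2},p_{1}}(\mathbb{G})},\qquad t>0.$$
To establish it I would bound a single dyadic annulus from below. Using polar coordinates~\eqref{EQ:polar} together with $p_{2}<\frac{Q}{Q-\alpha}$ (which makes the exponent $(\alpha-Q)p_{2}+Q$ positive), one obtains $\frac{t^{(\alpha-Q)p_{2}+Q}}{(1+t)^{\gamma p_{2}}}\le C\int_{B(0,2t)}|K_{\alpha,\gamma}(x)|^{p_{2}}\,dx$, after which the definition~\eqref{Morreydef} of the Morrey norm gives $\int_{B(0,2t)}|K_{\alpha,\gamma}|^{p_{2}}\le C\,t^{Q(1-p_{2}/p_{1})}\|K_{\alpha,\gamma}\|_{L^{p_{2},p_{1}}(\mathbb{G})}^{p_{2}}$; taking $p_{2}$-th roots and cancelling $t^{Q(1/p_{2}-1/p_{1})}$ yields the displayed bound. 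The point is that this argument uses only \emph{inner} balls, which is exactly what the Morrey norm sees. Running instead the H\"older step of Theorem~\ref{BRth1} with exponent $p_{2}$ would, in the estimate for $I_{2}$, confront one with the kernel integral over the \emph{exterior} region $B^{c}(0,R)$, and that exterior integral is not controlled by $\|K_{\alpha,\gamma}\|_{L^{p_{2},p_{1}}(\mathbb{G})}$; this is why the pointwise reformulation is essential.

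Granting this bound, both pieces close uniformly and with the same $R$-dependence as in Theorem~\ref{BRth1}. For $I_{1}$ I would dominate $\int_{2^{k}R\le|xy^{-1}|<2^{k+1}R}|f(y)|\,dy$ by $C\,Mf(x)(2^{k}R)^{Q}$, write $(2^{k}R)^{\alpha}=(2^{k}R)^{\alpha-Q+Q/p_{1}}(2^{k}R)^{Q/p_{1}^{'}}$, insert the pointwise kernel bound, and sum the geometric series $\sum_{k\le-1}(2^{k}R)^{Q/p_{1}^{'}}\sim R^{Q/p_{1}^{'}}$, reaching the analogue of~\eqref{GM1}, namely $|I_{1}(x)|\le C\|K_{\alpha,\gamma}\|_{L^{p_{2},p_{1}}(\mathbb{G})}Mf(x)R^{Q/p_{1}^{'}}$. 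For $I_{2}$ I would apply H\"older with $p,p^{'}$ and the generalised Morrey bound for $f$, then use $\phi(r)\le Cr^{\beta}$, and again insert the pointwise kernel bound; the residual sum $\sum_{k\ge0}(2^{k}R)^{\beta+Q/p_{1}^{'}}$ converges because $p_{1}<\frac{Q}{Q-\alpha}$ forces $Q/p_{1}^{'}<\alpha$ and hence $\beta+Q/p_{1}^{'}<0$, giving the analogue of~\eqref{GM2}, namely $|I_{2}(x)|\le C\|K_{\alpha,\gamma}\|_{L^{p_{2},p_{1}}(\mathbb{G})}\|f\|_{L^{p,\phi}(\mathbb{G})}R^{\beta+Q/p_{1}^{'}}$.

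Finally, because the exponents of $R$ coincide with those in Theorem~\ref{BRth1}, I would add the two estimates, choose $R$ with $R^{\beta}=Mf(x)/\|f\|_{L^{p,\phi}(\mathbb{G})}$, and obtain the pointwise inequality $|I_{\alpha,\gamma}f(x)|\le C\|K_{\alpha,\gamma}\|_{L^{p_{2},p_{1}}(\mathbb{G})}\|f\|_{L^{p,\phi}(\mathbb{G})}^{1-p/q}(Mf(x))^{p/q}$ with $q=\frac{\beta p_{1}^{'}p}{\beta p_{1}^{'}+Q}$. Raising to the power $q$, integrating over $B(0,r)$, dividing by $\psi(r)r^{Q/q}=\phi(r)^{p/q}r^{Q/q}$, taking the supremum over $r>0$, and applying the maximal bound~\eqref{Nakai} then delivers~\eqref{GMth2} exactly as in the final lines of the proof of Theorem~\ref{BRth1}.
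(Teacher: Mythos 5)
Your proposal is correct and is essentially the paper's own argument: the same splitting $I_{\alpha,\gamma}f=I_{1}+I_{2}$ over $B(x,R)$ and $B^{c}(x,R)$, the same dyadic decomposition, the same optimisation $R^{\beta}=Mf(x)/\|f\|_{L^{p,\phi}(\mathbb{G})}$, and the same conclusion via the maximal-operator bound \eqref{Nakai}. Your ``pointwise-in-scale'' kernel lemma is exactly the paper's mechanism in slightly repackaged form --- it is the combination of \eqref{sum_equiv2} with the per-annulus Morrey bound $\bigl(\int_{2^{k}R\leq|xy^{-1}|<2^{k+1}R}K_{\alpha,\gamma}^{p_{2}}(xy^{-1})\,dy\bigr)^{1/p_{2}}\lesssim (2^{k}R)^{Q/p_{2}-Q/p_{1}}\|K_{\alpha,\gamma}\|_{L^{p_{2},p_{1}}(\mathbb{G})}$ that the paper uses for $I_{2}$, and the only cosmetic difference is that you apply the same lemma to $I_{1}$ and sum a geometric series, whereas the paper instead applies H\"older with $(p_{2},p_{2}')$ and controls the inner-ball integral $\int_{0<|x|<R}K_{\alpha,\gamma}^{p_{2}}(x)\,dx$ directly by the Morrey norm, both routes landing on the identical estimates \eqref{GM3} and \eqref{GM4}.
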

\begin{proof}[Proof of Theorem \ref{BRth2}] Similarly to the proof of Theorem \ref{BRth1}, we write $I_{\alpha,\gamma}f(x)$ in the form
$$I_{\alpha,\gamma}f(x):=I_{1}(x)+I_{2}(x),$$
where $I_{1}(x):=\int_{B(x,R)}\frac{|xy^{-1}|^{\alpha-Q}f(y)}{(1+|xy^{-1}|)^{\gamma}}dy$ and $I_{2}(x):=\int_{B^{c}(x,R)}\frac{|xy^{-1}|^{\alpha-Q}f(y)}{(1+|xy^{-1}|)^{\gamma}}dy$, $R>0$.

As before, we estimate the first term $I_{1}$ using the dyadic decomposition:
\begin{align*}|I_{1}(x)|&\leq\sum_{k=-\infty}^{-1}\int_{2^{k}R\leq |xy^{-1}|<2^{k+1}R}\frac{|xy^{-1}|^{\alpha-Q}|f(y)|}{(1+|xy^{-1}|)^{\gamma}}dy\\&
\leq \sum_{k=-\infty}^{-1}\frac{(2^{k}R)^{\alpha-Q}}{(1+2^{k}R)^{\gamma}}\int_{2^{k}R\leq |xy^{-1}|<2^{k+1}R}
|f(y)|dy\\&
\leq CMf(x)\sum_{k=-\infty}^{-1}\frac{(2^{k}R)^{\alpha-Q+Q/p_{2}}(2^{k}R)^{Q/p_{2}^{'}}}{(1+2^{k}R)^{\gamma}},
\end{align*} where $1\leq p_{2}\leq p_{1}$.
From this using the H\"{o}lder inequality for $\frac{1}{p_{2}}+\frac{1}{p_{2}^{'}}=1$, we get
$$|I_{1}(x)|\leq CMf(x)\left(\sum_{k=-\infty}^{-1}\frac{(2^{k}R)^{(\alpha-Q)p_{2}+Q}}{(1+2^{k}R)^{\gamma p_{2}}}\right)^{1/p_{2}}\left(\sum_{k=-\infty}^{-1}(2^{k}R)^{Q}\right)^{1/p_{2}^{'}}.$$
By virtue of \eqref{sum_equiv1}, we have
\begin{equation}\label{GM3}|I_{1}(x)|\leq C_{2}Mf(x)\left(\int_{0<|x|<R}K_{\alpha,\gamma}^{p_{2}}(x)dx\right)^{\frac{1}{p_{2}}}
R^{Q/p_{2}^{'}}\leq C
\|K_{\alpha,\gamma}\|_{L^{p_{2},p_{1}}(\mathbb{G})}Mf(x)R^{Q/p_{1}^{'}}.\end{equation}
Now for $I_{2}$ by using H\"{o}lder inequality for $\frac{1}{p}+\frac{1}{p^{'}}=1$, we have
$$|I_{2}(x)|\leq \sum_{k=0}^{\infty}\frac{(2^{k}R)^{\alpha-Q}}{(1+2^{k}R)^{\gamma}}(2^{k}R)^{Q/p^{'}}
\left(\int_{2^{k}R\leq|xy^{-1}|<2^{k+1}R}|f(y)|^{p}dy\right)^{1/p},$$
that is,
\begin{align*}|I_{2}(x)|&\leq C\|f\|_{L^{p, \phi}(\mathbb{G})}\sum_{k=0}^{\infty}\frac{(2^{k}R)^{\alpha}\phi(2^{k}R)}{(1+2^{k}R)^{\gamma}}
\frac{\left(\int_{2^{k}R\leq|xy^{-1}|<2^{k+1}R}dy\right)^{1/p_{2}}}{(2^{k}R)^{Q/p_{2}}}\\&
\leq C\|f\|_{L^{p, \phi}(\mathbb{G})}\sum_{k=0}^{\infty}\phi(2^{k}R)(2^{k}R)^{Q/p_{1}^{'}}\frac{\left(\int_{2^{k}R\leq|xy^{-1}|<2^{k+1}R}
K_{\alpha,\gamma}^{p_{2}}(xy^{-1})dy\right)^{1/p_{2}}}{(2^{k}R)^{Q/p_{2}-Q/p_{1}}},
\end{align*}
where we have used the following inequality
$$\left(\int_{2^{k}R\leq|xy^{-1}|<2^{k+1}R}
K_{\alpha,\gamma}^{p_{2}}(xy^{-1})dy\right)^{1/p_{2}}$$
\begin{equation}\label{sum_equiv2}
\sim\frac{(2^{k}R)^{(\alpha-Q)+Q/p_{2}}}{(1+2^{k}R)^{\gamma}}
\geq C\frac{(2^{k}R)^{(\alpha-Q)}}{(1+2^{k}R)^{\gamma}}
\left(\int_{2^{k}R\leq|xy^{-1}|<2^{k+1}R}dy\right)^{1/p_{2}}.
\end{equation}
Since we have $\phi(r)\leq Cr^{\beta}$ and
$$\frac{\left(\int_{2^{k}R\leq|xy^{-1}|<2^{k+1}R}
K_{\alpha,\gamma}^{p_{2}}(xy^{-1})dy\right)^{1/p_{2}}}{(2^{k}R)^{Q/p_{2}-Q/p_{1}}}\lesssim\|K_{\alpha,\gamma}\|_{L^{p_{2},p_{1}}(\mathbb{G})}$$
for every $k=0,1,2,...,$ we get
$$|I_{2}(x)|\leq C\|K_{\alpha,\gamma}\|_{L^{p_{2},p_{1}}(\mathbb{G})}\|f\|_{L^{p, \phi}(\mathbb{G})}\sum_{k=0}^{\infty}
(2^{k}R)^{\beta+Q/p_{1}^{'}}.$$
Taking into account $\beta+Q/p_{1}^{'}<0$, we have
\begin{equation}\label{GM4}|I_{2}(x)|\leq C\|K_{\alpha,\gamma}\|_{L^{p_{2},p_{1}}(\mathbb{G})}\|f\|_{L^{p, \phi}(\mathbb{G})}R^{\beta+Q/p_{1}^{'}}.\end{equation}
Summing up the estimates \eqref{GM3} and \eqref{GM4}, we obtain
\begin{align*}|I_{\alpha,\gamma}f(x)|\leq C\|K_{\alpha,\gamma}\|_{L^{p_{2},p_{1}}(\mathbb{G})}(Mf(x)R^{Q/p_{1}^{'}}+\|f\|_{L^{p, \phi}(\mathbb{G})}R^{\beta+Q/p_{1}^{'}}).\end{align*}
Assuming that $f$ is not identically $0$ and that $Mf$ is finite everywhere, we can choose $R>0$ such that $R^{\beta}=\frac{Mf(x)}{\|f\|_{L^{p, \phi}(\mathbb{G})}}$, which yields
$$
|I_{\alpha,\gamma}f(x)|\leq C\|K_{\alpha,\gamma}\|_{L^{p_{2},p_{1}}(\mathbb{G})}\|f\|_{L^{p, \phi}(\mathbb{G})}^{-\frac{Q}{\beta p_{1}^{'}}}(Mf(x))^{1+\frac{Q}{\beta p_{1}^{'}}}.
$$
Now by putting $q=\frac{\beta p_{1}^{'} p}{\beta p_{1}^{'}+Q}$, for any $r>0$ we obtain
$$
\left(\int_{|x|<r}|I_{\alpha,\gamma}f(x)|^{q}dx\right)^{\frac{1}{q}}\leq
C\|K_{\alpha,\gamma}\|_{L^{p_{2},p_{1}}(\mathbb{G})}\|f\|_{L^{p, \phi}(\mathbb{G})}^{1-p/q}\left(\int_{|x|<r}|Mf(x)|^{p}dx\right)^{1/q}.
$$
Then we divide both sides by $\phi(r)^{p/q}r^{Q/q}$ to get
$$
\frac{\left(\int_{|x|<r}|I_{\alpha,\gamma}f(x)|^{q}dx\right)^{\frac{1}{q}}}
{\psi(r)r^{Q/q}}\leq C\|K_{\alpha,\gamma}\|_{L^{p_{2},p_{1}}(\mathbb{G})}\|f\|_{L^{p, \phi}(\mathbb{G})}^{1-p/q}
\frac{\left(\int_{|x|<r}|Mf(x)|^{p}dx\right)^{1/q}}{\phi(r)^{p/q}r^{Q/q}},
$$
where $\psi(r)=\phi(r)^{p/q}$. Taking the supremum over $r>0$ and then using \eqref{Nakai}, we obtain the following desired result
\begin{multline*}
\|I_{\alpha,\gamma}f\|_{L^{q, \psi}(\mathbb{G})}\leq C\|K_{\alpha,\gamma}\|_{L^{p_{2},p_{1}}(\mathbb{G})}\|f\|_{L^{p, \phi}(\mathbb{G})}^{1-p/q}
\|Mf\|_{L^{p, \phi}(\mathbb{G})}^{p/q}\\
\leq C_{p, \phi, Q}\|K_{\alpha,\gamma}\|_{L^{p_{2},p_{1}}(\mathbb{G})}\|f\|_{L^{p, \phi}(\mathbb{G})},
\end{multline*}
completing the proof.
\end{proof}
By Lemma \ref{incul_Mor_lem}, we note that Theorem \ref{BRth2} implies Theorem \ref{BRth1}
$$\|I_{\alpha,\gamma}f\|_{L^{q, \psi}(\mathbb{G})}\leq C\|K_{\alpha,\gamma}\|_{L^{p_{2},p_{1}}(\mathbb{G})}\|f\|_{L^{p, \phi}(\mathbb{G})}
\leq C\|K_{\alpha,\gamma}\|_{L^{p_{1}}(\mathbb{G})}\|f\|_{L^{p, \phi}(\mathbb{G})}.$$
In order to improve our results, we present the following lemma, which states that the kernel $K_{\alpha,\gamma}$ belongs to the generalised Morrey space $L^{p_{2},\omega}(\mathbb{G})$ for some $p_{2}\geq1$ and some function $\omega$.

\begin{lem}\label{BRlem1}
Let $\mathbb{G}$ be a homogeneous group
of homogeneous dimension $Q$. Let $\gamma>0$, $p_{2}\geq1$ and $0<\alpha<Q$. If $\omega:\mathbb{R}^{+}\rightarrow\mathbb{R}^{+}$ satisfies
\begin{equation}\label{GM5}\int_{0<r\leq R}r^{(\alpha-Q)p_{2}+Q-1}dr\leq C\omega^{p_{2}}(R)R^{Q}\end{equation}
for every $R>0$, then $K_{\alpha,\gamma}\in L^{p_{2},\omega}(\mathbb{G})$.
\end{lem}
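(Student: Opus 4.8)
The plan is to estimate the generalised Morrey norm of $K_{\alpha,\gamma}$ directly from the definition \eqref{GMorrey}, reducing everything to a one-dimensional radial integral and then invoking the hypothesis \eqref{GM5}.

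First I would pass to polar coordinates exactly as in the proof of Theorem \ref{Pre1}: using \eqref{EQ:polar} with $(r,y)=(|x|,x/|x|)$, for any $R>0$ one has
$$\int_{B(0,R)}|K_{\alpha,\gamma}(x)|^{p_{2}}dx=\int_{B(0,R)}\frac{|x|^{(\alpha-Q)p_{2}}}{(1+|x|)^{\gamma p_{2}}}dx=|\sigma|\int_{0}^{R}\frac{r^{(\alpha-Q)p_{2}+Q-1}}{(1+r)^{\gamma p_{2}}}dr,$$
where $|\sigma|$ is the surface measure of the unit sphere $\wp$. Next, since $\gamma>0$ we have $(1+r)^{\gamma p_{2}}\geq1$ for all $r\geq0$, so the denominator may simply be discarded, giving
$$\int_{B(0,R)}|K_{\alpha,\gamma}(x)|^{p_{2}}dx\leq|\sigma|\int_{0}^{R}r^{(\alpha-Q)p_{2}+Q-1}dr\leq C|\sigma|\,\omega^{p_{2}}(R)R^{Q},$$
where the last inequality is precisely the hypothesis \eqref{GM5}.

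Dividing by $\omega^{p_{2}}(R)R^{Q}$, taking the $p_{2}$-th root and then the supremum over $R>0$ would then yield
$$\|K_{\alpha,\gamma}\|_{L^{p_{2},\omega}(\mathbb{G})}=\sup_{R>0}\frac{1}{\omega(R)}\left(\frac{1}{R^{Q}}\int_{B(0,R)}|K_{\alpha,\gamma}(x)|^{p_{2}}dx\right)^{1/p_{2}}\leq(C|\sigma|)^{1/p_{2}}<\infty,$$
so that $K_{\alpha,\gamma}\in L^{p_{2},\omega}(\mathbb{G})$, as claimed.

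There is essentially no hard step here: the argument is a one-sided estimate in which the Bessel factor $(1+|x|)^{-\gamma}$ is controlled trivially and the entire content is transferred into the assumption \eqref{GM5} on $\omega$. The only point that deserves attention is the convergence of the radial integral near $r=0$, which forces the exponent to satisfy $(\alpha-Q)p_{2}+Q-1>-1$, i.e. $p_{2}<\frac{Q}{Q-\alpha}$; this is exactly the range in which the left-hand side of \eqref{GM5} is finite, so it is implicitly built into the hypothesis (and matches the admissible range $\frac{Q}{Q+\gamma-\alpha}<p_{2}<\frac{Q}{Q-\alpha}$ appearing in the subsequent applications).
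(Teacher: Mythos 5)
Your proof is correct and follows essentially the same route as the paper's: polar decomposition of the integral of $K_{\alpha,\gamma}^{p_{2}}$ over $B(0,R)$, discarding the factor $(1+r)^{-\gamma p_{2}}\leq 1$, invoking the hypothesis \eqref{GM5}, and then dividing by $\omega^{p_{2}}(R)R^{Q}$ and taking the supremum over $R>0$. Your closing remark on the implicit range $p_{2}<\frac{Q}{Q-\alpha}$ forced by convergence of the radial integral near zero is a correct observation that the paper leaves unstated.
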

\begin{proof}[Proof of Lemma \ref{BRlem1}] Here, it is sufficient to evaluate the following integral around zero
$$\int_{|x|\leq R}K_{\alpha,\gamma}^{p_{2}}(x)dx=\int_{|x|\leq R}\frac{|x|^{(\alpha-Q)p_{2}}}{(1+|x|)^{\gamma p_{2}}}dx\leq
|\sigma|\int_{0<r\leq R}r^{(\alpha-Q)p_{2}+Q-1}dr\leq C\omega^{p_{2}}(R)R^{Q}.$$
By dividing both sides of this inequality by $\omega^{p_{2}}(R)R^{Q}$ and taking $p_{2}^{th}$-root, we obtain
$$\frac{\left(\int_{|x|\leq R}K_{\alpha,\gamma}^{p_{2}}(x)dx\right)^{1/p_{2}}}{\omega(R)R^{Q/p_{2}}}\leq C^{1/p_{2}}.$$
Then, we take the supremum over $R>0$ to get
$$\sup_{R>0}\frac{\left(\int_{|x|\leq R}K_{\alpha,\gamma}^{p_{2}}(x)dx\right)^{1/p_{2}}}{\omega(R)R^{Q/p_{2}}}<\infty,$$
which implies $K_{\alpha,\gamma}\in L^{p_{2},\omega}(\mathbb{G})$.
\end{proof}

\begin{thm}\label{BRth3}
Let $\mathbb{G}$ be a homogeneous group
of homogeneous dimension $Q$. Let $|\cdot|$ be a homogeneous quasi-norm. Let $\omega:\mathbb{R}^{+}\rightarrow\mathbb{R}^{+}$ satisfy the doubling condition and assume that $\omega(r)\leq Cr^{-\alpha}$ for every $r>0$, so that $K_{\alpha,\gamma}\in L^{p_{2}, \omega}(\mathbb{G})$ for $\frac{Q}{Q+\gamma-\alpha}<p_{2}<\frac{Q}{Q-\alpha}$ and $p_{2}\geq1$, where $0<\alpha<Q$ and $\gamma>0$. If $\phi(r)\leq Cr^{\beta}$ for every $r>0$, where $\beta<-\alpha<-Q-\beta$, then for all $f\in L^{p, \phi}(\mathbb{G})$ we have
\begin{equation}\label{GMth3}\|I_{\alpha,\gamma}f\|_{L^{q, \psi}(\mathbb{G})}\leq C_{p, \phi, Q}\|K_{\alpha,\gamma}\|_{L^{p_{2},\omega}(\mathbb{G})}\|f\|_{L^{p, \phi}(\mathbb{G})},
\end{equation}
where $1< p<\infty, q=\frac{\beta p}{\beta+Q-\alpha}, \psi(r)=\phi(r)^{p/q}$.
\end{thm}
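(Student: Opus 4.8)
The plan is to follow the dyadic splitting already used in Theorems \ref{BRth1} and \ref{BRth2}, the only new feature being that the finer generalised Morrey norm $\|K_{\alpha,\gamma}\|_{L^{p_{2},\omega}(\mathbb{G})}$ (finite by assumption) must be fed into both parts of the decomposition. For a parameter $R>0$ to be fixed at the very end I would write $I_{\alpha,\gamma}f(x)=I_{1}(x)+I_{2}(x)$, where $I_{1}$ is the integral over $B(x,R)$ and $I_{2}$ the integral over $B^{c}(x,R)$. The term $I_{1}$ will be controlled by the Hardy-Littlewood maximal function $Mf$, the term $I_{2}$ by the Morrey norm of $f$; then I balance the two bounds in $R$ and finish with the maximal inequality \eqref{Nakai}.

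For $I_{1}$ I would estimate $\int_{2^{k}R\le|xy^{-1}|<2^{k+1}R}|f(y)|dy\le C(2^{k}R)^{Q}Mf(x)$ on each dyadic shell, factor $(2^{k}R)^{Q}=(2^{k}R)^{Q/p_{2}}(2^{k}R)^{Q/p_{2}^{'}}$, and apply the H\"older inequality for $\frac{1}{p_{2}}+\frac{1}{p_{2}^{'}}=1$ to the sum over $k\le-1$ exactly as in \eqref{sum_equiv1}. The negative-index partial sum $\left(\sum_{k\le-1}\frac{(2^{k}R)^{(\alpha-Q)p_{2}+Q}}{(1+2^{k}R)^{\gamma p_{2}}}\right)^{1/p_{2}}$ corresponds to $\left(\int_{B(0,R)}K_{\alpha,\gamma}^{p_{2}}\right)^{1/p_{2}}$ by the computation of Theorem \ref{Pre1}, and the definition of $\|\cdot\|_{L^{p_{2},\omega}(\mathbb{G})}$ bounds the latter by $\omega(R)R^{Q/p_{2}}\|K_{\alpha,\gamma}\|_{L^{p_{2},\omega}(\mathbb{G})}$. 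Multiplying by the remaining factor $R^{Q/p_{2}^{'}}$ and using $\omega(R)\le CR^{-\alpha}$ then gives
$$|I_{1}(x)|\le C\|K_{\alpha,\gamma}\|_{L^{p_{2},\omega}(\mathbb{G})}Mf(x)R^{Q-\alpha}.$$

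For $I_{2}$ I would apply the H\"older inequality for $\frac{1}{p}+\frac{1}{p^{'}}=1$ together with $\left(\int_{2^{k}R\le|xy^{-1}|<2^{k+1}R}|f|^{p}\right)^{1/p}\le C\phi(2^{k}R)(2^{k}R)^{Q/p}\|f\|_{L^{p,\phi}(\mathbb{G})}$, reducing $I_{2}$ to $C\|f\|_{L^{p,\phi}(\mathbb{G})}\sum_{k\ge0}\frac{(2^{k}R)^{\alpha}\phi(2^{k}R)}{(1+2^{k}R)^{\gamma}}$. To recover the kernel norm I would reinsert $K_{\alpha,\gamma}$ via the equivalence \eqref{sum_equiv2}, bound the shell integral $\left(\int_{2^{k}R\le|xy^{-1}|<2^{k+1}R}K_{\alpha,\gamma}^{p_{2}}\right)^{1/p_{2}}$ by $\left(\int_{B(0,2^{k+1}R)}K_{\alpha,\gamma}^{p_{2}}\right)^{1/p_{2}}$, and use the definition of $\|\cdot\|_{L^{p_{2},\omega}(\mathbb{G})}$ together with the doubling of $\omega$ to obtain $\frac{(2^{k}R)^{\alpha}}{(1+2^{k}R)^{\gamma}}\le C\omega(2^{k}R)(2^{k}R)^{Q}\|K_{\alpha,\gamma}\|_{L^{p_{2},\omega}(\mathbb{G})}$. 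Inserting $\omega(r)\le Cr^{-\alpha}$ and $\phi(r)\le Cr^{\beta}$ collapses the sum to the geometric series $\sum_{k\ge0}(2^{k}R)^{Q-\alpha+\beta}$, which converges precisely because the hypothesis $-\alpha<-Q-\beta$ is equivalent to $Q-\alpha+\beta<0$; this yields
$$|I_{2}(x)|\le C\|K_{\alpha,\gamma}\|_{L^{p_{2},\omega}(\mathbb{G})}\|f\|_{L^{p,\phi}(\mathbb{G})}R^{Q-\alpha+\beta}.$$

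Adding the two estimates, the bounds share the factor $R^{Q-\alpha}$ and differ only by $R^{\beta}$, so I would choose $R$ with $R^{\beta}=Mf(x)/\|f\|_{L^{p,\phi}(\mathbb{G})}$ (permissible wherever $Mf(x)$ is finite and nonzero) to balance them. Since the resulting exponent $1+\frac{Q-\alpha}{\beta}$ of $Mf$ equals $p/q$ for $q=\frac{\beta p}{\beta+Q-\alpha}$ --- which is exactly what fixes $q$ --- this produces the pointwise inequality
$$|I_{\alpha,\gamma}f(x)|\le C\|K_{\alpha,\gamma}\|_{L^{p_{2},\omega}(\mathbb{G})}\|f\|_{L^{p,\phi}(\mathbb{G})}^{1-p/q}(Mf(x))^{p/q}.$$
Raising to the power $q$, integrating over $B(0,r)$, dividing by $\psi(r)r^{Q/q}=\phi(r)^{p/q}r^{Q/q}$ and taking $\sup_{r>0}$ gives $\|I_{\alpha,\gamma}f\|_{L^{q,\psi}(\mathbb{G})}\le C\|K_{\alpha,\gamma}\|_{L^{p_{2},\omega}(\mathbb{G})}\|f\|_{L^{p,\phi}(\mathbb{G})}^{1-p/q}\|Mf\|_{L^{p,\phi}(\mathbb{G})}^{p/q}$, and the maximal inequality \eqref{Nakai} turns the last factor into $\|f\|_{L^{p,\phi}(\mathbb{G})}^{p/q}$, giving \eqref{GMth3}. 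The step I expect to require the most care is the passage from the pointwise kernel factors to $\|K_{\alpha,\gamma}\|_{L^{p_{2},\omega}(\mathbb{G})}$: unlike in Theorems \ref{BRth1}--\ref{BRth2}, where a single homogeneity exponent sufficed, here one must genuinely use both the doubling and the decay $\omega(r)\le Cr^{-\alpha}$ of $\omega$, and it is this decay combined with $\phi(r)\le Cr^{\beta}$ that forces the convergence condition $Q-\alpha+\beta<0$ while simultaneously pinning down $q$.
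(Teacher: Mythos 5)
Your proposal is correct and takes essentially the same route as the paper's own proof: the same splitting $I_{\alpha,\gamma}f=I_{1}+I_{2}$ over $B(x,R)$ and $B^{c}(x,R)$, the same dyadic estimates via \eqref{sum_equiv1} and \eqref{sum_equiv2} that feed $\|K_{\alpha,\gamma}\|_{L^{p_{2},\omega}(\mathbb{G})}$ (using the doubling of $\omega$ and $\omega(r)\leq Cr^{-\alpha}$) into both pieces to get $R^{Q-\alpha}Mf(x)$ and $R^{Q-\alpha+\beta}\|f\|_{L^{p,\phi}(\mathbb{G})}$, the same balancing choice $R^{\beta}=Mf(x)/\|f\|_{L^{p,\phi}(\mathbb{G})}$ pinning down $q=\frac{\beta p}{\beta+Q-\alpha}$, and the same conclusion via the maximal inequality \eqref{Nakai}. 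The only cosmetic difference is that you state the kernel-reinsertion step as the explicit pointwise bound $\frac{(2^{k}R)^{\alpha}}{(1+2^{k}R)^{\gamma}}\leq C\,\omega(2^{k}R)(2^{k}R)^{Q}\|K_{\alpha,\gamma}\|_{L^{p_{2},\omega}(\mathbb{G})}$, which the paper records equivalently as the bound $\bigl(\int_{2^{k}R\leq|xy^{-1}|<2^{k+1}R}K_{\alpha,\gamma}^{p_{2}}(xy^{-1})dy\bigr)^{1/p_{2}}\lesssim\|K_{\alpha,\gamma}\|_{L^{p_{2},\omega}(\mathbb{G})}\,\omega(2^{k}R)(2^{k}R)^{Q/p_{2}}$.
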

\begin{proof}[Proof of Theorem \ref{BRth3}] As in the proof of Theorem \ref{BRth1}, we write
$$I_{\alpha,\gamma}f(x):=I_{1}(x)+I_{2}(x),$$
where $I_{1}(x):=\int_{B(x,R)}\frac{|xy^{-1}|^{\alpha-Q}f(y)}{(1+|xy^{-1}|)^{\gamma}}dy$ and $I_{2}(x):=\int_{B^{c}(x,R)}\frac{|xy^{-1}|^{\alpha-Q}f(y)}{(1+|xy^{-1}|)^{\gamma}}dy$, $R>0$.

First, we estimate $I_{1}$ by using the dyadic decomposition
\begin{align*}|I_{1}(x)|&\leq\sum_{k=-\infty}^{-1}\int_{2^{k}R\leq |xy^{-1}|<2^{k+1}R}\frac{|xy^{-1}|^{\alpha-Q}|f(y)|}{(1+|xy^{-1}|)^{\gamma}}dy\\&
\leq \sum_{k=-\infty}^{-1}\frac{(2^{k}R)^{\alpha-Q}}{(1+2^{k}R)^{\gamma}}\int_{2^{k}R\leq |xy^{-1}|<2^{k+1}R}
|f(y)|dy\\&
\leq CMf(x)\sum_{k=-\infty}^{-1}\frac{(2^{k}R)^{\alpha-Q+Q/p_{2}}(2^{k}R)^{Q/p_{2}^{'}}}{(1+2^{k}R)^{\gamma}}.
\end{align*}
From this using the H\"{o}lder inequality for $\frac{1}{p_{2}}+\frac{1}{p_{2}^{'}}=1$, we get
$$|I_{1}(x)|\leq CMf(x)\left(\sum_{k=-\infty}^{-1}\frac{(2^{k}R)^{(\alpha-Q)p_{2}+Q}}{(1+2^{k}R)^{\gamma p_{2}}}\right)^{1/p_{2}}\left(\sum_{k=-\infty}^{-1}(2^{k}R)^{Q}\right)^{1/p_{2}^{'}}.$$
By \eqref{sum_equiv1} we have
\begin{align*}|I_{1}(x)|&\leq CMf(x)\left(\int_{0<|x|<R}K_{\alpha,\gamma}^{p_{2}}(x)dx\right)^{\frac{1}{p_{2}}}
R^{Q/p_{2}^{'}}\\&\leq C\|K_{\alpha,\gamma}\|_{L^{p_{2},\omega}(\mathbb{G})}Mf(x)\omega(R)R^{Q},\end{align*}
and using $\omega(r)\leq Cr^{-\alpha}$, we arrive at
\begin{equation}\label{GM6}
|I_{1}(x)|\leq C
\|K_{\alpha,\gamma}\|_{L^{p_{2},\omega}(\mathbb{G})}Mf(x)R^{Q-\alpha}.\end{equation}
Now let us estimate the second term $I_{2}$:
\begin{align*}
|I_{2}(x)|&\leq \sum_{k=0}^{\infty}\frac{(2^{k}R)^{\alpha-Q}}{(1+2^{k}R)^{\gamma}}
\int_{2^{k}R\leq|xy^{-1}|<2^{k+1}R}|f(y)|dy\\&
\leq C\sum_{k=0}^{\infty}\frac{(2^{k}R)^{\alpha-Q}}{(1+2^{k}R)^{\gamma}}(2^{k}R)^{Q/p^{'}}
\left(\int_{2^{k}R\leq|xy^{-1}|<2^{k+1}R}|f(y)|^{p}dy\right)^{1/p}\\&
\leq C\|f\|_{L^{p, \phi}(\mathbb{G})}\sum_{k=0}^{\infty}
\frac{(2^{k}R)^{\alpha}\phi(2^{k}R)}{(1+2^{k}R)^{\gamma}}
\frac{\left(\int_{2^{k}R\leq|xy^{-1}|<2^{k+1}R}
dy\right)^{1/p_{2}}}{(2^{k}R)^{Q/p_{2}}},
\end{align*}
where we have used that $\left(\int_{2^{k}R\leq|xy^{-1}|<2^{k+1}R}
dy\right)^{1/p_{2}}\sim (2^{k}R)^{Q/p_{2}}$.
Using \eqref{sum_equiv2} we obtain
\begin{align*}|I_{2}(x)|&\leq C\|f\|_{L^{p, \phi}(\mathbb{G})}\sum_{k=0}^{\infty}
\frac{(2^{k}R)^{\alpha}\phi(2^{k}R)}{(2^{k}R)^{\alpha-Q}}\frac{\left(\int_{2^{k}R\leq|xy^{-1}|<2^{k+1}R}
K_{\alpha,\gamma}^{p_{2}} (xy^{-1})dy\right)^{1/p_{2}}}{(2^{k}R)^{Q/p_{2}}}.\end{align*}
Taking into account that $\phi(r)\leq Cr^{\beta}$ and $\omega(r)\leq Cr^{-\alpha}$ for every $r>0$, we have
\begin{align*}|I_{2}(x)|&\leq C\|f\|_{L^{p, \phi}(\mathbb{G})}\sum_{k=0}^{\infty}
(2^{k}R)^{Q-\alpha+\beta}\frac{\left(\int_{2^{k}R\leq|xy^{-1}|<2^{k+1}R}
K_{\alpha,\gamma}^{p_{2}} (xy^{-1})dy\right)^{1/p_{2}}}{\omega(2^{k}R)(2^{k}R)^{Q/p_{2}}}.\end{align*}
Since we have
$$\frac{\left(\int_{2^{k}R\leq|xy^{-1}|<2^{k+1}R}
K_{\alpha,\gamma}^{p_{2}} (xy^{-1})dy\right)^{1/p_{2}}}{\omega(2^{k}R)(2^{k}R)^{Q/p_{2}}}
\lesssim \|K_{\alpha,\gamma}\|_{L^{p_{2},\omega}(\mathbb{G})}$$ for every $k=0,1,2,...,$ it follows that
$$|I_{2}(x)|\leq C\|K_{\alpha,\gamma}\|_{L^{p_{2},\omega}(\mathbb{G})}\|f\|_{L^{p, \phi}(\mathbb{G})}\sum_{k=0}^{\infty}
(2^{k}R)^{Q-\alpha+\beta},$$
and since $Q-\alpha+\beta<0$, it implies that
\begin{equation}\label{GM7}
|I_{2}(x)|\leq C\|K_{\alpha,\gamma}\|_{L^{p_{2},\omega}(\mathbb{G})}\|f\|_{L^{p, \phi}(\mathbb{G})}R^{Q-\alpha+\beta}.
\end{equation}
Summing up the estimates \eqref{GM6} and \eqref{GM7}, we have
\begin{align*}|I_{\alpha,\gamma}f(x)|&\leq C\|K_{\alpha,\gamma}\|_{L^{p_{2},\omega}(\mathbb{G})}(Mf(x)R^{Q-\alpha}+\|f\|_{L^{p, \phi}(\mathbb{G})}R^{Q-\alpha+\beta}).\end{align*}
Assuming that $f$ is not identically $0$ and that $Mf$ is finite everywhere, we can choose $R>0$ such that $R^{\beta}=\frac{Mf(x)}{\|f\|_{L^{p, \phi}(\mathbb{G})}}$, that is
$$|I_{\alpha,\gamma}f(x)|\leq C\|K_{\alpha,\gamma}\|_{L^{p_{2},\omega}(\mathbb{G})}\|f\|_{L^{p, \phi}(\mathbb{G})}^{(\alpha-Q)/\beta}(Mf(x))^{1+(Q-\alpha)/\beta}.$$
Now by putting $q=\frac{\beta p}{\beta+Q-\alpha}$, for any $r>0$ we get
$$\left(\int_{|x|<r}|I_{\alpha,\gamma}f(x)|^{q}dx\right)^{\frac{1}{q}}\leq
C\|K_{\alpha,\gamma}\|_{L^{p_{2},\omega}(\mathbb{G})}\|f\|_{L^{p, \phi}(\mathbb{G})}^{1-p/q}\left(\int_{|x|<r}|Mf(x)|^{p}dx\right)^{1/q}.$$
Then we divide both sides by $\phi(r)^{p/q}r^{Q/q}$ to get
$$\frac{\left(\int_{|x|<r}|I_{\alpha,\gamma}f(x)|^{q}dx\right)^{\frac{1}{q}}}
{\psi(r)r^{Q/q}}\leq C\|K_{\alpha,\gamma}\|_{L^{p_{2},\omega}(\mathbb{G})}\|f\|_{L^{p, \phi}(\mathbb{G})}^{1-p/q}
\frac{\left(\int_{|x|<r}|Mf(x)|^{p}dx\right)^{1/q}}{\phi(r)^{p/q}r^{Q/q}},$$
where $\psi(r)=\phi(r)^{p/q}$. Finally, taking the supremum over $r>0$ and using \eqref{Nakai}, we obtain the desired result
\begin{multline*}
\|I_{\alpha,\gamma}f\|_{L^{q, \psi}(\mathbb{G})}\leq C\|K_{\alpha,\gamma}\|_{L^{p_{2},\omega}(\mathbb{G})}\|f\|_{L^{p, \phi}(\mathbb{G})}^{1-p/q}
\|Mf\|_{L^{p, \phi}(\mathbb{G})}^{p/q}\\
\leq C_{p, \phi, Q}\|K_{\alpha,\gamma}\|_{L^{p_{2},\omega}(\mathbb{G})}\|f\|_{L^{p, \phi}(\mathbb{G})},
\end{multline*}
completing the proof.
\end{proof}

\begin{rem}
We note that Theorems \ref{BRth1}, \ref{BRth2}, and \ref{BRth3} imply the results on the boundedness of Bessel-Riesz operators in Morrey spaces on homogeneous groups.  As in the Abelian case \cite{IGE16}, our results ensure that $I_{\alpha,\gamma}:L^{p,\phi}(\mathbb{G})\rightarrow L^{q,\phi^{p/q}}(\mathbb{G})$ is bounded. 
Indeed, if $\omega:\mathbb{R}^{+}\rightarrow \mathbb{R}^{+}$ satisfies conditions of Lemma \ref{BRlem1}, for $p_{1}\in\left(\frac{Q}{Q+\gamma-\alpha},\frac{Q}{Q-\alpha}\right)$ we have the inequality $R^{-Q/p_{1}}\leq \omega(R)$ for every $R>0$, and Theorem \ref{BRth3} gives a better estimate than Theorem \ref{BRth2}. For example, if we take $\omega(R):=(1+R^{Q/q_{1}})R^{-Q/p_{1}}$ for some $q_{1}>p_{1}$, then $\|K_{\alpha,\gamma}\|_{L^{p_{2},\omega}(\mathbb{G})}\leq\|K_{\alpha,\gamma}\|_{L^{p_{2},p_{1}}(\mathbb{G})}$. By Theorem \ref{BRth3} and Lemma \ref{incul_Mor_lem} we obtain
\begin{align*}
\|I_{\alpha,\gamma}f\|_{L^{q, \psi}(\mathbb{G})}&\leq C\|K_{\alpha,\gamma}\|_{L^{p_{2},\omega}(\mathbb{G})}\|f\|_{L^{p, \phi}(\mathbb{G})}\\
&\leq C\|K_{\alpha,\gamma}\|_{L^{p_{2},p_{1}}(\mathbb{G})}\|f\|_{L^{p, \phi}(\mathbb{G})}\\
&\leq C\|K_{\alpha,\gamma}\|_{L^{p_{1}}(\mathbb{G})}\|f\|_{L^{p, \phi}(\mathbb{G})}.
\end{align*}
Thus, we have shown that Theorem \ref{BRth3} gives the best estimate among the three. Moreover, it is shown that, in these estimates, the norm of Bessel-Riesz operators on generalised Morrey spaces is dominated by an appropriate norm of Bessel-Riesz kernels.
\end{rem}

\section{Inequalities for generalised Bessel-Riesz operator in generalised Morrey spaces}
\label{SEC:I_Gfracop}

In this section, we prove the boundedness of the generalised Bessel-Riesz operator $I_{\rho, \gamma}$ and establish Olsen type inequality for this operator in generalised Morrey spaces on homogeneous groups.

We define the generalised Bessel-Riesz operator $I_{\rho, \gamma}$ by
\begin{equation}\label{I_rho}
I_{\rho, \gamma}f(x):=\int_{\mathbb{G}}\frac{\rho(|xy^{-1}|)}{(1+|xy^{-1}|)^{\gamma}}f(y)dy,
\end{equation}
where $\gamma\geq0$, $\rho:\mathbb{R}^{+}\rightarrow\mathbb{R}^{+}$, $\rho$ satisfies the doubling condition \eqref{Pre02} and the following condition:
\begin{equation}\label{I_Pre01}
\int_{0}^{1}\frac{\rho(t)}{t^{\gamma-Q+1}}dt<\infty.
\end{equation} For $\rho(t)=t^{\alpha-Q}, 0<\alpha<Q$, we have the Bessel-Riesz kernel
$$I_{\rho, \gamma}=I_{\alpha, \gamma}=\frac{|xy^{-1}|^{\alpha-Q}}{(1+|xy^{-1}|)^{\gamma}}.$$

\begin{thm}\label{I_Gfracopthm}
Let $\mathbb{G}$ be a homogeneous group
of homogeneous dimension $Q$. Let $|\cdot|$ be a homogeneous quasi-norm and let $\gamma>0$. Let $\rho$ and $\phi$ satisfy the doubling condition \eqref{Pre02}. Let $\phi$ be surjective and for some $1<p<q<\infty$ satisfy
\begin{equation}\label{I_Gfracopthm1}\int_{r}^{\infty}\frac{\phi(t)^{p}}{t}dt\leq C_{1}\phi(r)^{p}, \end{equation}
and
\begin{equation}\label{I_Gfracopthm2}\phi(r)\int_{0}^{r}\frac{\rho(t)}{t^{\gamma-Q+1}}dt+\int_{r}^{\infty}
\frac{\rho(t)\phi(t)}{t^{\gamma-Q+1}}dt\leq C_{2} \phi(r)^{p/q}, 
\end{equation}
for all $r>0$. Then we have
\begin{equation}\label{I_Gfracopthm3}
\|I_{\rho, \gamma}f\|_{L^{q,\phi^{p/q}}(\mathbb{G})}\leq C_{p, q, \phi, Q}\|f\|_{L^{p,\phi}(\mathbb{G})}.
\end{equation}
\end{thm}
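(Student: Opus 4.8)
The plan is to adapt the near/far splitting and maximal-function optimisation used in the proofs of Theorems~\ref{BRth1}--\ref{BRth3}, replacing the explicit kernel $|xy^{-1}|^{\alpha-Q}$ by the general radial profile $\rho(|xy^{-1}|)$ and using the doubling conditions to turn the resulting dyadic sums into the two integrals that appear in \eqref{I_Gfracopthm2}. Fix $R>0$ and split
\begin{equation*}
I_{\rho,\gamma}f(x)=I_{1}(x)+I_{2}(x),\qquad I_{1}(x)=\int_{B(x,R)}\frac{\rho(|xy^{-1}|)}{(1+|xy^{-1}|)^{\gamma}}f(y)\,dy,
\end{equation*}
with $I_{2}$ the integral over $B^{c}(x,R)$.

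For the local part $I_{1}$ I would decompose $B(x,R)$ into the dyadic annuli $\{2^{k}R\le|xy^{-1}|<2^{k+1}R\}$ with $k\le-1$. On each annulus \eqref{Pre02} gives $\rho(|xy^{-1}|)\sim\rho(2^{k}R)$, while $(1+|xy^{-1}|)^{\gamma}\ge(2^{k}R)^{\gamma}$; estimating the annular integral of $|f|$ by $(2^{k}R)^{Q}Mf(x)$ and summing the geometrically controlled series converts it into an integral, so that
\begin{equation*}
|I_{1}(x)|\le C\,Mf(x)\int_{0}^{R}\frac{\rho(t)}{t^{\gamma-Q+1}}\,dt,
\end{equation*}
the right-hand integral being finite by \eqref{I_Pre01}. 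For the tail $I_{2}$ I would use the annuli with $k\ge0$, apply H\"older's inequality with exponent $p$ on each annulus, bound $\bigl(\int_{B(x,2^{k+1}R)}|f|^{p}\bigr)^{1/p}$ by $\phi(2^{k}R)(2^{k}R)^{Q/p}\|f\|_{L^{p,\phi}(\mathbb{G})}$ via the doubling of $\phi$, and again sum, obtaining
\begin{equation*}
|I_{2}(x)|\le C\,\|f\|_{L^{p,\phi}(\mathbb{G})}\int_{R}^{\infty}\frac{\rho(t)\phi(t)}{t^{\gamma-Q+1}}\,dt.
\end{equation*}

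The decisive step is the optimisation in $R$. Since $\phi$ is surjective and nonincreasing, for every $x$ with $0<Mf(x)<\infty$ I can choose $R=R(x)$ so that $\phi(R)=Mf(x)/\|f\|_{L^{p,\phi}(\mathbb{G})}$. Substituting $Mf(x)=\|f\|_{L^{p,\phi}(\mathbb{G})}\phi(R)$ into the estimate for $I_{1}$ and adding that for $I_{2}$, the resulting bracket is precisely the left-hand side of \eqref{I_Gfracopthm2}; that hypothesis then yields the pointwise inequality
\begin{equation*}
|I_{\rho,\gamma}f(x)|\le C\,\|f\|_{L^{p,\phi}(\mathbb{G})}\,\phi(R)^{p/q}=C\,\|f\|_{L^{p,\phi}(\mathbb{G})}^{1-p/q}\bigl(Mf(x)\bigr)^{p/q}.
\end{equation*}
Raising to the power $q$, integrating over $B(0,r)$, dividing by $\psi(r)^{q}r^{Q}=\phi(r)^{p}r^{Q}$ with $\psi=\phi^{p/q}$, and taking the supremum over $r>0$ reduces the left-hand side of \eqref{I_Gfracopthm3} to $C\,\|f\|_{L^{p,\phi}(\mathbb{G})}^{1-p/q}\|Mf\|_{L^{p,\phi}(\mathbb{G})}^{p/q}$, and Theorem~\ref{Nakaithm} then finishes the proof.

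I expect the principal technical obstacle to be the passage from the dyadic sums to the two integrals in \eqref{I_Gfracopthm2} with constants independent of $R$ and $x$: this is precisely where the doubling hypothesis \eqref{Pre02} on both $\rho$ and $\phi$ enters, together with the scalewise comparison $(1+t)^{\gamma}\ge t^{\gamma}$ and the integrability \eqref{I_Pre01} near the origin. A second delicate point is the legitimacy of the scale selection $R=R(x)$, which relies on the surjectivity of $\phi$ and requires handling separately the sets where $Mf(x)\in\{0,\infty\}$. Among the remaining hypotheses, \eqref{I_Gfracopthm2} is the one that makes the optimisation close, while the Nakai-type regularity condition \eqref{I_Gfracopthm1} serves as the background growth assumption on $\phi$, ensuring that the averages of $f$ remain comparable across dyadic scales and that all the integrals in play are finite.
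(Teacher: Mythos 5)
Your proposal is correct and follows essentially the same route as the paper's proof: the same split into $I_{1}$ over $B(x,R)$ and $I_{2}$ over $B^{c}(x,R)$, the same dyadic decomposition with the doubling condition \eqref{Pre02} converting the sums into the two integrals of \eqref{I_Gfracopthm2}, the same choice of $R$ via surjectivity of $\phi$ with $\phi(R)=Mf(x)\,\|f\|_{L^{p,\phi}(\mathbb{G})}^{-1}$, and the same conclusion through Theorem \ref{Nakaithm}. The only cosmetic difference is that you substitute the optimal $R$ before invoking \eqref{I_Gfracopthm2} on the combined bracket, whereas the paper applies \eqref{I_Gfracopthm2} to each term separately first; the resulting pointwise bound $|I_{\rho,\gamma}f(x)|\leq C\,Mf(x)^{p/q}\|f\|_{L^{p,\phi}(\mathbb{G})}^{(q-p)/q}$ is identical.
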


\begin{proof}[Proof of Theorem \ref{I_Gfracopthm}] For every $R>0$, let us write $I_{\rho, \gamma}f(x)$ in the form
$$I_{\rho, \gamma}f(x)=I_{1, \rho}(x)+I_{2, \rho}(x),$$
where $I_{1,\rho}(x):=\int_{B(x,R)}\frac{\rho(|xy^{-1}|)}{(1+|xy^{-1}|)^{\gamma}}f(y)dy$ and $I_{2,\rho}(x):=\int_{B^{c}(x,R)}\frac{\rho(|xy^{-1}|)}{(1+|xy^{-1}|)^{\gamma}}f(y)dy$.
For $I_{1,\rho}(x)$, we have
\begin{align*}
|I_{1,\rho}(x)|&\leq\int_{|xy^{-1}|<R}\frac{\rho(|xy^{-1}|)}{(1+|xy^{-1}|)^{\gamma}}|f(y)|dy
\leq\int_{|xy^{-1}|<R}\frac{\rho(|xy^{-1}|)}{|xy^{-1}|^{\gamma}}|f(y)|dy
\\ &= \sum_{k=-\infty}^{-1}\int_{2^{k}R\leq|xy^{-1}|<2^{k+1}R}\frac{\rho(|xy^{-1}|)}{|xy^{-1}|^{\gamma}}|f(y)|dy.
\end{align*}
By virtue of \eqref{Pre02}, we get
\begin{align*}
|I_{1,\rho}(x)|&\leq C \sum_{k=-\infty}^{-1}\frac{\rho(2^{k}R)}{(2^{k}R)^{\gamma}}\int_{|xy^{-1}|<2^{k+1}R}|f(y)|dy
\\ & \leq C Mf(x) \sum_{k=-\infty}^{-1}\frac{\rho(2^{k}R)}{(2^{k}R)^{\gamma-Q}}
\\ & \leq C Mf(x) \sum_{k=-\infty}^{-1}\int_{2^{k}R}^{2^{k+1}R}\frac{\rho(t)}{t^{\gamma-Q+1}}dt
\\ & = C Mf(x) \int_{0}^{R}\frac{\rho(t)}{t^{\gamma-Q+1}}dt,
\end{align*}
where we have used the fact that
\begin{equation}\label{doubling2}\int_{2^{k}R}^{2^{k+1}R}\frac{\rho(t)}{t^{\gamma-Q+1}}dt\geq C\frac{\rho(2^{k}R)}{(2^{k}R)^{\gamma-Q+1}}2^{k}R\geq C\frac{\rho(2^{k}R)}{(2^{k}R)^{\gamma-Q}}.
\end{equation}
Now, using \eqref{I_Gfracopthm2}, we obtain
\begin{equation}\label{I_Gfracopt1}
|I_{1,\rho}(x)| \leq C Mf(x) \phi(R)^{(p-q)/q}.
\end{equation}
For $I_{2,\rho}(x)$, we have
\begin{align*}
|I_{2,\rho}(x)|&\leq\int_{|xy^{-1}|\geq R}\frac{\rho(|xy^{-1}|)}{(1+|xy^{-1}|)^{\gamma}}|f(y)|dy\leq\int_{|xy^{-1}|\geq R}\frac{\rho(|xy^{-1}|)}{|xy^{-1}|^{\gamma}}|f(y)|dy
\\ &= \sum_{k=0}^{\infty}\int_{2^{k}R\leq|xy^{-1}|<2^{k+1}R}\frac{\rho(|xy^{-1}|)}{|xy^{-1}|^{\gamma}}|f(y)|dy.
\end{align*}
Applying \eqref{Pre02}, we get
$$|I_{2,\rho}(x)|\leq C\sum_{k=0}^{\infty}\frac{\rho(2^{k}R)}{(2^{k}R)^{\gamma}}\int_{|xy^{-1}|<2^{k+1}R}|f(y)|dy.$$
From this using the H\"{o}lder inequality, we obtain
\begin{align*}|I_{2,\rho}(x)|&\leq C \sum_{k=0}^{\infty}\frac{\rho(2^{k}R)}{(2^{k}R)^{\gamma}}
\left(\int_{|xy^{-1}|<2^{k+1}R}dy\right)^{1-\frac{1}{p}}
\left(\int_{|xy^{-1}|<2^{k+1}R}|f(y)|dy\right)^{\frac{1}{p}}\\&
\leq C\sum_{k=0}^{\infty}\frac{\rho(2^{k}R)}{(2^{k}R)^{\gamma-Q+\frac{Q}{p}}}
\left(\int_{|xy^{-1}|<2^{k+1}R}|f(y)|dy\right)^{\frac{1}{p}} \\ &
\leq C \|f\|_{L^{p,\phi}(\mathbb{G})} \sum_{k=0}^{\infty}\frac{\rho(2^{k+1}R)\phi(2^{k+1}R)}{(2^{k}R)^{\gamma-Q}}
\\ & \leq C  \|f\|_{L^{p,\phi}(\mathbb{G})} \sum_{k=0}^{\infty}\int_{2^{k}R}^{2^{k+1}R}\frac{\rho(t)\phi(t)}{t^{\gamma-Q+1}}dt
\\ & = C \|f\|_{L^{p,\phi}(\mathbb{G})} \int_{R}^{\infty}\frac{\rho(t)\phi(t)}{t^{\gamma-Q+1}}dt,
\end{align*}
where we have used the fact that
$$\int_{2^{k}R}^{2^{k+1}R}\frac{\rho(t)\phi(t)}{t^{\gamma-Q+1}}dt\geq C\frac{\rho(2^{k+1}R)\phi(2^{k+1}R)}{(2^{k+1}R)^{\gamma-Q+1}}2^{k}R\geq C\frac{\rho(2^{k+1}R)\phi(2^{k+1}R)}{(2^{k}R)^{\gamma-Q}}.$$
Now, using \eqref{I_Gfracopthm2}, we obtain
\begin{equation}\label{I_Gfracopt2} |I_{2,\rho}(x)|\leq C \|f\|_{L^{p,\phi}(\mathbb{G})}\phi(R)^{p/q}.
\end{equation}
Summing the two estimates \eqref{I_Gfracopt1} and \eqref{I_Gfracopt2}, we arrive at
$$|I_{\rho, \gamma}f(x)|\leq C
(Mf(x)\phi(R)^{(p-q)/q}+\|f\|_{L^{p,\phi}(\mathbb{G})}\phi(R)^{p/q}).$$
Assuming that $f$ is not identically $0$ and that $Mf$ is finite everywhere and then using the fact that $\phi$ is surjective, we can choose $R>0$ such that $\phi(R)=Mf(x)\cdot\|f\|_{L^{p,\phi}(\mathbb{G})}^{-1}$. Thus, for every $x\in \mathbb{G}$, we have
$$|I_{\rho, \gamma}f(x)|\leq C
Mf(x)^{\frac{p}{q}}\|f\|_{L^{p,\phi}(\mathbb{G})}^{\frac{q-p}{q}}.$$
It follows that
$$\left(\int_{B(0,r)}|I_{\rho, \gamma}f(x)|^{q}\right)^{1/q}\leq C
\left(\int_{B(0,r)}|Mf(x)|^p\right)^{1/q}\|f\|_{L^{p,\phi}(\mathbb{G})}^{\frac{q-p}{q}},$$
then we divide both sides by $\phi(r)^{p/q}r^{Q/q}$ to get
\begin{align*}\frac{1}{\phi(r)^{p/q}}\left(\frac{1}{r^{Q}}\int_{B(0,r)}|I_{\rho, \gamma}f(x)|^{q}\right)^{1/q}\leq C
\frac{1}{\phi(r)^{p/q}}\left(\frac{1}{r^{Q}}\int_{B(0,r)}|Mf(x)|^p\right)^{1/q}\|f\|_{L^{p,\phi}(\mathbb{G})}^{\frac{q-p}{q}}.
\end{align*}
Taking the supremum over $r>0$ and using the boundedness of the maximal operator $M$ on $L^{p,\phi}(\mathbb{G})$ from \eqref{Nakai}, we obtain
$$\|I_{\rho, \gamma}f\|_{L^{q,\phi^{p/q}}(\mathbb{G})}\leq C_{p, q, \phi, Q}\|f\|_{L^{p,\phi}(\mathbb{G})}.$$
This completes the proof.
\end{proof}

Now let show the Olsen type inequalities for the generalised Bessel-Riesz operator $I_{\rho, \gamma}$.
\begin{thm}\label{I_Olsenthm}
Let $\mathbb{G}$ be a homogeneous group
of homogeneous dimension $Q$. Let $|\cdot|$ be a homogeneous quasi-norm and let $\gamma>0$. Let $\rho$ and $\phi$ satisfy the doubling condition \eqref{Pre02}. Let $\phi$ be surjective and satisfy \eqref{I_Gfracopthm1}-\eqref{I_Gfracopthm2}. Then we have
\begin{equation}\label{I_Olsenthm1}\|W\cdot I_{\rho, \gamma}f\|_{L^{p,\phi}(\mathbb{G})}\leq C_{p, \phi, Q}\|W\|_{L^{p_{2},\phi^{p/p_{2}}}(\mathbb{G})}\|f\|_{L^{p,\phi}(\mathbb{G})}, \quad 1<p<p_{2}<\infty,
\end{equation}
provided that $W\in L^{p_{2},\phi^{p/p_{2}}}(\mathbb{G})$.
\end{thm}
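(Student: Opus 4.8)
The plan is to reduce the Olsen inequality to the pointwise estimate already established in the course of proving Theorem \ref{I_Gfracopthm}, namely
$$
|I_{\rho, \gamma}f(x)|\leq C\, Mf(x)^{p/q}\,\|f\|_{L^{p,\phi}(\mathbb{G})}^{(q-p)/q},
$$
where $q$ is the exponent appearing in \eqref{I_Gfracopthm1}--\eqref{I_Gfracopthm2} and $p_{2}$ is to be linked to $p$ and $q$ by $\frac{1}{p_{2}}=\frac{1}{p}-\frac{1}{q}$ (so that $1<p<p_{2}<\infty$ follows automatically from $1<p<q<\infty$). Multiplying this estimate by $|W(x)|$ and raising to the power $p$, the task reduces to controlling, for each ball $B(0,r)$, the integral $\int_{B(0,r)}|W(x)|^{p}\,Mf(x)^{p^{2}/q}\,dx$.

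First I would apply H\"older's inequality on $B(0,r)$ with the conjugate exponents $\frac{p_{2}}{p}$ and $\frac{q}{p}$, which are conjugate precisely because $\frac{p}{p_{2}}+\frac{p}{q}=1$. This separates the integral into the product
$$
\left(\int_{B(0,r)}|W(x)|^{p_{2}}\,dx\right)^{p/p_{2}}\left(\int_{B(0,r)}Mf(x)^{p}\,dx\right)^{p/q}.
$$
To the first factor I would apply the definition of the norm on $L^{p_{2},\phi^{p/p_{2}}}(\mathbb{G})$, yielding a bound by $\|W\|_{L^{p_{2},\phi^{p/p_{2}}}(\mathbb{G})}^{p}\,\phi(r)^{p^{2}/p_{2}}\,r^{Qp/p_{2}}$; to the second factor I would use the boundedness of the Hardy--Littlewood maximal operator from Theorem \ref{Nakaithm} (estimate \eqref{Nakai}) together with the definition of the $L^{p,\phi}(\mathbb{G})$ norm, yielding a bound by $C\,\|f\|_{L^{p,\phi}(\mathbb{G})}^{p^{2}/q}\,\phi(r)^{p^{2}/q}\,r^{Qp/q}$.

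The final step is pure bookkeeping of exponents. Using $\frac{1}{p_{2}}+\frac{1}{q}=\frac{1}{p}$ one checks that the powers of $\phi(r)$ combine to $\phi(r)^{p}$ and the powers of $r$ combine to $r^{Q}$, while the surviving powers of $\|f\|_{L^{p,\phi}(\mathbb{G})}$ (the $(q-p)/q$ inherited from the pointwise estimate, raised to the $p$, together with the $p/q$ coming from the maximal term) add up to exactly $\|f\|_{L^{p,\phi}(\mathbb{G})}^{p}$. Dividing by $\phi(r)^{p}r^{Q}$, taking the $p$-th root, and then the supremum over $r>0$ produces \eqref{I_Olsenthm1}. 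I expect no serious analytic obstacle here: the genuine content is the pointwise bound borrowed from Theorem \ref{I_Gfracopthm}, and the only delicacy lies in fixing the exponent relation $\frac{1}{p_{2}}=\frac{1}{p}-\frac{1}{q}$ so that the H\"older split is exact and all the powers of $\phi(r)$, $r$, and $\|f\|_{L^{p,\phi}(\mathbb{G})}$ collapse correctly; it is precisely this exact cancellation that makes the target space on the left-hand side come out as $L^{p,\phi}(\mathbb{G})$ rather than some other $L^{q,\psi}(\mathbb{G})$.
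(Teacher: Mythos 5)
Your proposal is correct, and it is in essence the paper's argument with one citation unfolded. The paper's proof is shorter: it applies H\"older's inequality directly to $\frac{1}{r^{Q}}\int_{B(0,r)}|W\cdot I_{\rho,\gamma}f|^{p}\,dx$ with the conjugate pair $\frac{p_{2}}{p}$ and $\frac{p_{2}}{p_{2}-p}$, obtaining $\|W\cdot I_{\rho,\gamma}f\|_{L^{p,\phi}(\mathbb{G})}\leq C\|W\|_{L^{p_{2},\phi^{p/p_{2}}}(\mathbb{G})}\|I_{\rho,\gamma}f\|_{L^{q,\phi^{p/q}}(\mathbb{G})}$ with $q=\frac{pp_{2}}{p_{2}-p}$, and then invokes Theorem \ref{I_Gfracopthm} (estimate \eqref{I_Gfracopthm3}) as a black box; note that $\frac{p_{2}-p}{p_{2}}=\frac{p}{q}$, so the second factor lands exactly in $L^{q,\phi^{p/q}}(\mathbb{G})$. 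You instead substitute the internal pointwise estimate $|I_{\rho,\gamma}f(x)|\leq C\,Mf(x)^{p/q}\|f\|_{L^{p,\phi}(\mathbb{G})}^{(q-p)/q}$ from the proof of Theorem \ref{I_Gfracopthm} \emph{before} applying H\"older, and then close with the maximal inequality \eqref{Nakai} --- which is precisely how the paper finishes the proof of Theorem \ref{I_Gfracopthm} itself. The two H\"older applications use the same exponent relation $\frac{1}{p_{2}}=\frac{1}{p}-\frac{1}{q}$ (your reading of how $q$ in \eqref{I_Gfracopthm1}--\eqref{I_Gfracopthm2} is tied to $p_{2}$ matches the paper's implicit choice), and your exponent bookkeeping for $\phi(r)$, $r$, and $\|f\|_{L^{p,\phi}(\mathbb{G})}$ checks out. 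What the paper's organization buys is modularity --- a two-step proof reusing a stated theorem; what yours buys is self-containedness at the price of depending on an unstated intermediate bound of another proof (valid here, since your hypotheses are identical, including the surjectivity of $\phi$ needed to choose $R$ with $\phi(R)=Mf(x)\|f\|_{L^{p,\phi}(\mathbb{G})}^{-1}$, and the tacit assumptions that $f\not\equiv 0$ and $Mf<\infty$ a.e.). No gap in either route; the mathematical content is the same.
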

\begin{proof}[Proof of Theorem \ref{I_Olsenthm}] By using H\"{o}lder inequality, we have
\begin{multline*}
\frac{1}{r^{Q}}\int_{B(0,r)}|W\cdot I_{\rho, \gamma}f(x)|^{p}dx \leq \left(\frac{1}{r^{Q}}\int_{B(0,r)}|W(x)|^{p_{2}}dx\right)^{p/p_{2}} \\
\left(\frac{1}{r^{Q}}\int_{B(0,r)}|I_{\rho, \gamma}f(x)|^{\frac{pp_{2}}{p_{2}-p}}dx\right)^{\frac{p_{2}-p}{p_{2}}}.
\end{multline*}
Now let us take the $p$-th roots and then divide both sides by $\phi(r)$ to obtain
\begin{align*}
\frac{1}{\phi(r)}\left(\frac{1}{r^{Q}}\int_{B(0,r)}|W\cdot I_{\rho, \gamma}f(x)|^{p}dx\right)^{1/p} &
\leq \frac{1}{\phi(r)^{p/p_{2}}}\left(\frac{1}{r^{Q}}\int_{B(0,r)}|W(x)|^{p_{2}}dx\right)^{1/p_{2}}\\&
\times \frac{1}{\phi(r)^{\frac{p_{2}-p}{p_{2}}}}\left(\frac{1}{r^{Q}}\int_{B(0,r)}|I_{\rho, \gamma}f(x)|^{\frac{pp_{2}}{p_{2}-p}}dx\right)^{\frac{p_{2}-p}{pp_{2}}}.
\end{align*}
By taking the supremum over $r>0$ and using the inequality \eqref{I_Gfracopthm3}, we get
$$\|W\cdot I_{\rho, \gamma}f\|_{L^{p,\phi}(\mathbb{G})}\leq C_{p, \phi, Q} \|W\|_{L^{p_{2},\phi^{p/p_{2}}}(\mathbb{G})}\|I_{\rho, \gamma}f\|_{L^{\frac{pp_{2}}{p_{2}-p},\phi^{\frac{p_{2}-p}{p_{2}}}}(\mathbb{G})}.$$
Taking into account that $1<p<\frac{pp_{2}}{p_{2}-p}< \infty$ and putting $q=\frac{pp_{2}}{p_{2}-p}$ in \eqref{I_Gfracopthm3}, we obtain \eqref{I_Olsenthm1}.
\end{proof}

\section{Generalised fractional integral operators in generalised Morrey spaces}
\label{SEC:Gfracop}

In this section, we prove the boundedness of the generalised fractional integral operators and establish Olsen type inequality in generalised Morrey spaces on homogeneous groups.

We define the generalised fractional integral operator $T_{\rho}$ by
\begin{equation}\label{T_rho}
T_{\rho}f(x):=\int_{\mathbb{G}}\frac{\rho(|xy^{-1}|)}{|xy^{-1}|^{Q}}f(y)dy,
\end{equation}
where $\rho:\mathbb{R}^{+}\rightarrow\mathbb{R}^{+}$ satisfies the doubling condition \eqref{Pre02} and the condition
\begin{equation}\label{Pre01}
\int_{0}^{1}\frac{\rho(t)}{t}dt<\infty.
\end{equation} 
As in the Abelian case, for $\rho(t)=t^{\alpha}$, $0<\alpha<Q$, we have the Riesz transform
$$T_{\rho}f(x)=I_{\alpha}f(x)=\int_{\mathbb{G}}\frac{1}{|xy^{-1}|^{Q-\alpha}}f(y)dy.$$

\begin{thm}\label{Gfracopthm}
Let $\mathbb{G}$ be a homogeneous group
of homogeneous dimension $Q$. Let $|\cdot|$ be a homogeneous quasi-norm. Let $\rho$ and $\phi$ satisfy the doubling condition \eqref{Pre02}. Let $\phi$ be also surjective and satisfy, for some $1<p<q<\infty$, the inequalities
\begin{equation}\label{Gfracopthm1}
\int_{r}^{\infty}\frac{\phi(t)^{p}}{t}dt\leq C_{1}\phi(r)^{p}, \end{equation}
and
\begin{equation}\label{Gfracopthm2}\phi(r)\int_{0}^{r}\frac{\rho(t)}{t}dt+\int_{r}^{\infty}\frac{\rho(t)\phi(t)}{t}dt\leq C_{2} \phi(r)^{p/q},
\end{equation}
for all $r>0$. Then we have
\begin{equation}\label{Gfracopthm3}
\|T_{\rho}f\|_{L^{q,\phi^{p/q}}(\mathbb{G})}\leq C_{p, q, \phi, Q}\|f\|_{L^{p,\phi}(\mathbb{G})}.
\end{equation}
\end{thm}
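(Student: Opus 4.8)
The plan is to follow the proof of Theorem \ref{I_Gfracopthm} almost verbatim, since $T_{\rho}$ is exactly the formal $\gamma=Q$ analogue of $I_{\rho,\gamma}$ but with the kernel $\rho(|xy^{-1}|)/|xy^{-1}|^{Q}$ in place of $\rho(|xy^{-1}|)/(1+|xy^{-1}|)^{\gamma}$. In particular the denominator here is already the pure power $|xy^{-1}|^{Q}$, so no preliminary estimate of the type $(1+t)^{-\gamma}\le t^{-\gamma}$ is needed. For a fixed $R>0$ I would split
$$T_{\rho}f(x)=I_{1,\rho}(x)+I_{2,\rho}(x),$$
where $I_{1,\rho}$ is the integral over $B(x,R)$ and $I_{2,\rho}$ the integral over $B^{c}(x,R)$, and then estimate the two pieces separately.

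For the near part $I_{1,\rho}$ I would run the dyadic decomposition over the annuli $2^{k}R\le|xy^{-1}|<2^{k+1}R$ with $k\le -1$, apply the doubling condition \eqref{Pre02} to replace $\rho(|xy^{-1}|)$ by $\rho(2^{k}R)$, and bound the local average of $|f|$ by the maximal function $Mf(x)$. The factor $(2^{k}R)^{Q}$ coming from the ball volume cancels the $|xy^{-1}|^{-Q}$ exactly, leaving $\sum_{k\le -1}\rho(2^{k}R)$, which the doubling condition converts into the integral $\int_{0}^{R}\rho(t)/t\,dt$; the first summand of \eqref{Gfracopthm2} then yields $|I_{1,\rho}(x)|\le C\,Mf(x)\,\phi(R)^{(p-q)/q}$. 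For the far part $I_{2,\rho}$ I would again decompose dyadically over $k\ge 0$, use \eqref{Pre02}, apply Hölder's inequality in the exponent $p$, and control the local $L^{p}$ norm of $f$ by $\phi(2^{k+1}R)(2^{k+1}R)^{Q/p}\|f\|_{L^{p,\phi}(\mathbb{G})}$. Telescoping the resulting dyadic sum into $\int_{R}^{\infty}\rho(t)\phi(t)/t\,dt$ and invoking the second summand of \eqref{Gfracopthm2} gives $|I_{2,\rho}(x)|\le C\,\|f\|_{L^{p,\phi}(\mathbb{G})}\,\phi(R)^{p/q}$.

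Combining the two bounds gives
$$|T_{\rho}f(x)|\le C\left(Mf(x)\,\phi(R)^{(p-q)/q}+\|f\|_{L^{p,\phi}(\mathbb{G})}\,\phi(R)^{p/q}\right),$$
and here the surjectivity of $\phi$ lets me choose $R$ with $\phi(R)=Mf(x)\,\|f\|_{L^{p,\phi}(\mathbb{G})}^{-1}$, which balances the two terms into the pointwise bound $|T_{\rho}f(x)|\le C\,Mf(x)^{p/q}\,\|f\|_{L^{p,\phi}(\mathbb{G})}^{(q-p)/q}$. Raising to the power $q$, integrating over $B(0,r)$, dividing by $\phi(r)^{p/q}r^{Q/q}$, taking the supremum over $r>0$, and finally applying the Hardy--Littlewood bound \eqref{Nakai} produces the claimed estimate $\|T_{\rho}f\|_{L^{q,\phi^{p/q}}(\mathbb{G})}\le C\|f\|_{L^{p,\phi}(\mathbb{G})}$. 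The only genuinely delicate points—and the places I would check most carefully—are the two telescoping identities converting the dyadic sums into the integrals $\int_{0}^{R}\rho/t\,dt$ and $\int_{R}^{\infty}\rho\phi/t\,dt$, which rely on the doubling conditions \eqref{Pre02} for both $\rho$ and $\phi$, together with the convergence of the tail sum over $k\ge 0$, guaranteed precisely by the decay encoded in \eqref{Gfracopthm2}. Condition \eqref{Gfracopthm1} enters exactly as in Theorem \ref{I_Gfracopthm}.
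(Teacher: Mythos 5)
Your proposal is correct and follows essentially the same route as the paper's own proof: the same splitting of $T_{\rho}f$ into near and far parts over $B(x,R)$ and $B^{c}(x,R)$, the same dyadic decompositions converted into the integrals $\int_{0}^{R}\rho(t)/t\,dt$ and $\int_{R}^{\infty}\rho(t)\phi(t)/t\,dt$ via doubling, the same choice $\phi(R)=Mf(x)\,\|f\|_{L^{p,\phi}(\mathbb{G})}^{-1}$ using surjectivity, and the final appeal to the maximal function bound \eqref{Nakai}. The telescoping steps you flag as delicate are handled in the paper exactly as you describe, via the lower bound $\int_{2^{k}R}^{2^{k+1}R}\rho(t)t^{-1}\,dt\geq C\rho(2^{k}R)\ln 2$ and its analogue with $\rho\phi$.
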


\begin{proof}[Proof of Theorem \ref{Gfracopthm}] 
For every $R>0$, let us write $T_{\rho}f(x)$ in the form
$$T_{\rho}f(x)=T_{1}(x)+T_{2}(x),$$
where $T_{1}(x):=\int_{B(x,R)}\frac{\rho(|xy^{-1}|)}{(|xy^{-1}|)^{Q}}f(y)dy$ and $T_{2}(x):=\int_{B^{c}(x,R)}\frac{\rho(|xy^{-1}|)}{(|xy^{-1}|)^{Q}}f(y)dy$.
For $T_{1}(x)$, we have
\begin{align*}
|T_{1}(x)|&\leq\int_{|xy^{-1}|<R}\frac{\rho(|xy^{-1}|)}{|xy^{-1}|^{Q}}|f(y)|dy
\\ &= \sum_{k=-\infty}^{-1}\int_{2^{k}R\leq|xy^{-1}|<2^{k+1}R}\frac{\rho(|xy^{-1}|)}{|xy^{-1}|^{Q}}|f(y)|dy.
\end{align*}
By view of \eqref{Pre02}, we get
\begin{align*}
|T_{1}(x)|&\leq C \sum_{k=-\infty}^{-1}\frac{\rho(2^{k}R)}{(2^{k}R)^{Q}}\int_{|xy^{-1}|<2^{k+1}R}|f(y)|dy
\\ & \leq C Mf(x) \sum_{k=-\infty}^{-1}\rho(2^{k}R)
\\ & \leq C Mf(x) \sum_{k=-\infty}^{-1}\int_{2^{k}R}^{2^{k+1}R}\frac{\rho(t)}{t}dt
\\ & = C Mf(x) \int_{0}^{R}\frac{\rho(t)}{t}dt.
\end{align*}
Here we have used the fact that
\begin{equation}\label{doubling2}\int_{2^{k}R}^{2^{k+1}R}\frac{\rho(t)}{t}dt\geq C\rho(2^{k}R)\int_{2^{k}R}^{2^{k+1}R}\frac{1}{t}dt= C\rho(2^{k}R)\ln2.
\end{equation}
Now, using \eqref{Gfracopthm2}, we obtain
\begin{equation}\label{Gfracopt1}
|T_{1}(x)| \leq C Mf(x) \phi(R)^{(p-q)/q}.
\end{equation}
For $T_{2}(x)$, we have
\begin{align*}
|T_{2}(x)|&\leq\int_{|xy^{-1}|\geq R}\frac{\rho(|xy^{-1}|)}{|xy^{-1}|^{Q}}|f(y)|dy
\\ &= \sum_{k=0}^{\infty}\int_{2^{k}R\leq|xy^{-1}|<2^{k+1}R}\frac{\rho(|xy^{-1}|)}{|xy^{-1}|^{Q}}|f(y)|dy.
\end{align*}
Applying \eqref{Pre02}, we get
$$|T_{2}(x)|\leq C\sum_{k=0}^{\infty}\frac{\rho(2^{k}R)}{(2^{k}R)^{Q}}\int_{|xy^{-1}|<2^{k+1}R}|f(y)|dy.$$
From this using the H\"{o}lder inequality, we obtain
\begin{align*}|T_{2}(x)|&\leq C \sum_{k=0}^{\infty}\frac{\rho(2^{k}R)}{(2^{k}R)^{Q}}
\left(\int_{|xy^{-1}|<2^{k+1}R}dy\right)^{1-1/p}
\left(\int_{|xy^{-1}|<2^{k+1}R}|f(y)|dy\right)^{1/p}\\&
\leq C\sum_{k=0}^{\infty}\frac{\rho(2^{k}R)}{(2^{k}R)^{Q/p}}
\left(\int_{|xy^{-1}|<2^{k+1}R}|f(y)|dy\right)^{1/p} \\ &
\leq C \|f\|_{L^{p,\phi}(\mathbb{G})} \sum_{k=0}^{\infty}\rho(2^{k+1}R)\phi(2^{k+1}R)
\\ & \leq C  \|f\|_{L^{p,\phi}(\mathbb{G})} \sum_{k=0}^{\infty}\int_{2^{k}R}^{2^{k+1}R}\frac{\rho(t)\phi(t)}{t}
\\ & = C \|f\|_{L^{p,\phi}(\mathbb{G})} \int_{R}^{\infty}\frac{\rho(t)\phi(t)}{t},
\end{align*}
where we have used the fact that
$$\int_{2^{k}R}^{2^{k+1}R}\frac{\rho(t)\phi(t)}{t}dt\geq C\rho(2^{k+1}R)\phi(2^{k+1}R)\int_{2^{k}R}^{2^{k+1}R}\frac{1}{t}dt= C\rho(2^{k+1}R)\phi(2^{k+1}R)\ln2.$$
Now, in view of \eqref{Gfracopthm2}, we obtain
\begin{equation}\label{Gfracopt2} |T_{2}(x)|\leq C \|f\|_{L^{p,\phi}(\mathbb{G})}\phi(R)^{p/q}.
\end{equation}
Summing the two estimates \eqref{Gfracopt1} and \eqref{Gfracopt2}, we arrive at
$$|T_{\rho}f(x)|\leq C
(Mf(x)\phi(R)^{(p-q)/q}+\|f\|_{L^{p,\phi}(\mathbb{G})}\phi(R)^{p/q}).$$
Assuming that $f$ is not identically $0$ and that $Mf$ is finite everywhere and then using the fact that $\phi$ is surjective, we can choose $R>0$ such that $\phi(R)=Mf(x)\cdot\|f\|_{L^{p,\phi}(\mathbb{G})}^{-1}$. Thus, for every $x\in \mathbb{G}$, we have
$$|T_{\rho}f(x)|\leq C
Mf(x)^{\frac{p}{q}}\|f\|_{L^{p,\phi}(\mathbb{G})}^{\frac{q-p}{q}}.$$
It follows that
$$\left(\int_{B(0,r)}|T_{\rho}f(x)|^{q}\right)^{1/q}\leq C
\left(\int_{B(0,r)}|Mf(x)|^p\right)^{1/q}\|f\|_{L^{p,\phi}(\mathbb{G})}^{\frac{q-p}{q}},$$
then we divide both sides by $\phi(r)^{p/q}r^{Q/q}$ to get
\begin{align*}\frac{1}{\phi(r)^{p/q}}\left(\frac{1}{r^{Q}}\int_{B(0,r)}|T_{\rho}f(x)|^{q}\right)^{1/q}\leq C
\frac{1}{\phi(r)^{p/q}}\left(\frac{1}{r^{Q}}\int_{B(0,r)}|Mf(x)|^p\right)^{1/q}\|f\|_{L^{p,\phi}(\mathbb{G})}^{\frac{q-p}{q}}.
\end{align*}
Taking the supremum over $r>0$ and using the boundedness of the maximal operator $M$ on $L^{p,\phi}(\mathbb{G})$ \eqref{Nakai}, we obtain
$$\|T_{\rho}f\|_{L^{q,\phi^{p/q}}(\mathbb{G})}\leq C_{p, q, \phi, Q}\|f\|_{L^{p,\phi}(\mathbb{G})}.$$
The proof is complete.
\end{proof}

Now let us turn to the Olsen type inequalities for the generalised fractional integral operator $T_{\rho}$ and Bessel-Riesz operator $I_{\alpha,\gamma}$.
\begin{thm}\label{Olsenthm}
Let $\mathbb{G}$ be a homogeneous group
of homogeneous dimension $Q$. Let $|\cdot|$ be a homogeneous quasi-norm. Let $\rho$ and $\phi$ satisfy the doubling condition \eqref{Pre02}. Let $\phi$ be also surjective and satisfy \eqref{Gfracopthm1}-\eqref{Gfracopthm2}. Then we have
\begin{equation}\label{Olsenthm1}\|W\cdot T_{\rho}f\|_{L^{p,\phi}(\mathbb{G})}\leq C_{p, \phi, Q}\|W\|_{L^{p_{2},\phi^{p/p_{2}}}(\mathbb{G})}\|f\|_{L^{p,\phi}(\mathbb{G})}, \quad 1<p<p_{2}<\infty,
\end{equation}
provided that $W\in L^{p_{2},\phi^{p/p_{2}}}(\mathbb{G})$.
\end{thm}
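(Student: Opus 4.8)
The plan is to follow the same strategy as in the proof of Theorem \ref{I_Olsenthm}, the only difference being that the boundedness estimate \eqref{Gfracopthm3} for the generalised fractional integral operator $T_{\rho}$ now replaces \eqref{I_Gfracopthm3}. The idea is to peel off the multiplier $W$ by a single application of H\"older's inequality on each quasi-ball $B(0,r)$, thereby reducing the Olsen estimate to the $L^{p,\phi}(\mathbb{G})\to L^{q,\phi^{p/q}}(\mathbb{G})$ boundedness of $T_{\rho}$ already established in Theorem \ref{Gfracopthm}.

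First I would fix $r>0$ and apply H\"older's inequality on $B(0,r)$ with the conjugate exponents $p_{2}/p$ and $p_{2}/(p_{2}-p)$ to the product $|W\cdot T_{\rho}f|^{p}$, obtaining
$$\frac{1}{r^{Q}}\int_{B(0,r)}|W\cdot T_{\rho}f(x)|^{p}dx\leq\left(\frac{1}{r^{Q}}\int_{B(0,r)}|W(x)|^{p_{2}}dx\right)^{p/p_{2}}\left(\frac{1}{r^{Q}}\int_{B(0,r)}|T_{\rho}f(x)|^{\frac{pp_{2}}{p_{2}-p}}dx\right)^{\frac{p_{2}-p}{p_{2}}}.$$
Taking $p$-th roots and dividing by $\phi(r)$, the exponent on $\phi$ splits precisely as $1=\frac{p}{p_{2}}+\frac{p_{2}-p}{p_{2}}$, so that the right-hand side factors as the product of the $L^{p_{2},\phi^{p/p_{2}}}(\mathbb{G})$-quantity for $W$ and the $L^{q,\phi^{(p_{2}-p)/p_{2}}}(\mathbb{G})$-quantity for $T_{\rho}f$, where $q:=\frac{pp_{2}}{p_{2}-p}$. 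Taking the supremum over $r>0$ then yields
$$\|W\cdot T_{\rho}f\|_{L^{p,\phi}(\mathbb{G})}\leq C_{p,\phi,Q}\|W\|_{L^{p_{2},\phi^{p/p_{2}}}(\mathbb{G})}\|T_{\rho}f\|_{L^{q,\phi^{(p_{2}-p)/p_{2}}}(\mathbb{G})}.$$

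It remains to bound the last factor. The key bookkeeping is that $\frac{1}{q}=\frac{1}{p}-\frac{1}{p_{2}}$, so the hypothesis $1<p<p_{2}<\infty$ is equivalent to $1<p<q<\infty$, and moreover $\frac{p_{2}-p}{p_{2}}=\frac{p}{q}$, whence $\phi^{(p_{2}-p)/p_{2}}=\phi^{p/q}$. Thus the assumptions \eqref{Gfracopthm1}--\eqref{Gfracopthm2} are exactly those of Theorem \ref{Gfracopthm} for this value of $q$, and \eqref{Gfracopthm3} supplies the estimate $\|T_{\rho}f\|_{L^{q,\phi^{p/q}}(\mathbb{G})}\leq C\|f\|_{L^{p,\phi}(\mathbb{G})}$. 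Substituting this into the previous display produces \eqref{Olsenthm1}. I do not expect a genuine obstacle here: the argument is routine once the exponent identifications above are in place, and the only point requiring care is verifying that the auxiliary exponent $q$ arising from H\"older's inequality indeed lands in the admissible range $(p,\infty)$ and matches the secondary Morrey weight $\phi^{p/q}$, which it does precisely because $1<p<p_{2}<\infty$.
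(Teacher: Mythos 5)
Your proposal is correct and coincides with the paper's own proof: the same H\"older application with exponents $p_{2}/p$ and $p_{2}/(p_{2}-p)$ on $B(0,r)$, the same splitting of $\phi(r)$ via $1=\frac{p}{p_{2}}+\frac{p_{2}-p}{p_{2}}$, and the same conclusion by setting $q=\frac{pp_{2}}{p_{2}-p}$ in \eqref{Gfracopthm3}. Your explicit verification that $\frac{p_{2}-p}{p_{2}}=\frac{p}{q}$ and $1<p<q<\infty$ is exactly the bookkeeping the paper performs implicitly.
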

\begin{proof}[Proof of Theorem \ref{Olsenthm}] By using H\"{o}lder inequality, we have
\begin{multline*}
\frac{1}{r^{Q}}\int_{B(0,r)}|W\cdot T_{\rho}f(x)|^{p}dx \\
\leq \left(\frac{1}{r^{Q}}\int_{B(0,r)}|W(x)|^{p_{2}}dx\right)^{p/p_{2}}
\left(\frac{1}{r^{Q}}\int_{B(0,r)}|T_{\rho}f(x)|^{\frac{pp_{2}}{p_{2}-p}}dx\right)^{\frac{p_{2}-p}{p_{2}}}.
\end{multline*}
Now let us take the $p$-th roots and then divide both sides by $\phi(r)$ to obtain
\begin{align*}
\frac{1}{\phi(r)}\left(\frac{1}{r^{Q}}\int_{B(0,r)}|W\cdot T_{\rho}f(x)|^{p}dx\right)^{1/p} &
\leq \frac{1}{\phi(r)^{p/p_{2}}}\left(\frac{1}{r^{Q}}\int_{B(0,r)}|W(x)|^{p_{2}}dx\right)^{1/p_{2}}\\&
\times \frac{1}{\phi(r)^{\frac{p_{2}-p}{p_{2}}}}\left(\frac{1}{r^{Q}}\int_{B(0,r)}|T_{\rho}f(x)|^{\frac{pp_{2}}{p_{2}-p}}dx\right)^{\frac{p_{2}-p}{pp_{2}}}.
\end{align*}
By taking the supremum over $r>0$ and using the inequality \eqref{Gfracopthm3}, we get
$$\|W\cdot T_{\rho}f\|_{L^{p,\phi}(\mathbb{G})}\leq C_{p, \phi, Q} \|W\|_{L^{p_{2},\phi^{p/p_{2}}}(\mathbb{G})}\|T_{\rho}f\|_{L^{\frac{pp_{2}}{p_{2}-p},\phi^{\frac{p_{2}-p}{p_{2}}}}(\mathbb{G})}.$$

Taking into account that $1<p<\frac{pp_{2}}{p_{2}-p}< \infty$ and putting $q=\frac{pp_{2}}{p_{2}-p}$ in \eqref{Gfracopthm3}, we obtain \eqref{Olsenthm1}.
\end{proof}

\begin{thm}\label{Olsenthm2} Let $\mathbb{G}$ be a homogeneous group
of homogeneous dimension $Q$. Let $|\cdot|$ be a homogeneous quasi-norm. Let $\omega:\mathbb{R}^{+}\rightarrow\mathbb{R}^{+}$ satisfy the doubling condition and assume that $\omega(r)\leq Cr^{-\alpha}$ for every $r>0$, so that $K_{\alpha,\gamma}\in L^{p_{2}, \omega}(\mathbb{G})$ for $\frac{Q}{Q+\gamma-\alpha}<p_{2}<\frac{Q}{Q-\alpha}$ and $p_{2}\geq1$, where $0<\alpha<Q$, $1<p<\infty, q=\frac{\beta p}{\beta+Q-\alpha}$ and $\gamma>0$. If $\phi(r)\leq Cr^{\beta}$ for every $r>0$, where $\beta<-\alpha<-Q-\beta$, then we have
\begin{equation}\label{Olsenthm2_1}\|W\cdot I_{\alpha,\gamma}f\|_{L^{p,\phi}(\mathbb{G})}\leq C_{p, \phi, Q}\|W\|_{L^{p_{2},\phi^{p/p_{2}}}(\mathbb{G})}\|f\|_{L^{p,\phi}(\mathbb{G})},
\end{equation}
provided that $W\in L^{p_{2},\phi^{p/p_{2}}}(\mathbb{G})$, where $\frac{1}{p_{2}}=\frac{1}{p}-\frac{1}{q}$.
\end{thm}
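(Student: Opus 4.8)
The plan is to reproduce, \emph{mutatis mutandis}, the Hölder-splitting argument used in the proof of Theorem \ref{Olsenthm}, with the single change that the boundedness of $T_\rho$ is replaced by the Bessel-Riesz estimate of Theorem \ref{BRth3}. First I would fix $r>0$ and apply Hölder's inequality on the ball $B(0,r)$ to the product $|W(x)|^{p}|I_{\alpha,\gamma}f(x)|^{p}$, splitting with the conjugate exponents $p_{2}/p$ and $p_{2}/(p_{2}-p)$. This yields
\begin{multline*}
\frac{1}{r^{Q}}\int_{B(0,r)}|W\cdot I_{\alpha,\gamma}f(x)|^{p}dx \\
\leq \left(\frac{1}{r^{Q}}\int_{B(0,r)}|W(x)|^{p_{2}}dx\right)^{p/p_{2}}
\left(\frac{1}{r^{Q}}\int_{B(0,r)}|I_{\alpha,\gamma}f(x)|^{\frac{pp_{2}}{p_{2}-p}}dx\right)^{\frac{p_{2}-p}{p_{2}}}.
\end{multline*}

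Next I would take $p$-th roots and divide by $\phi(r)$, writing $\phi(r)=\phi(r)^{p/p_{2}}\cdot\phi(r)^{(p_{2}-p)/p_{2}}$ so that the right-hand side factors as a local $L^{p_{2}}$ quantity weighted by $\phi^{p/p_{2}}$ times a local $L^{q}$ quantity weighted by $\phi^{(p_{2}-p)/p_{2}}$, where $q=\frac{pp_{2}}{p_{2}-p}$. Taking the supremum over $r>0$ then produces
$$\|W\cdot I_{\alpha,\gamma}f\|_{L^{p,\phi}(\mathbb{G})}\leq C_{p,\phi,Q}\|W\|_{L^{p_{2},\phi^{p/p_{2}}}(\mathbb{G})}\|I_{\alpha,\gamma}f\|_{L^{q,\phi^{(p_{2}-p)/p_{2}}}(\mathbb{G})}.$$

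The key bookkeeping step is to note that the hypothesis $\frac{1}{p_{2}}=\frac{1}{p}-\frac{1}{q}$ forces $q=\frac{pp_{2}}{p_{2}-p}$ and, equivalently, $\frac{p_{2}-p}{p_{2}}=\frac{p}{q}$; hence the weight $\phi^{(p_{2}-p)/p_{2}}$ appearing above is exactly $\phi^{p/q}=\psi$, the target weight of Theorem \ref{BRth3}. I would then invoke Theorem \ref{BRth3}, whose hypotheses ($\phi(r)\leq Cr^{\beta}$ with $\beta<-\alpha<-Q-\beta$, $\omega$ doubling with $\omega(r)\leq Cr^{-\alpha}$, and $K_{\alpha,\gamma}\in L^{p_{2},\omega}(\mathbb{G})$ with $q=\frac{\beta p}{\beta+Q-\alpha}$) coincide exactly with those assumed here, to obtain
$$\|I_{\alpha,\gamma}f\|_{L^{q,\phi^{p/q}}(\mathbb{G})}\leq C\|K_{\alpha,\gamma}\|_{L^{p_{2},\omega}(\mathbb{G})}\|f\|_{L^{p,\phi}(\mathbb{G})}.$$
Combining the last two displays yields \eqref{Olsenthm2_1}, the finite kernel norm $\|K_{\alpha,\gamma}\|_{L^{p_{2},\omega}(\mathbb{G})}$ being absorbed into the constant.

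I do not expect a substantial obstacle, since the argument is structurally identical to Theorem \ref{Olsenthm}; the only point requiring genuine care is the dual role played by the index $p_{2}$. It must simultaneously be the Hölder exponent determined by $\frac{1}{p_{2}}=\frac{1}{p}-\frac{1}{q}$ and lie in the admissible range $\left(\frac{Q}{Q+\gamma-\alpha},\frac{Q}{Q-\alpha}\right)$ that guarantees $K_{\alpha,\gamma}\in L^{p_{2},\omega}(\mathbb{G})$ through Lemma \ref{BRlem1}. Before applying Theorem \ref{BRth3} I would therefore check that these two constraints on $p_{2}$ are compatible under the standing assumptions $q=\frac{\beta p}{\beta+Q-\alpha}$ and $\beta<-\alpha<-Q-\beta$; once this compatibility is confirmed, the remainder is the routine Hölder computation outlined above.
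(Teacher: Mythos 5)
Your proposal is correct and follows essentially the same route as the paper: the paper's proof likewise applies H\"older's inequality with $\frac{p}{p_{2}}+\frac{p}{q}=1$ on $B(0,r)$, divides by $\phi(r)=\phi(r)^{p/p_{2}}\cdot\phi(r)^{p/q}$, takes the supremum over $r>0$, and then invokes Theorem \ref{BRth3} with $\psi(r)=\phi(r)^{p/q}$. The compatibility check you flag at the end is not an extra step: both constraints on $p_{2}$ (the H\"older relation $\frac{1}{p_{2}}=\frac{1}{p}-\frac{1}{q}$ and the admissible range $\frac{Q}{Q+\gamma-\alpha}<p_{2}<\frac{Q}{Q-\alpha}$) are assumed simultaneously in the statement of Theorem \ref{Olsenthm2}, so the paper simply works under both hypotheses rather than deriving one from the other.
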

\begin{proof}[Proof of Theorem \ref{Olsenthm2}] As in Theorem \ref{Olsenthm}, by using H\"{o}lder inequality for $\frac{p}{p_{2}}+\frac{p}{q}=1$, we have
$$\frac{1}{r^{Q}}\int_{B(0,r)}|W\cdot I_{\alpha,\gamma}f(x)|^{p}dx \leq \left(\frac{1}{r^{Q}}\int_{B(0,r)}|W(x)|^{p_{2}}dx\right)^{p/p_{2}}
\left(\frac{1}{r^{Q}}\int_{B(0,r)}|I_{\alpha,\gamma}f(x)|^{q}dx\right)^{p/q}.$$
Now we take the $p$-th roots and then divide both sides by $\phi(r)$ to get
\begin{align*}
\frac{1}{\phi(r)}\left(\frac{1}{r^{Q}}\int_{B(0,r)}|W\cdot I_{\alpha,\gamma}f(x)|^{p}dx\right)^{1/p} &
\leq \frac{1}{\phi(r)^{p/p_{2}}}\left(\frac{1}{r^{Q}}\int_{B(0,r)}|W(x)|^{p_{2}}dx\right)^{1/p_{2}}\\&
\times \frac{1}{\phi(r)^{p/q}}\left(\frac{1}{r^{Q}}\int_{B(0,r)}|I_{\alpha,\gamma}f(x)|^{q}dx\right)^{1/q}.
\end{align*}
By taking the supremum over $r>0$, we have
$$\|W\cdot I_{\alpha,\gamma}f\|_{L^{p,\phi}(\mathbb{G})}\leq C \|W\|_{L^{p_{2},\phi^{p/p_{2}}}(\mathbb{G})}\|I_{\alpha,\gamma}f\|_{L^{q,\phi^{p/q}}(\mathbb{G})},$$
which implies \eqref{Olsenthm2_1} in view of Theorem \ref{BRth3} after putting $\psi(r)=\phi(r)^{p/q}$.
\end{proof}

\section{Inequalities for the modified version of generalised fractional integral operator in Campanato spaces}
\label{SEC:Campanato}

In this section, we prove the boundedness of the modified version of the operator $T_{\rho}$ in Campanato spaces on homogeneous groups.

We define the generalised Campanato space by
\begin{equation}\label{GCampanato}\mathcal{L}^{p,\phi}(\mathbb{G}):=\{f\in L^{p}_{loc}(\mathbb{G}):\|f\|_{\mathcal{L}^{p,\phi}(\mathbb{G})}<\infty\},
\end{equation}
where
$$\|f\|_{\mathcal{L}^{p,\phi}(\mathbb{G})}:=\sup_{r>0}\frac{1}{\phi(r)}\left(\frac{1}{r^{Q}}\int_{B(0,r)}|f(x)-f_{B}|^{p}dx\right)^{1/p},$$
with $f_B=f_{B(0,r)}:=\frac{1}{r^{Q}}\int_{B(0,r)}f(y)dy$, and we assume that $\frac{\phi(r)}{r}$ is nonincreasing.

Next, for the function $\rho:\mathbb{R}^{+}\rightarrow\mathbb{R}^{+}$, we define the modified version of the generalised fractional integral operator $T_{\rho}$ by
\begin{equation}\label{ModT}\widetilde{T_{\rho}}f(x):=\int_{\mathbb{G}}\left(\frac{\rho(|xy^{-1}|)}{|xy^{-1}|^{Q}}
-\frac{\rho(|y|)(1-\chi_{B(0,1)}(y))}{|y|^{Q}}\right)f(y)dy,
\end{equation}
where $B(0,1):=\{x\in \mathbb{G}: |x|<1\}$ and $\chi_{B(0,1)}$ is the characteristic function of $B(0,1)$. In this definition, we assume that $\rho$ satisfies \eqref{Pre01}, \eqref{Pre02} and the following conditions:
\begin{equation}\label{Pre03}
\int_{r}^{\infty}\frac{\rho(t)}{t^{2}}dt\leq C_{1}\frac{\rho(r)}{r} \;\;{\rm for \;\;all}\;\; r>0;
\end{equation}
\begin{equation}\label{Pre04}
\frac{1}{2}\leq \frac{r}{s}\leq 2 \Rightarrow \left|\frac{\rho(r)}{r^{Q}}-\frac{\rho(s)}{s^{Q}}\right|\leq C_{2}|r-s|\frac{\rho(s)}{s^{Q+1}}.
\end{equation}
For instance, the function $\rho(r)=r^{\alpha}$ satisfies \eqref{Pre01}, \eqref{Pre02} and \eqref{Pre04} for $0<\alpha<Q$, and also satisfies \eqref{Pre03} for $0<\alpha<1$.
\begin{thm}\label{Campanatothm}
Let $\mathbb{G}$ be a homogeneous group
of homogeneous dimension $Q$. Let $|\cdot|$ be a homogeneous quasi-norm. Let $\rho$ satisfy \eqref{Pre01}, \eqref{Pre02}, \eqref{Pre03}, \eqref{Pre04}, and let $\phi$ satisfy the doubling condition \eqref{Pre02} and $\int_{1}^{\infty}\frac{\phi(t)}{t}dt<\infty$. If
\begin{equation}\label{Campanatothm1}\int_{r}^{\infty}\frac{\phi(t)}{t}dt\int_{0}^{r}\frac{\rho(t)}{t}dt+r\int_{r}^{\infty}\frac{\rho(t)\phi(t)}{t^{2}}dt\leq C_{3} \psi(r) \;{\it for \;\;all} \;r>0,
\end{equation}
then we have
\begin{equation}\label{Campanato3}
\|\widetilde{T}_{\rho}f\|_{\mathcal{L}^{p,\psi}(\mathbb{G})}\leq C_{p, \phi, Q}\|f\|_{\mathcal{L}^{p,\phi}(\mathbb{G})}, \;\;1<p<\infty.
\end{equation}
\end{thm}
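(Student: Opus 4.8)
The plan is to fix a quasi-ball $B=B(0,r)$ and bound the mean oscillation $\inf_{c}\left(\frac{1}{r^{Q}}\int_{B(0,r)}|\widetilde{T}_{\rho}f(x)-c|^{p}dx\right)^{1/p}$, which by a standard argument is comparable (up to a factor $2$) to the quantity defining the $\mathcal{L}^{p,\psi}$-seminorm in \eqref{GCampanato}; once I show it is $\le C\psi(r)\|f\|_{\mathcal{L}^{p,\phi}(\mathbb{G})}$ uniformly in $r$, dividing by $\psi(r)$ and taking the supremum gives \eqref{Campanato3}. The crucial structural fact, built into the definition \eqref{ModT}, is that $\widetilde{T}_{\rho}$ maps constants to constants: by the right-invariance of the Haar measure and $|y^{-1}|=|y|$, the integral $\int_{\mathbb{G}}\big(\frac{\rho(|xy^{-1}|)}{|xy^{-1}|^{Q}}-\frac{\rho(|y|)(1-\chi_{B(0,1)}(y))}{|y|^{Q}}\big)dy$ converges (using \eqref{Pre01} near the origin and \eqref{Pre03} at infinity) and is independent of $x$. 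Consequently the oscillation of $\widetilde{T}_{\rho}f$ over $B$ equals that of $\widetilde{T}_{\rho}(f-f_{2B})$, where $f_{2B}$ is the average of $f$ over $2B:=B(0,2r)$, so I may normalise $f_{2B}=0$ and split $f=f\chi_{2B}+f\chi_{(2B)^{c}}=:g_{1}+g_{2}$.

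For $g_{1}$, the tail term in \eqref{ModT} integrates against $g_{1}$ to a constant in $x$, which drops out of the oscillation, so only $\int_{2B}\frac{\rho(|xy^{-1}|)}{|xy^{-1}|^{Q}}g_{1}(y)dy$ matters. For $x\in B$ and $y\in 2B$ one has $|xy^{-1}|\le Cr$, so this is the convolution of $g_{1}$ with the truncated kernel $\frac{\rho(|z|)}{|z|^{Q}}\chi_{\{|z|<Cr\}}$, whose $L^{1}$-norm is $|\sigma|\int_{0}^{Cr}\frac{\rho(t)}{t}dt<\infty$ by \eqref{EQ:polar} and \eqref{Pre01}. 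Young's inequality (Proposition \ref{Young}) then gives $L^{p}(B)$-control by $C\int_{0}^{Cr}\frac{\rho(t)}{t}dt\,\|g_{1}\|_{L^{p}(\mathbb{G})}$, and since $f_{2B}=0$ the Campanato norm yields $\|g_{1}\|_{L^{p}(\mathbb{G})}\le Cr^{Q/p}\phi(2r)\|f\|_{\mathcal{L}^{p,\phi}(\mathbb{G})}$. After the doubling \eqref{Pre02}, the near contribution to the numerator is at most $Cr^{Q/p}\phi(r)\int_{0}^{r}\frac{\rho(t)}{t}dt\,\|f\|_{\mathcal{L}^{p,\phi}(\mathbb{G})}$.

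For $g_{2}$, the tail term again contributes only an $x$-independent constant; subtracting in addition the well-defined value $\widetilde{T}_{\rho}g_{2}(0)$ makes the remaining tail pieces cancel exactly and leaves the absolutely convergent difference $\int_{(2B)^{c}}\big(\frac{\rho(|xy^{-1}|)}{|xy^{-1}|^{Q}}-\frac{\rho(|y|)}{|y|^{Q}}\big)f(y)dy$. For $x\in B$ and $|y|\ge 2r$ we have $\tfrac12\le|xy^{-1}|/|y|\le 2$ and $\big|\,|xy^{-1}|-|y|\,\big|\le C|x|\le Cr$, so \eqref{Pre04} gives the pointwise bound $\lesssim \frac{r\,\rho(|y|)}{|y|^{Q+1}}$ on the kernel difference. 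Decomposing $(2B)^{c}$ dyadically, using \eqref{Pre02} to freeze $\rho$ at $\rho(2^{k}r)$, bounding $\int_{\{2^{k}r\le|y|<2^{k+1}r\}}|f|dy$ by Hölder together with the telescoping estimate $|f_{2^{k+1}B}|\le C\sum_{j=1}^{k}\phi(2^{j}r)\|f\|_{\mathcal{L}^{p,\phi}(\mathbb{G})}$, and interchanging the order of summation, I obtain
$$\big|\widetilde{T}_{\rho}g_{2}(x)-\widetilde{T}_{\rho}g_{2}(0)\big|\le Cr\|f\|_{\mathcal{L}^{p,\phi}(\mathbb{G})}\sum_{j\ge 1}\phi(2^{j}r)\sum_{k\ge j}\frac{\rho(2^{k}r)}{2^{k}r}.$$
The inner sum is dominated via \eqref{Pre03} by $C\int_{2^{j}r}^{\infty}\frac{\rho(t)}{t^{2}}dt\le C\frac{\rho(2^{j}r)}{2^{j}r}$, so the far contribution collapses to $Cr\int_{r}^{\infty}\frac{\rho(t)\phi(t)}{t^{2}}dt\,\|f\|_{\mathcal{L}^{p,\phi}(\mathbb{G})}$; here $\int_{1}^{\infty}\frac{\phi(t)}{t}dt<\infty$ is what guarantees the telescoping sums converge.

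Adding the two contributions and dividing by $\psi(r)$, it remains to note that the first term on the left of \eqref{Campanatothm1} dominates $\phi(r)\int_{0}^{r}\frac{\rho(t)}{t}dt$ (since $\int_{r}^{\infty}\frac{\phi(t)}{t}dt\ge c\,\phi(r)$ by \eqref{Pre02}) while the second term is exactly $r\int_{r}^{\infty}\frac{\rho(t)\phi(t)}{t^{2}}dt$; hence \eqref{Campanatothm1} gives the bound $\le C_{p,\phi,Q}\psi(r)\|f\|_{\mathcal{L}^{p,\phi}(\mathbb{G})}$ and, taking the supremum over $r$, the estimate \eqref{Campanato3}. I expect the main obstacle to be the far part: one must first justify rigorously that $\widetilde{T}_{\rho}$ is well defined on $\mathcal{L}^{p,\phi}(\mathbb{G})$ and sends constants to constants — precisely the role of the tail subtraction in \eqref{ModT} together with the convergence supplied by \eqref{Pre01} and \eqref{Pre03} — and then establish the kernel-difference estimate, which hinges on \eqref{Pre04} and on the quasi-norm inequality $\big|\,|xy^{-1}|-|y|\,\big|\le C|x|$ (and on checking the scale condition $\tfrac12\le|xy^{-1}|/|y|\le2$ under which \eqref{Pre04} applies) for the arbitrary homogeneous quasi-norm $|\cdot|$. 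The remaining work — the telescoping of the averages $f_{2^{j}B}$ and the interchange of summation — is routine once \eqref{Pre03} and $\int_{1}^{\infty}\frac{\phi(t)}{t}dt<\infty$ are in hand.
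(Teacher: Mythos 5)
Your proposal is correct, and for the far part and for the constant terms it coincides with the paper's proof: the paper also splits at scale $2r$, writing $\widetilde{T}_{\rho}f=\widetilde{T}^{1}_{B(0,r)}+\widetilde{T}^{2}_{B(0,r)}+C^{1}_{B(0,r)}+C^{2}_{B(0,r)}$, and your subtraction of $\widetilde{T}_{\rho}g_{2}(0)$ is exactly its comparison of $\rho(|xy^{-1}|)/|xy^{-1}|^{Q}$ with $\rho(|y|)/|y|^{Q}$, estimated just as you do via \eqref{Pre04}, the bound $\bigl||xy^{-1}|-|y|\bigr|\lesssim|x|$, telescoping of the averages, an interchange of summation, and \eqref{Pre03}. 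The genuine divergence is the near part. The paper bounds $\widetilde{T}^{1}_{B(0,r)}(x)\lesssim M\widetilde{f}(x)\int_{0}^{r}\rho(t)t^{-1}dt$, applies the maximal inequality \eqref{HLmax2}, and then, to match the factor $\widetilde{\phi}(r)=\int_{r}^{\infty}\phi(t)t^{-1}dt$ appearing in \eqref{Campanatothm1}, introduces $\sigma(f)=\lim_{r\to\infty}f_{B(0,r)}$ and the auxiliary estimates \eqref{EGN1}--\eqref{EGN2}. You avoid all of this: Young's inequality with the truncated kernel gives the same $L^{p}(B)$ control, the bound $\|(f-f_{B(0,2r)})\chi_{B(0,2r)}\|_{L^{p}(\mathbb{G})}\le C r^{Q/p}\phi(2r)\|f\|_{\mathcal{L}^{p,\phi}(\mathbb{G})}$ is immediate from the definition \eqref{GCampanato}, and your observation that doubling yields $\widetilde{\phi}(r)\ge c\,\phi(r)$ lets \eqref{Campanatothm1} absorb $\phi(r)\int_{0}^{r}\rho(t)t^{-1}dt$. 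This is a real simplification, dispensing with the maximal function, with $\sigma(f)$, and with \eqref{EGN1}--\eqref{EGN2}; your only misattribution is that $\int_{1}^{\infty}\phi(t)t^{-1}dt<\infty$ is needed to make $\widetilde{\phi}$ finite (so that \eqref{Campanatothm1} is nonvacuous), not for your telescoping sums, which are finite sums controlled by \eqref{Pre03}. Two points you rightly flag do require the paper's care. First, that $\widetilde{T}_{\rho}$ maps constants to constants does not follow from invariance and symmetry alone, since $\int_{\mathbb{G}}\bigl(\rho(|xy^{-1}|)|xy^{-1}|^{-Q}-\rho(|y|)(1-\chi_{B(0,1)}(y))|y|^{-Q}\bigr)dy$ converges only improperly; the paper proves it by splitting over $B(x,R)$, a transition annulus, and $B^{c}(x,R+r)$ (its terms $A_{1},A_{2},A_{3}$) and letting $R\to\infty$ using \eqref{Pre02}--\eqref{Pre04}, obtaining the $x$-independent value $\int_{B(0,1)}\rho(|y|)|y|^{-Q}dy$. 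Second, for a general quasi-norm the ratio $|xy^{-1}|/|y|$ on the far region lies in $[A^{-1},A]$ with $A$ depending on the quasi-norm constant rather than in $[\frac12,2]$, so \eqref{Pre04} must be iterated finitely many times, together with the doubling of $\rho$ (the paper is equally silent on this). With the constants-to-constants lemma inserted, your argument is complete and yields \eqref{Campanato3}.
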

\begin{proof}[Proof of Theorem \ref{Campanatothm}] For every $x\in B(0,r)$ and $f\in \mathcal{L}^{p,\phi}(\mathbb{G})$, let us write $\widetilde{T}_{\rho}f$ in the following form:
$$\widetilde{T}_{\rho}f(x)=\widetilde{T}_{B(0,r)}(x)+C^{1}_{B(0,r)}+C^{2}_{B(0,r)}=\widetilde{T}^{1}_{B(0,r)}(x)+\widetilde{T}^{2}_{B(0,r)}(x)
+C^{1}_{B(0,r)}+C^{2}_{B(0,r)},$$
where
$$\widetilde{T}_{B(0,r)}(x):=\int_{\mathbb{G}}(f(y)-f_{B(0,2r)})\left(\frac{\rho(|xy^{-1}|)}{|xy^{-1}|^{Q}}-
\frac{\rho(|y|)(1-\chi_{B(0,2r)}(y))}{|y|^{Q}}\right)dy,$$
$$C^{1}_{B(0,r)}:=\int_{\mathbb{G}}(f(y)-f_{B(0,2r)})\left(\frac{\rho(|y|)(1-\chi_{B(0,2r)}(y))}{|y|^{Q}}-
\frac{\rho(|y|)(1-\chi_{B(0,1)}(y))}{|y|^{Q}}\right)dy,$$
$$C^{2}_{B(0,r)}:=\int_{\mathbb{G}}f_{B(0,2r)}\left(\frac{\rho(|xy^{-1}|)}{|xy^{-1}|^{Q}}
-\frac{\rho(|y|)(1-\chi_{B(0,1)}(y))}{|y|^{Q}}\right)dy,$$
$$\widetilde{T}^{1}_{B(0,r)}(x):=\int_{B(0,2r)}(f(y)-f_{B(0,2r)})\frac{\rho(|xy^{-1}|)}{|xy^{-1}|^{Q}}dy,$$
$$\widetilde{T}^{2}_{B(0,r)}(x):=\int_{B^{c}(0,2r)}(f(y)-f_{B(0,2r)})\left(\frac{\rho(|xy^{-1}|)}{|xy^{-1}|^{Q}}-
\frac{\rho(|y|)}{|y|^{Q}}\right)dy.$$
Since
\begin{multline*}
\left|\frac{\rho(|y|)(1-\chi_{B(0,2r)}(y))}{|y|^{Q}}-
\frac{\rho(|y|)(1-\chi_{B(0,1)}(y))}{|y|^{Q}}\right|\;
\\ \leq
\begin{cases} 0, |y|< \min(1,2r) \;{\rm or}\; |y|\geq \max(1,2r);\\
\frac{\rho(|y|)}{|y|^{Q}}=const, \;{\rm otherwise}, \end{cases}
\end{multline*}
$C^{1}_{B(0,r)}$ is finite.

Now let us show that $C^{2}_{B(0,r)}$ is finite. For this it is enough to prove that the following integral is finite:
$$\int_{\mathbb{G}}\left(\frac{\rho(|xy^{-1}|)}{|xy^{-1}|^{Q}}
-\frac{\rho(|y|)(1-\chi_{B(0,1)}(y))}{|y|^{Q}}\right)dy$$
$$=\int_{\mathbb{G}}\left(\frac{\rho(|xy^{-1}|)}{|xy^{-1}|^{Q}}
-\frac{\rho(|y|)}{|y|^{Q}}\right)dy+\int_{B(0,1)}\frac{\rho(|y|)}{|y|^{Q}}dy.$$
Let us denote $A:=\int_{\mathbb{G}}\left(\frac{\rho(|xy^{-1}|)}{|xy^{-1}|^{Q}}
-\frac{\rho(|y|)}{|y|^{Q}}\right)dy$. For large $R>0$, we write $A$ in the form
$$A=A_{1}+A_{2}+A_{3},$$
where
$$A_{1}=\int_{B(x,R)}\frac{\rho(|xy^{-1}|)}{|xy^{-1}|^{Q}}dy-\int_{B(0,R)}\frac{\rho(|y|)}{|y|^{Q}}dy,$$
$$A_{2}=\int_{B(x,R+r)\backslash B(x,R)}\frac{\rho(|xy^{-1}|)}{|xy^{-1}|^{Q}}dy-\int_{B(x,R+r)\backslash B(0,R)}\frac{\rho(|y|)}{|y|^{Q}}dy,$$
$$A_{3}=\int_{B^{c}(x,R+r)}\left(\frac{\rho(|xy^{-1}|)}{|xy^{-1}|^{Q}}-\frac{\rho(|y|)}{|y|^{Q}}\right)dy.$$
Since we have $\int_{0}^{1}\frac{\rho(t)}{t}dt<+\infty$, it implies that
$$\frac{\rho(|xy^{-1}|)}{|xy^{-1}|^{Q}}, \frac{\rho(|y|)}{|y|^{Q}} \in L^{1}_{loc}(\mathbb{G}),$$
and hence $A_{1}=0.$ By \eqref{Pre04} we have
$$A_{3}\leq\int_{B^{c}(x,R+r)}\left|\frac{\rho(|xy^{-1}|)}{|xy^{-1}|^{Q}}-\frac{\rho(|y|)}{|y|^{Q}}\right|dy$$
$$\leq C\int_{B^{c}(x,R+r)}||xy^{-1}|-|y||\frac{\rho(|xy^{-1}|)}{|xy^{-1}|^{Q+1}}dy.$$
By using the triangle inequality (see e.g. \cite[Theorem 3.1.39, p.113]{FR}) and symmetric property of homogeneous quasi-norms, we get
$$A_{3}\leq C||x|+|y^{-1}|-|y||\int_{R+r}^{+\infty}\int_{\wp}\frac{\rho(t)}{t^{Q+1}}t^{Q-1}d\sigma(y)dt$$
$$\leq C|\sigma|r\int_{R+r}^{+\infty}\frac{\rho(t)}{t^{2}}dt.$$
The inequality \eqref{Pre03} implies that the last integral is integrable and $|A_{3}|\rightarrow0$ as $R\rightarrow +\infty$.
For $A_{2}$, we have
$$|A_{2}|\leq \int_{B(x,R+r)\backslash B(x,R-r)}\left(\frac{\rho(|xy^{-1}|)}{|xy^{-1}|^{Q}}+\frac{\rho(|y|)}{|y|^{Q}}\right)dy$$
$$\sim((R+r)^{Q}-(R-r)^{Q})\frac{\rho(R)}{R^{Q}}\leq C r \frac{\rho(R)}{R},$$
and taking into account the conditions \eqref{Pre02} and \eqref{Pre03}, we obtain
$$|A_{2}|\leq C r \frac{\rho(R)}{R}\rightarrow 0 \;\;{\rm as} \;\;R\rightarrow+\infty.$$
Since $A\rightarrow0$ as $R\rightarrow +\infty$, we have $A=0$ and hence
$$\int_{\mathbb{G}}\left(\frac{\rho(|xy^{-1}|)}{|xy^{-1}|^{Q}}
-\frac{\rho(|y|)(1-\chi_{B(0,1)}(y))}{|y|^{Q}}\right)dy=\int_{B(0,1)}\frac{\rho(|y|)}{|y|^{Q}}dy<\infty,$$
which implies that $C^{2}_{B(0,r)}$ is finite.

Now before estimating $\widetilde{T}^{1}_{B(0,r)}$, let us denote $\widetilde{f}:=(f-f_{B(0,2r)})\chi_{B(0,2r)}$ and $\widetilde{\phi}(r):=\int_{r}^{\infty}\frac{\phi(t)}{t}dt$. Then, we have
\begin{align*}
|\widetilde{T}^{1}_{B(0,r)}(x)|&\leq\int_{B(0,2r)}|\widetilde{f}(y)|\frac{\rho(|xy^{-1}|)}{|xy^{-1}|^{Q}}dy
\\ &= \sum_{k=-\infty}^{0}\int_{2^{k}r\leq|xy^{-1}|<2^{k+1}r}\frac{\rho(|xy^{-1}|)}{|xy^{-1}|^{Q}}|\widetilde{f}(y)|dy.
\end{align*}
By using \eqref{Pre02} and \eqref{doubling2}, we get
\begin{align*}
|\widetilde{T}^{1}_{B(0,r)}(x)|&\leq C \sum_{k=-\infty}^{0}\frac{\rho(2^{k}r)}{(2^{k}r)^{Q}}\int_{|xy^{-1}|<2^{k+1}r}|\widetilde{f}(y)|dy
\\ & \leq C M\widetilde{f}(x) \sum_{k=-\infty}^{0}\rho(2^{k}r)
\\ & \leq C M\widetilde{f}(x) \sum_{k=-\infty}^{0}\rho(2^{k-1}r)
\\ & \leq C M\widetilde{f}(x) \sum_{k=-\infty}^{0}\int_{2^{k-1}r}^{2^{k}r}\frac{\rho(t)}{t}dt
\\ & = C M\widetilde{f}(x) \int_{0}^{r}\frac{\rho(t)}{t}dt.
\end{align*}
Now using \eqref{Campanatothm1}, we have
$$|\widetilde{T}^{1}_{B(0,r)}(x)|\leq C\frac{\psi(r)}{\widetilde{\phi}(r)}M\widetilde{f}(x).$$
It follows that
$$\frac{1}{\psi(r)}\left(\frac{1}{r^{Q}}\int_{B(0,r)}|\widetilde{T}^{1}_{B(0,r)}(x)|^{p}dx\right)^{1/p}\leq
C\frac{1}{\widetilde{\phi}(r)r^{Q/p}}\left(\int_{B(0,r)}|M\widetilde{f}(x)|^{p}dx\right)^{1/p}$$
$$\leq
C\frac{1}{\widetilde{\phi}(r)r^{Q/p}}\|\widetilde{f}\|_{L^{p}(\mathbb{G})},$$
where we used \eqref{HLmax2}.

By Minkowski inequality, we have
$$\frac{1}{\widetilde{\phi}(r)r^{Q/p}}\|\widetilde{f}\|_{L^{p}(\mathbb{G})}=
\frac{1}{\widetilde{\phi}(r)r^{Q/p}}\|(f-f_{B(0,2r)})\chi_{B(0,2r)}\|_{L^{p}(\mathbb{G})}$$
$$\leq C\frac{1}{\widetilde{\phi}(r)r^{Q/p}}
(\|(f-\sigma(f))_{\chi_{B(0,2r)}}\|_{L^{p}(\mathbb{G})}+(2r)^{Q/p}|f_{B(0,2r)}-\sigma(f)|),$$
where $\sigma(f)=\underset{r\rightarrow \infty}{\rm lim}f_{B(0,r)}$.

We obtain the following inequalities exactly in the same way as in the Abelian case (see \cite{EGN04}, Section 6)
\begin{equation}\label{EGN1}
\|f-\sigma(f)\|_{L^{p,\widetilde{\phi}}(\mathbb{G})}\leq C_{1}\|f\|_{\mathcal{L}^{p,\phi}(\mathbb{G})},
\end{equation}
and
\begin{equation}\label{EGN2}
|f_{B(0,r)}-\sigma(f)|\leq C_{2}\|f\|_{\mathcal{L}^{p,\phi}(\mathbb{G})}\widetilde{\phi}(r).
\end{equation}
Finally, using these inequalities we get our estimate for $\widetilde{T}^{1}_{B(0,r)}$ as
\begin{equation}\label{Campanato1}|\widetilde{T}^{1}_{B(0,r)}(x)|\leq C\|f\|_{\mathcal{L}^{p,\phi}(\mathbb{G})}.
\end{equation}
Now let us estimate $\widetilde{T}^{2}_{B(0,r)}$. By \eqref{Pre02} and \eqref{Pre04}, we have
$$|\widetilde{T}^{2}_{B(0,r)}(x)|\leq \int_{B^{c}(0,2r)}|f(y)-f_{B(0,2r)}|\left|\frac{\rho(|xy^{-1}|)}{|xy^{-1}|^{Q}}-\frac{\rho(|y|)}{|y|^{Q}}\right|dy$$
$$\leq C ||xy^{-1}|-|y||\int_{|y|\geq 2r}|f(y)-f_{B(0,2r)}|\frac{\rho(|y|)}{|y|^{Q+1}}dy.$$
By using the triangle inequality (see e.g. \cite[Theorem 3.1.39, p.113]{FR}) and symmetric property of homogeneous quasi-norms, we get
$$|\widetilde{T}^{2}_{B(0,r)}(x)|\leq  C ||x|+|y^{-1}|-|y||\int_{|y|\geq 2r}|f(y)-f_{B(0,2r)}|\frac{\rho(|y|)}{|y|^{Q+1}}dy$$
$$\leq C |x|\int_{|y|\geq 2r}|f(y)-f_{B(0,2r)}|\frac{\rho(|y|)}{|y|^{Q+1}}dy$$
$$=C |x|\sum_{k=2}^{\infty}\int_{2^{k-1}r\leq|y|< 2^{k}r}\frac{\rho(|y|)|f(y)-f_{B(0,2r)}|}{|y|^{Q+1}}dy.$$
By using \eqref{Pre02} and H\"{o}lder inequality, we have
$$|\widetilde{T}^{2}_{B(0,r)}(x)|\leq C|x|\sum_{k=2}^{\infty}\frac{\rho(2^{k}r)}{(2^{k}r)^{Q+1}}\int_{|y|< 2^{k}r}|f(y)-f_{B(0,2r)}|dy$$
$$\leq C|x|\sum_{k=2}^{\infty}\frac{\rho(2^{k}r)}{2^{k}r}\left(\frac{1}{(2^{k}r)^{Q}}\int_{|y|< 2^{k}r}|f(y)-f_{B(0,2r)}|^{p}dy\right)^{1/p}.$$
As in the Abelian case (\cite{EGN04}), we have
$$\left(\frac{1}{(2^{k}r)^{Q}}\int_{B(0,2^{k}r)}|f(y)-f_{B(0,2r)}|^{p}dy\right)^{1/p}\leq C\|f\|_{\mathcal{L}^{p,\phi}(\mathbb{G})}\int_{2r}^{2^{k+1}r}\frac{\phi(s)}{s}ds,$$
for every $k\geq2$. The inequality \eqref{doubling2} implies that
$$\int^{2^{k+1}r}_{2^{k}r}\frac{\rho(t)}{t^{2}}dt\geq \frac{1}{2^{k+1}r}\int^{2^{k+1}r}_{2^{k}r}\frac{\rho(t)}{t}dt\geq C\frac{\rho(2^{k}r)}{2^{k}r}.$$
By using the last two inequalities, we get
$$|\widetilde{T}^{2}_{B(0,r)}(x)|\leq C |x| \|f\|_{\mathcal{L}^{p,\phi}(\mathbb{G})} \sum_{k=2}^{\infty}\frac{\rho(2^{k}r)}{2^{k}r}\int_{2r}^{2^{k+1}r}\frac{\phi(s)}{s}ds$$
$$\leq C |x| \|f\|_{\mathcal{L}^{p,\phi}(\mathbb{G})} \sum_{k=2}^{\infty}\int^{2^{k+1}r}_{2^{k}r}\frac{\rho(t)}{t^{2}}\left(\int_{2r}^{t}\frac{\phi(s)}{s}ds\right)dt $$
$$\leq C |x| \|f\|_{\mathcal{L}^{p,\phi}(\mathbb{G})} \int^{\infty}_{2r}\frac{\rho(t)}{t^{2}}
\left(\int_{2r}^{t}\frac{\phi(s)}{s}ds\right)dt$$
$$=C |x| \|f\|_{\mathcal{L}^{p,\phi}(\mathbb{G})} \int^{\infty}_{2r}
\left(\int_{s}^{\infty}\frac{\rho(t)}{t^{2}}dt\right)\frac{\phi(s)}{s}ds.$$
Using \eqref{Pre03} and then \eqref{Campanatothm1}, it implies that
$$|\widetilde{T}^{2}_{B(0,r)}(x)|\leq C r \|f\|_{\mathcal{L}^{p,\phi}(\mathbb{G})} \int^{\infty}_{2r}\frac{\rho(s)\phi(s)}{s^{2}}ds\leq C \psi(r) \|f\|_{\mathcal{L}^{p,\phi}(\mathbb{G})}.$$
It follows that
\begin{equation}\label{Campanato2}\frac{1}{\psi(r)}\left(\frac{1}{r^{Q}}\int_{B(0,r)}|\widetilde{T}^{2}_{B(0,r)}(x)|^{p}dx\right)^{1/p}\leq
C \|f\|_{\mathcal{L}^{p,\phi}(\mathbb{G})}.
\end{equation}
Summing the estimates \eqref{Campanato1} and \eqref{Campanato2}, we obtain \eqref{Campanato3}.
\end{proof}

\end{document}